\newcommand{\ud}{\mathrm{d}}
\newcommand{\cH}{\mathcal{H}}
 \newtheorem{thm}{Theorem}[section]
 \newtheorem{cor}[thm]{Corollary}
 \newtheorem{lem}[thm]{Lemma}
 \newtheorem{prop}[thm]{Proposition}
 \theoremstyle{definition}
 \newtheorem{defn}[thm]{Definition}
 \newtheorem{rem}[thm]{Remark}
 \newtheorem{ex}{Example}
 \numberwithin{equation}{section}
\begin{document}

\title[Krylov solvability of unbounded inverse linear problems]
 {Krylov solvability of unbounded \\ inverse linear problems}

%----------Author 1
\author{No\`e Angelo Caruso}

\address{%
International School for Advanced Studies\\
via Bonomea 265, I-34136 Trieste (ITALY)\\
and Gran Sasso Science Institute\\
Viale F.~Crispi 7, I-67100 L'Aquila (ITALY)}

\email{noe.caruso@gssi.it}

%%----------Author 2
\author{Alessandro Michelangeli}
\address{Institute for Applied Mathematics \\
and Hausdorff Center for Mathematics\\
University of Bonn\\
Endenicher Allee 60, D-53115 Bonn (GERMANY)}
\email{michelangeli@iam.uni-bonn.de}
%----------classification, keywords, date
\subjclass[2010]{41A65; 46N40; 47B25; 47N40}

% 41A65 Abstract approximation theory (approximation in normed linear spaces and other abstract spaces)
% 46N40 Applications of functional analysis in numerical analysis
% 47B25 Linear symmetric and selfadjoint operators (unbounded)
% 47N40 Applications of operator theory in numerical analysis 

\keywords{Inverse linear problems, conjugate gradient methods, unbounded operators on Hilbert space, self-adjoint operators, Krylov subspaces, Krylov solution}

\date{\today}
%----------additions
\makeatletter{\renewcommand*{\@makefnmark}{}
\footnotetext{Integral Equations and Operator Theory.}\makeatother}
%%% ----------------------------------------------------------------------

\begin{abstract}
 The abstract issue of `Krylov solvability' is extensively discussed for the inverse problem $Af=g$ where $A$ is a (possibly unbounded) linear operator on an infinite-dimensional Hilbert space, and $g$ is a datum in the range of $A$. The question consists of whether the solution $f$ can be approximated in the Hilbert norm by finite linear combinations of $g,Ag,A^2g,\dots$, and whether solutions of this sort exist and are unique. After revisiting the known picture when $A$ is bounded, we study the general case of a densely defined and closed $A$. Intrinsic operator-theoretic mechanisms are identified that guarantee or prevent Krylov solvability, with new features arising due to the unboundedness. Such mechanisms are checked in the self-adjoint case, where Krylov solvability is also proved by conjugate-gradient-based techniques. 
\end{abstract}

%%% ----------------------------------------------------------------------
\maketitle
%%% ----------------------------------------------------------------------
%\tableofcontents

\section{Introduction and set-up of the problem}\label{intro}

The question of `\emph{Krylov solvability}' of an inverse linear problem is an operator-theoretic question, with deep-rooted implications in numerics and scientific computing among others, that in fairly abstract terms is formulated as follows.

A linear operator $A$ acting on a real or complex Hilbert space $\cH$, and a vector $g\in\cH$ are given such that $A$ is closed and densely or everywhere defined on $\cH$, and $g$ is an $A$-smooth vector in the range of $A$, i.e.,
\begin{equation}\label{eq:assumption-g}
 g\;\in\;\mathrm{ran} A\cap C^\infty(A)
\end{equation}
where $C^\infty(A)$ is the space of elements of $\cH$ simultaneously belonging to all the domains of the natural powers of $A$,
\begin{equation}\label{eq:gsmooth}
 %6f^{[0]}
 C^\infty(A)\;:=\;\bigcap_{k\in\mathbb{N}}\mathcal{D}(A^k)\,.
\end{equation}
Clearly $A$-smoothness is an automatic condition if $A$ is bounded. Associated with $A$ and $g$ one has the `\emph{Krylov subspace}'
\begin{equation}\label{eq:defKrylov}
  \mathcal{K}(A,g) \;:=\;  \mathrm{span} \{ A^kg\,|\,k\in\mathbb{N}_0\}\,,
\end{equation}
as well as the inverse linear problem induced by $A$ with datum $g$, namely the problem of finding solution(s) $f\in\mathcal{D}(A)$ such that
\begin{equation}\label{eq:invLP}
 Af\;=\;g\,.
\end{equation}
The problem \eqref{eq:invLP} is said to be `\emph{Krylov-solvable}' if for some solution $f$ one has 
\begin{equation}
 f\;\in\;\overline{\mathcal{K}(A,g)}\,,
\end{equation}
in which case $f$ is also referred to as a `\emph{Krylov solution}'.

In short, Krylov solvability for the problem \eqref{eq:invLP} is the possibility of having solution(s) $f$ for which there are approximants, in the Hilbert norm, consisting of finite linear combinations of vectors $g,Ag,A^2g,A^3g,\dots$.

This explains the deep conceptual relevance of Krylov solvability in scientific computing: knowing a priori whether or not an inverse problem is Krylov-solvable allows one to decide whether to treat the problem numerically by means of one of the vast class of so-called Krylov-subspace methods \cite{Saad-2003_IterativeMethods,Liesen-Strakos-2003}, searching for approximants to the exact solution(s) over the span of \emph{explicit} trial vectors $g,Ag,A^2g,A^3g,\dots$.

In fact, Krylov subspace methods are efficient numerical schemes for finite-dimensional inverse linear problems, even counted among the `Top 10 Algorithms' of the 20th century \cite{Dongarra-Sullivan-Best10-2000,Cipra-SIAM-News}, a framework that is by now classical and deeply understood (see, e.g., the monographs \cite{Saad-2003_IterativeMethods,Liesen-Strakos-2003} or also \cite{Saad-1981}), and they are naturally exported to the infinite-dimensional case ($\dim\cH=\infty$), although the latter is less systematically studied and is better understood through special sub-classes of interest \cite{Karush-1952,Daniel-1967,Kammerer-Nashed-1972,Nemirovskiy-Polyak-1985,Winther-1980,Herzog-Ekkehard-2015,CMN-2018_Krylov-solvability-bdd,CM-Nemi-unbdd-2019}. 
Of course we refer here to the circumstance  when \eqref{eq:invLP} is genuinely infinite-dimensional, meaning that not only $\dim\cH=\infty$, but also (see, e.g., \cite[Sect.~1.4]{schmu_unbdd_sa}) that $A$ is \emph{not} reduced to $A=A_1\oplus A_2$ by an orthogonal direct sum decomposition $\cH=\cH_1\oplus\cH_2$ with $\dim\cH_1<\infty$, $\dim\cH_2=\infty$, and $A_2=\mathbb{O}$ (for otherwise the effective problem would deal with a finite matrix $A_1$).

Clearly, Krylov solvability is non-trivial whenever $\mathcal{K}(A,g)$ admits a non-trivial orthogonal complement in $\cH$.

Thus, for example, for the (everywhere defined and bounded) multiplication operator $M:L^2[1,2]\to L^2[1,2]$, $\psi\mapsto x\psi$, and for the function $g=\mathbf{1}$ (the constant function with value 1), $\mathcal{K}(M,\mathbf{1})$ is the space of polynomials on $[1,2]$, hence it is \emph{dense} in $L^2[1,2]$: the solution to $Mf=\mathbf{1}$, which is explicitly $f(x)=x^{-1}$, obviously belongs to $\overline{\mathcal{K}(M,\mathbf{1})}$. On the other hand, for the (everywhere defined and bounded) right-shift operator $R$ on $\ell^2(\mathbb{Z})$ defined with respect to the canonical orthonormal basis $(e_n)_{n\in\mathbb{Z}}$ by $e_n\mapsto e_{n+1}$, and for the vector $g=e_{1}$, one has $\overline{\mathcal{K}(R,e_{1})}=\mathrm{span}\{e_0,e_{-1},e_{-2},\dots\}^\perp$: the problem $Rf=e_1$ is solved by $f=e_0$ which does not belong to $\overline{\mathcal{K}(R,e_{1})}$.

If the operator $A$ is non-injective and hence the inverse problem \eqref{eq:invLP} admits a multiplicity of solutions, it may also well happen that some are Krylov solutions and others are not (Example \eqref{example:Krylov_sol}(iv) below).

These considerations suggest that the general issue of Krylov solvability can be posed from various specific perspectives, such as:
\begin{itemize}
 \item given \emph{explicit} $A$ and $g$, to decide about the existence or the uniqueness of a Krylov solution to the inverse problem \eqref{eq:invLP};
 \item to identify \emph{classes of operators} $A$ that induce Krylov-solvable problems  \eqref{eq:invLP} irrespective of choice of the datum $g$ (as long as $g$ satisfies the basic condition \eqref{eq:assumption-g});
 \item to qualify `intrinsic' mechanisms of Krylov solvability through general conditions on $A$ and $g$.
\end{itemize}

At present the above conceptual programme appears to be only partially developed.

Given the iterative nature of Krylov subspace methods, the largest part of the related literature is mainly concerned with the fundamental issue of \emph{convergence} (an ample overview of which can be found, for instance, in the monographs \cite{Nevanlinna-1993-converg-iterat-book,Engl-Hanke-Neubauer-1996,Saad-2003_IterativeMethods}) of the Krylov approximants to a solution $f$. It is clear, however, as shown by the above simple example on the inverse problem $Rf=e_1$, that the question of \emph{Krylov solvability} is equally fundamental to decide when to attack an inverse problem by means of computational methods that make use of Krylov approximants.

Motivated by the implications in numerical analysis as well as by abstract operator-theoretical interest, in the recent work \cite{CMN-2018_Krylov-solvability-bdd} in collaboration with P.~Novati we discussed the question of Krylov solvability when in the inverse problem \eqref{eq:invLP} $\dim\cH=\infty$ and $A$ is \emph{bounded}, with a special focus on normal and, in particular, on self-adjoint operators. Certain operator-theoretic mechanisms were identified (that we called `Krylov reducibility', and `triviality of the Krylov intersection', among others) which account for how, at a `structural' level, Krylov solvability occurs and when the Krylov solution is unique. Section \ref{sec:bounded} of the present work reviews those findings that are relevant for the subsequent discussion.

Along a parallel route, in \cite{CM-Nemi-unbdd-2019} we studied the convergence of a popular Krylov subspace algorithm for the inverse problem \eqref{eq:invLP}, the so-called method of conjugate gradient, in the generalised setting when $A$ is \emph{unbounded}. In view of the conceptual programme above, \cite{CM-Nemi-unbdd-2019} can be regarded as a first step to study the Krylov solvability of \eqref{eq:invLP} in the unbounded case -- and section \ref{sec:cg} here accounts for that perspective -- however with the two-fold limitation that $A$ has to be self-adjoint and non-negative (as required in conjugate gradient methods), and that Krylov solvability emerges only as a by-product result with no explicit insight on the operator-theoretic mechanism for it.

The present work aims at pushing our programme further by discussing Krylov solvability for a fairly general class of unbounded $A$s and with a focus on the same structural mechanisms previously identified in the bounded case.

In section \ref{sec:selfadj_skewadj} we present our first result that answers (in the affirmative) the question of Krylov solvability when $A$ is generically (unbounded and) self-adjoint or skew-adjoint.

Then in section \ref{sec:general_unbdd} we proceed on to the general case when $A$ is densely defined and closed on $\cH$. Here we identify new obstructions in the issue of Krylov solvability, which are not present in the bounded case. A most serious one is the somewhat counterintuitive phenomenon of `Krylov escape', namely the possibility that vectors of $\overline{\mathcal{K}(A,g)}$ that also belong to the domain of $A$ are mapped by $A$ \emph{outside} of $\overline{\mathcal{K}(A,g)}$, whereas obviously $A\mathcal{K}(A,g)\subset\mathcal{K}(A,g)$. From a perspective that in fact we are not carrying over here, one might observe that the possibility of Krylov escape adds further complication to the unbounded operator counterpart of the celebrated invariant subspace problem \cite{Yadav2005_InvSubsp}, at least when $g\neq 0$ and $g$ is not a cyclic vector for $A$, hence $\overline{\mathcal{K}(A,g)}$ is a proper closed subspace of $\cH$.

In section \ref{sec:general_unbdd} we also determine that if the closures of $\mathcal{K}(A,g)$ in the Hilbert space norm and in the stronger $A$-graph norm are the same (up to intersection with $\mathcal{D}(A)$), an occurrence that we named `Krylov-core condition', then Krylov escape is actually prevented.

This leads us to section \ref{sec:genunbdd}, where we generalise to the case of densely defined and closed $A$s the previously known picture of Krylov solvability when $A$ was bounded. In particular, we demonstrate that under assumptions like the Krylov-core condition (and, more generally, lack of Krylov escape) the intrinsic mechanisms of Krylov reducibility and triviality of the Krylov intersection play a completely analogous role as compared to the bounded case.

Last, in section \ref{sec:selfadj_revisited} we re-consider the (unbounded) self-adjoint scenario, that from the practical point of view is already solved in section \ref{sec:selfadj_skewadj}, investigating Krylov solvability from the perspective of the abstract operator-theoretic mechanisms mentioned above. Noticeably, this is also a perspective that rises up interesting open questions. Indeed, whereas we can prove that self-adjoint operators do satisfy the Krylov-core condition and are Krylov-reducible for a distinguished dense set of $A$-smooth vectors $g$'s, and that for the same choice of $g$ the subspace $\overline{\mathcal{K}(A,g)}$ is naturally isomorphic to $L^2(\mathbb{R},\ud\mu_g^{(A)})$ (here $\mu_g^{(A)}$ is the scalar spectral measure), we cannot decide whether Krylov escape is prevented for \emph{any} self-adjoint $A$ and $A$-smooth $g$ (which is remarkable, as by other means we know that $Af=g$ is Krylov solvable at least for a dense set of smooth vectors $g$). This certainly indicates a future direction of investigation.

\medskip

\textbf{General notation.} Besides further notation that will be declared in due time, we shall keep the following convention. $\cH$ denotes a complex Hilbert space with norm $\|\cdot\|_{\cH}$ and scalar product $\langle\cdot,\cdot\rangle$, anti-linear in the first entry and linear in the second. Bounded operators on $\cH$ shall be tacitly understood as linear and everywhere defined: they naturally form a space, denoted with $\mathcal{B}(\cH)$, with Banach norm $\|\cdot\|_{\mathrm{op}}$, the customary operator norm.
$\mathbbm{1}$ and $\mathbbm{O}$ denote, respectively, the identity and the zero operator, meant as finite matrices or infinite-dimensional operators depending on the context. An upper bar denotes the complex conjugate $\overline{z}$ when $z\in\mathbb{C}$, and the norm closure $\overline{\mathcal{V}}$ of the span of the vectors in $\mathcal{V}$ when $\mathcal{V}$ is a subset of $\cH$. For $\psi,\varphi\in\cH$, by $|\psi\rangle\langle\psi|$ and $|\psi\rangle\langle\varphi|$ we shall denote the $\cH\to\cH$ rank-one maps acting respectively as $f\mapsto \langle \psi, f\rangle\,\psi$ and $f\mapsto \langle \varphi, f\rangle\,\psi$ on generic $f\in\cH$. For identities such as $\psi(x)=\varphi(x)$ in $L^2$-spaces, the standard `for almost every $x$' declaration will be tacitly understood.

\section{The bounded case}\label{sec:bounded}

Krylov solvability of the inverse problem \eqref{eq:invLP} when $\dim\cH=\infty$, when $A$ is (everywhere defined, and) bounded and $g\in\mathrm{ran} A$, appears to manifest or to fail to hold in a variety of situations.

\begin{ex}\label{example:Krylov_sol}~
\begin{itemize}
 \item[(i)] The multiplication operator $M_z:L^2(\Omega)\to L^2(\Omega)$, $f\mapsto zf$, where $\Omega\subset\mathbb{C}$ is a bounded open subset separated from the origin, say, $\Omega=\{z\in\mathbb{C}\,|\,|z-2|<1\}$, is a normal bounded bijection on $L^2(\Omega)$, and the solution to $M_zf=g$ for given $g\in L^2(\Omega)$ is the function $f(z)=z^{-1}g(z)$.  Moreover, $\mathcal{K}(M_z,g)=\{p\,g\,|\,p\textrm{ a polynomial in $z$ on }\Omega\}$. One can see that $f\in\overline{\mathcal{K}(M_z,g)}$ and hence the problem $M_zf=g$ is Krylov-solvable. Indeed, $\Omega\ni z\mapsto z^{-1}$ is holomorphic and hence is realised by a uniformly convergent power series (e.g., the Taylor expansion of $z^{-1}$ around $z=2$). If $(p_n)_n$ is such a sequence of polynomial approximants, then
 \[
  \begin{split}
   \|f-p_n g\|_{L^2(\Omega)}\;&=\;\|(z^{-1}-p_n)g\|_{L^2(\Omega)} \\
   &\leqslant\;\|z^{-1}-p_n\|_{L^\infty(\Omega)}\|g\|_{L^2(\Omega)}\;\xrightarrow[]{n\to\infty}\;0\,.
  \end{split}
 \]
 \item[(ii)] The left-shift operator $L$ on $\ell^2(\mathbb{N}_0)$, defined as usual on the canonical basis $(e_n)_{n\in\mathbb{N}_0}$ by $Le_{n+1}=e_n$, is bounded, not injective, and with range $\mathrm{ran}L=\ell^2(\mathbb{N}_0)$. A solution to $Lf=g$ with $g:=\sum_{n\in\mathbb{N}_0}\frac{1}{n!}e_n$ is $f=\sum_{n\in\mathbb{N}_0}\frac{1}{n!}e_{n+1}$. Moreover, $\mathcal{K}(L,g)$ is \emph{dense} in $\ell^2(\mathbb{N}_0)$ and therefore $f$ is a Krylov solution. To see the density of $\mathcal{K}(L,g)$: the vector $e_0$ belongs to $\overline{\mathcal{K}(L,g)}$ because
 \[
  \begin{split}
   \|k!\, L^k g-e_0\|_{\ell^2}^2\;&=\;\|\textstyle(1,\frac{1}{k+1},\frac{1}{(k+2)(k+1)},\cdots)-(1,0,0,\dots)\|_{\ell^2}^2 \\
   &=\;\sum_{n=1}^\infty\Big(\frac{k!}{(n+k)!}\Big)^2\;\xrightarrow[]{\;k\to\infty\;}\;0\,.
  \end{split}
 \]
 As a consequence, $ (0,\textstyle\frac{1}{k!},\frac{1}{(k+1)!},\frac{1}{(k+2)!},\cdots)=L^{k-1}g-(k-1)!\,e_0\in\overline{\mathcal{K}(L,g)}$,
%  \[
%   (0,\textstyle\frac{1}{k!},\frac{1}{(k+1)!},\frac{1}{(k+2)!},\cdots)\;=\;L^{k-1}g-(k-1)!\,e_0\;\in\; \overline{\mathcal{K}_g(L)}
%  \]
therefore the vector $e_1$ too belongs to $\overline{\mathcal{K}(L,g)}$, because
\[
 \|k!\,(L^{k-1}g-(k-1)!\,e_0)-e_1\|_{\ell^2}^2\;=\;\sum_{n=1}^\infty\Big(\frac{k!}{(n+k)!}\Big)^2\;\xrightarrow[]{\;k\to\infty\;}\;0\,.
\]
 Repeating inductively the above two-step argument proves that any $e_n\in\overline{\mathcal{K}(L,g)}$, whence the cyclicity of $g$. 
  \item[(iii)] The right-shift operator on $\ell^2(\mathbb{Z})$,  
  \begin{equation}\label{eq:compactRshift-Z}
 \mathcal{R}\;=\;\sum_{n\in\mathbb{Z}}|e_{n+1}\rangle\langle e_n|\,,
\end{equation}
is a normal, bounded bijection, and the solution to $\mathcal{R}f = e_2$ is $f=e_1$. However, $f$ is not a Krylov solution, for $\overline{\mathcal{K}(\mathcal{R},e_2)}=\overline{\mathrm{span}\{e_2,e_3,\dots\}}$. The problem $\mathcal{R}f=e_2$ is not Krylov-solvable.
  \item[(iv)] Let $A$ be a bounded injective operator on a Hilbert space $\cH$ with cyclic vector $g\in\mathrm{ran}A$ and let $\varphi_0\in\cH\setminus\{0\}$. Let $f\in\cH$ be the solution to $Af=g$. The operator $\widetilde{A}:=A\oplus |\varphi_0\rangle\langle\varphi_0|$ on $\widetilde{\cH}:=\cH\oplus\cH$ is bounded. One solution to $\widetilde{A}\widetilde{f}=\widetilde{g}:=g\oplus 0$ is $\widetilde{f}=f\oplus 0$ and $\widetilde{f}\in\cH\oplus\{0\}=\overline{\mathcal{K}(\widetilde{A},\widetilde{g})}$. Another solution is $\widetilde{f}_\xi=f\oplus\xi$, where $\xi\in\cH\setminus\{0\}$ and $\xi\perp\varphi_0$. Clearly, $\widetilde{f}_\xi\notin\overline{\mathcal{K}(\widetilde{A},\widetilde{g})}$.
  \item[(v)] If $V$ is the Volterra operator on $L^2[0, 1]$ and $g(x) = \frac{1}{2}x^2$, then $f(x) = x$ is the unique solution to $Vf=g$. On the other hand, $\mathcal{K}(V,g)$ is spanned by the monomials $x^2,x^3,x^4,\dots$, whence 
  \[
  \mathcal{K}(V,g) \;=\; \{x^2p(x)\,|\, p\textrm{ is a polynomial on } [0,1]\}\,. 
  \]
  Therefore $f\notin \mathcal{K}(V,g)$, because $f(x)=x^2\cdot\frac{1}{x}$ and $\frac{1}{x}\notin L^2[0,1]$. Yet, $f\in \overline{\mathcal{K}(V,g)}$, because in fact $\mathcal{K}(V,g)$ is \emph{dense} in $L^2[0,1]$.
  Indeed, if $h\in\mathcal{K}(V,g)^\perp$, then $0=\int_0^1\overline{h(x)}\,x^2p(x)\,\ud x$ for any polynomial $p$; the $L^2$-density of polynomials on $[0,1]$ implies necessarily that $x^2h=0$, whence also $h=0$; this proves that $\mathcal{K}(V,g)^\perp=\{0\}$ and hence $\overline{\mathcal{K}(V,g)}=L^2[0,1]$.
\end{itemize}
\end{ex}

Example \ref{example:Krylov_sol}(iii) shows, in particular, that even apparently stringent assumptions on $A$ such as the simultaneous occurrence of normality, injectivity, and bounded everywhere defined inverse do \emph{not} ensure, in general, that the solution $f$ to $Af=g$, for given $g\in\cH$, is a Krylov solution.

A partial yet fairly informative comprehension of the general bounded scenario was recently reached in the work \cite{CMN-2018_Krylov-solvability-bdd}, where it was shown that Krylov solvability is intrinsically related with certain operator-theoretic mechanisms that we briefly review here.

\begin{defn}\label{def:Kreduc-Kintersec-bounded}
 For a given Hilbert space $\cH$ let $A\in\mathcal{B}(\cH)$ and $g\in\mathrm{ran} A$.
 \begin{itemize}
  \item[(i)] The orthogonal decomposition
  \begin{equation}\label{eq:Krylov_decomposition}
 \cH\;=\;\overline{\mathcal{K}(A,g)}\,\oplus\,\mathcal{K}(A,g)^\perp
\end{equation}
is called the \emph{Krylov decomposition} of $\cH$ relative to $A$ and $g$.
  \item[(ii)] An operator $T\in\mathcal{B}(\cH)$ is said to be $\mathcal{K}(A,g)$-\emph{reduced} when both $\overline{\mathcal{K}(A,g)}$ and $\mathcal{K}(A,g)^\perp$ are invariant under $T$. Such a requirement, since $T$ is bounded, is equivalent to $T=T|_{\overline{\mathcal{K}(A,g)}}\oplus T|_{\mathcal{K}(A,g)^\perp}$, namely $T$ is reduced with respect to the Krylov decomposition \eqref{eq:Krylov_decomposition} \cite[Prop.~1.15]{schmu_unbdd_sa}.
  \item[(iii)] The subspace
  \begin{equation}\label{eq:Krilov_intersection}
   \mathcal{I}(A,g)\;:=\;\overline{\mathcal{K}(A,g)}\,\cap\, (A\,\mathcal{K}(A,g)^\perp) 
  \end{equation}
 is called the \emph{Krylov intersection} for the given $A$ and $g$. 
 \end{itemize}
\end{defn}

Krylov reducibility is inspired by the straightforward observation that
\begin{equation}\label{eq:invariances}
 A\,\mathcal{K}(A,g)\,\subset\,\mathcal{K}(A,g)\,,\qquad A^*\,\mathcal{K}(A,g)^\perp\,\subset\,\mathcal{K}(A,g)^\perp\,,
\end{equation}
whence also
\begin{equation}\label{eq:invariances2}
 A\,\overline{\mathcal{K}(A,g)}\,\subset\,\overline{\mathcal{K}(A,g)}\,.
\end{equation}
Thus, $\overline{\mathcal{K}(A,g)}$ is always $A$-invariant, and when so too is $\mathcal{K}(A,g)^\perp$ one says that $A$ is Krylov-reducible. Evidently, any bounded \emph{self-adjoint} operator $A$ is $\mathcal{K}(A,g)$-reduced and it is easy to construct non-self-adjoint  $A$'s that are $\mathcal{K}(A,g)$-reduced as well \cite[Example 2.3]{CMN-2018_Krylov-solvability-bdd}.

It is also clear that if $A$ is $\mathcal{K}(A,g)$-reduced, then in particular the Krylov intersection is trivial: $\mathcal{I}(A,g)=\{0\}$. That the converse is not true in general is easily seen already at the finite-dimensional level (with obvious infinite-dimensional generalisation) in \cite[Example 2.5]{CMN-2018_Krylov-solvability-bdd}.
%by considering, for example, 
%\[
% A_\theta\,=\,\begin{pmatrix}
%  1 & \cos\theta \\
%  0 & \sin\theta
% \end{pmatrix}\qquad\theta\in(0,{\textstyle\frac{\pi}{2}}]\,,\qquad g\,=\,
% \begin{pmatrix}
%  1 \\ 0
% \end{pmatrix}
%\]
%(with respect to the Hilbert space $\mathbb{C}^2$): $A_\theta$ is $\mathcal{K}(A_\theta,g)$-reduced only when $\theta=\frac{\pi}{2}$, whereas the Krylov intersection \eqref{eq:Krilov_intersection} is trivial for any $\theta\in(0,\frac{\pi}{2}]$.

A significant property is the following.

\begin{prop}\label{prop:KrylIntTriv_KrylSol} $($\cite[Prop.~3.4]{CMN-2018_Krylov-solvability-bdd}$.)$
  For a given Hilbert space $\cH$ let $A\in\mathcal{B}(\cH)$ be injective and $g\in\mathrm{ran} A$. Let $f\in\cH$ satisfy $Af=g$.
 \begin{itemize}
  \item[(i)] If $\mathcal{I}(A,g)=\{0\}$, then $f\in\overline{\mathcal{K}(A,g)}$.
  \item[(ii)] Assume further that $A$ is invertible with everywhere defined, bounded inverse on $\cH$. Then $f\in\overline{\mathcal{K}(A,g)}$ if and only if $\mathcal{I}(A,g)=\{0\}$.
 \end{itemize}
\end{prop}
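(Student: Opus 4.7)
The plan is to exploit the fundamental invariance $A\overline{\mathcal{K}(A,g)}\subset\overline{\mathcal{K}(A,g)}$ recorded in \eqref{eq:invariances2}, combined with the Krylov decomposition \eqref{eq:Krylov_decomposition}, to pin down where $f$ and $A^{-1}v$ live. The key observation is that $\mathcal{I}(A,g)$ is precisely the obstruction measuring whether the Krylov-orthogonal part of a preimage of an element of $\overline{\mathcal{K}(A,g)}$ can be nonzero.

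For part (i), I would write $f=f_0+f_1$ according to the orthogonal decomposition \eqref{eq:Krylov_decomposition}, with $f_0\in\overline{\mathcal{K}(A,g)}$ and $f_1\in\mathcal{K}(A,g)^\perp$. Applying $A$ gives $Af_0+Af_1=g$. Since $g\in\mathcal{K}(A,g)\subset\overline{\mathcal{K}(A,g)}$ and $Af_0\in\overline{\mathcal{K}(A,g)}$ by \eqref{eq:invariances2}, one concludes $Af_1=g-Af_0\in\overline{\mathcal{K}(A,g)}$. On the other hand $Af_1\in A\,\mathcal{K}(A,g)^\perp$ tautologically, so $Af_1\in\mathcal{I}(A,g)=\{0\}$. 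Injectivity of $A$ forces $f_1=0$, i.e.\ $f=f_0\in\overline{\mathcal{K}(A,g)}$.

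For part (ii), the nontrivial implication is $f\in\overline{\mathcal{K}(A,g)}\Rightarrow\mathcal{I}(A,g)=\{0\}$; the reverse is (i). My approach is to prove the auxiliary invariance
\[
 A^{-1}\overline{\mathcal{K}(A,g)}\;\subset\;\overline{\mathcal{K}(A,g)}\,,
\]
which is where the assumption that $A^{-1}\in\mathcal{B}(\cH)$ is used. For a polynomial $p(x)=\sum_{k=0}^N a_k x^k$ one has $A^{-1}p(A)g=a_0 f+\sum_{k\geqslant 1}a_k A^{k-1}g$, and each term lies in $\overline{\mathcal{K}(A,g)}$ because $f$ does by hypothesis; density of $\mathcal{K}(A,g)$ in $\overline{\mathcal{K}(A,g)}$ combined with continuity of $A^{-1}$ extends this to the whole closure. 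Now pick any $v\in\mathcal{I}(A,g)$: by definition $v=Aw$ for some $w\in\mathcal{K}(A,g)^\perp$, and $v\in\overline{\mathcal{K}(A,g)}$. Then $w=A^{-1}v$ belongs to $\overline{\mathcal{K}(A,g)}$ by the auxiliary invariance, so $w\in\overline{\mathcal{K}(A,g)}\cap\mathcal{K}(A,g)^\perp=\{0\}$, hence $v=0$.

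The only genuinely delicate point is the auxiliary invariance step in (ii): one must be sure that the algebraic identity for $A^{-1}p(A)g$ actually produces a vector in $\overline{\mathcal{K}(A,g)}$ (which crucially rests on the hypothesis $f\in\overline{\mathcal{K}(A,g)}$) and that continuity of $A^{-1}$ propagates this from the dense subset $\mathcal{K}(A,g)$ to its closure. Everything else is straightforward manipulation of the Krylov decomposition together with \eqref{eq:invariances}--\eqref{eq:invariances2}. Part (i) does not need boundedness of $A^{-1}$, only injectivity of $A$, which is consistent with the weaker hypothesis stated there.
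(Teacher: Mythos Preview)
Your proof is correct. The paper itself cites this proposition from \cite{CMN-2018_Krylov-solvability-bdd} without reproving it, but the arguments it gives for the unbounded generalisations (Propositions~\ref{prop:KrylIntTriv_KrylSol_unbdd} and the subsequent one) specialise exactly to yours in the bounded case: part~(i) is literally the $f=P_{\mathcal{K}}f+(\mathbbm{1}-P_{\mathcal{K}})f$ splitting followed by $A(\mathbbm{1}-P_{\mathcal{K}})f\in\mathcal{I}(A,g)$, and for part~(ii) the paper first establishes $\overline{A\mathcal{K}(A,g)}=\overline{\mathcal{K}(A,g)}$ (Lemma~\ref{lem:fK_AKdenseK}) and then runs a sequence argument $Av_n\to Aw\Rightarrow v_n\to w$ with $v_n\perp w$, which is just your invariance $A^{-1}\overline{\mathcal{K}(A,g)}\subset\overline{\mathcal{K}(A,g)}$ unpacked. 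The two packagings are equivalent and rest on the same two ingredients: the hypothesis $f\in\overline{\mathcal{K}(A,g)}$ to place $A^{-1}g$ correctly, and boundedness of $A^{-1}$ to pass to the closure.
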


$\mathcal{K}(A,g)$-reducibility of $A$ is a special case of triviality of $\mathcal{I}(A,g)$, and is therefore sufficient to ensure the Krylov solvability for $Af=g$. This is the case for any self-adjoint $A$, as already observed. More generally:

\begin{prop}\label{prop:whenA_Kreduced} $($\cite[Prop.~2.4]{CMN-2018_Krylov-solvability-bdd}$.)$
  For a given Hilbert space $\cH$ let $A\in\mathcal{B}(\cH)$ and $g\in\mathrm{ran} A$. Assume further that $A$ is normal. Then $A$ is $\mathcal{K}(A,g)$-reduced if and only if $A^*g\in\overline{\mathcal{K}(A,g)}$, in which case the associated inverse problem \eqref{eq:invLP} is Krylov-solvable.
\end{prop}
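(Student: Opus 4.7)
The plan is to split the equivalence into two directions and then exploit Krylov-reducibility to extract a solution inside $\overline{\mathcal{K}(A,g)}$. Throughout, the key structural fact I would rely on is that for a bounded operator $T$ on $\cH$, a closed subspace $M \subset \cH$ reduces $T$ (in the sense that both $M$ and $M^\perp$ are $T$-invariant) if and only if $M$ is invariant under both $T$ and $T^*$. Applied to $T = A$ and $M = \overline{\mathcal{K}(A,g)}$ (which is always $A$-invariant by \eqref{eq:invariances2}), this says that $\mathcal{K}(A,g)$-reducibility of $A$ is equivalent to $A^*\overline{\mathcal{K}(A,g)} \subset \overline{\mathcal{K}(A,g)}$.

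For the forward implication I would observe that $g$ itself sits in $\overline{\mathcal{K}(A,g)}$ (the $k=0$ element of the defining span), so the invariance of $\overline{\mathcal{K}(A,g)}$ under $A^*$ just noted gives $A^*g \in \overline{\mathcal{K}(A,g)}$ immediately, with no use of normality. The reverse implication is where normality enters essentially. Assuming $A^*g \in \overline{\mathcal{K}(A,g)}$, pick approximating polynomials $p_n$ with $p_n(A)g \to A^*g$ in $\cH$. Since $A$ is normal, $A^*$ commutes with every power of $A$; so for each $k \in \mathbb{N}_0$,
\[
 A^*(A^k g) \;=\; A^k(A^*g) \;=\; \lim_{n\to\infty} A^k p_n(A) g,
\]
and each $A^k p_n(A) g$ belongs to $\mathcal{K}(A,g)$ because it is a polynomial in $A$ evaluated at $g$. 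Hence $A^*(A^k g) \in \overline{\mathcal{K}(A,g)}$ for every $k$, which shows $A^* \mathcal{K}(A,g) \subset \overline{\mathcal{K}(A,g)}$. Using continuity of $A^*$ and taking the closure, $A^* \overline{\mathcal{K}(A,g)} \subset \overline{\mathcal{K}(A,g)}$; combined with \eqref{eq:invariances2} this yields Krylov reducibility.

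Once Krylov reducibility is established, I would conclude Krylov solvability by a direct decomposition argument rather than by invoking Proposition \ref{prop:KrylIntTriv_KrylSol}(i), so as to avoid any injectivity hypothesis. Explicitly, Krylov reducibility gives the orthogonal decomposition $A = A_1 \oplus A_2$ with respect to $\cH = \overline{\mathcal{K}(A,g)} \oplus \mathcal{K}(A,g)^\perp$, while $g = g_1 \oplus 0$ since $g \in \overline{\mathcal{K}(A,g)}$. Pick any $h \in \cH$ with $Ah = g$ (which exists because $g \in \mathrm{ran}\,A$); writing $h = h_1 \oplus h_2$ yields $A_1 h_1 = g_1$ and $A_2 h_2 = 0$. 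Then $f := h_1 \oplus 0 \in \overline{\mathcal{K}(A,g)}$ satisfies $Af = g$, so the inverse problem is Krylov-solvable.

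The main obstacle I anticipate is the reverse implication of the equivalence: passing from the single condition $A^*g \in \overline{\mathcal{K}(A,g)}$ to the full invariance $A^*\overline{\mathcal{K}(A,g)} \subset \overline{\mathcal{K}(A,g)}$. This step is precisely where normality is used, through the commutation $A^*A^k = A^k A^*$, and without it the argument collapses; it is instructive that boundedness of $A$ (hence continuity of $A^*$ and of each $A^k$) is what allows one to interchange the limit and the action of $A^*$, a manoeuvre that will fail in the unbounded settings discussed later in the paper.
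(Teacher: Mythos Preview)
Your argument is correct. Note, however, that the present paper does not actually supply its own proof of this proposition: it is quoted from \cite{CMN-2018_Krylov-solvability-bdd} as a background result, so there is no in-paper argument to compare against. Your reasoning follows the standard route one would expect: the characterisation of a reducing subspace via joint invariance under $A$ and $A^*$, the use of normality to commute $A^*$ past powers of $A$ (together with continuity of $A^k$ to pass to the limit), and the direct-sum decomposition to extract a Krylov solution without appealing to injectivity are all sound. The last point is a small but genuine improvement over simply invoking Proposition~\ref{prop:KrylIntTriv_KrylSol}(i), which would have required $A$ injective.
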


On the other hand, there are also inverse problems that are Krylov-solvable because they have a trivial Krylov intersection, \emph{without} being Krylov-reduced. An obvious example is the problem \cite[Example 2.5]{CMN-2018_Krylov-solvability-bdd}. Even though the operator in \cite[Example 2.5]{CMN-2018_Krylov-solvability-bdd} is \emph{not} normal, one can find analogous examples also in the relevant class of bounded, injective, \emph{normal} operators \cite[Example 3.8]{CMN-2018_Krylov-solvability-bdd}.
% , given the multiplication operator $M_z:L^2(\Omega)\to L^2(\Omega)$, $M_zf= zf$, with $\Omega=\{z\in\mathbb{C}\,|\,|z-2|<1\}$, and given $g\in L^2(\Omega)$ such that $\varepsilon\leqslant |g(z)|\leqslant \varepsilon'$ $\forall z\in\Omega$ for $\varepsilon,\varepsilon'>0$, one can show that
%   \begin{itemize}
%    \item $M_z$ is bounded, injective, and normal,
%    \item the inverse problem $M_zf=g$ is Krylov-solvable: $f\in\overline{K(M_z,g)}$,
%    \item however, $M_z$ is not Krylov-reduced.
%   \end{itemize}

This discussion gives a strong evidence that the \emph{triviality of the Krylov intersection} is the correct mechanism that captures the emergence of Krylov solvability.

In fact, the triviality of the Krylov intersection ensures also the \emph{existence} of a Krylov solution.

\begin{prop} $($\cite[Prop.~3.4 and 3.9]{CMN-2018_Krylov-solvability-bdd}$.)$
 For a given Hilbert space $\cH$ let $A\in\mathcal{B}(\cH)$ and $g\in\mathrm{ran} A$. If $\mathcal{I}(A,g)=\{0\}$, then there exists $f\in\overline{\mathcal{K}(A,g)}$ such that $Af=g$.
\end{prop}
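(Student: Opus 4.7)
The plan is to start from any solution and project it onto the Krylov subspace, then use triviality of the Krylov intersection to show the projected part already solves the problem.

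More concretely, since $g\in\mathrm{ran}\,A$, pick any $f_0\in\cH$ with $Af_0=g$. Using the orthogonal Krylov decomposition \eqref{eq:Krylov_decomposition}, write $f_0=f+h$ with $f\in\overline{\mathcal{K}(A,g)}$ and $h\in\mathcal{K}(A,g)^\perp$. The candidate Krylov solution is $f$; the goal is to show that the ``leftover'' $Ah$ is forced to vanish.

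Next I would set $Ah=g-Af$ and locate this vector inside $\overline{\mathcal{K}(A,g)}$. This uses two ingredients already in hand: first, $g\in\mathcal{K}(A,g)\subset\overline{\mathcal{K}(A,g)}$ by definition \eqref{eq:defKrylov}; second, $Af\in\overline{\mathcal{K}(A,g)}$ by the $A$-invariance \eqref{eq:invariances2}. Hence $Ah\in\overline{\mathcal{K}(A,g)}$. On the other hand, $Ah\in A\,\mathcal{K}(A,g)^\perp$ by construction. Combining these two memberships gives
\[
Ah\;\in\;\overline{\mathcal{K}(A,g)}\cap A\,\mathcal{K}(A,g)^\perp\;=\;\mathcal{I}(A,g)\;=\;\{0\}\,,
\]
so $Ah=0$ and therefore $Af=Af_0=g$ with $f\in\overline{\mathcal{K}(A,g)}$, as required.

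I do not anticipate any serious obstacle: every ingredient is already available in the excerpt. The only point worth being careful about is that, in contrast with Proposition~\ref{prop:KrylIntTriv_KrylSol}, injectivity of $A$ is \emph{not} assumed, so the argument must produce an explicit Krylov solution rather than invoke uniqueness; this is handled precisely by the orthogonal splitting of an arbitrary preimage $f_0$ of $g$ and by discarding the kernel-type component $h$ via the Krylov intersection hypothesis.
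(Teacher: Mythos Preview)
Your argument is correct and is exactly the approach the paper uses: the proposition itself is only cited here without proof, but the paper's own proofs of the unbounded analogues (Propositions~\ref{prop:KrylIntTriv_KrylSol_unbdd} and~\ref{prop:existence_Krylov_sol}) proceed by the identical projection-and-split strategy, writing $f_0=P_{\mathcal{K}}f_0+(\mathbbm{1}-P_{\mathcal{K}})f_0$, using the invariance $A\,\overline{\mathcal{K}(A,g)}\subset\overline{\mathcal{K}(A,g)}$ to place $A(\mathbbm{1}-P_{\mathcal{K}})f_0$ in $\mathcal{I}(A,g)$, and concluding that $P_{\mathcal{K}}f_0$ is the desired Krylov solution.
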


In turn, whereas not all bounded normal inverse problems are Krylov solvable, as seen above, \emph{normality} ensures that the Krylov solution, if existing, is unique.

\begin{prop} $($\cite[Prop.~3.10]{CMN-2018_Krylov-solvability-bdd}$.)$ 
 For a given Hilbert space $\cH$ let $A\in\mathcal{B}(\cH)$ and $g\in\mathrm{ran} A$. If $A$ is normal, then there exists at most one $f\in\overline{\mathcal{K}(A,g)}$ such that $Af=g$. More generally, the same conclusion holds if $A$ is bounded with $\ker A\subset\ker A^*$.  
\end{prop}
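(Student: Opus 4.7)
The plan is to reduce the uniqueness statement to showing that the subspace $\overline{\mathcal{K}(A,g)}\cap\ker A$ is trivial under the hypothesis $\ker A\subset\ker A^*$. Suppose $f_1,f_2\in\overline{\mathcal{K}(A,g)}$ both solve $Af=g$, and set $h:=f_1-f_2$. Then $h\in\overline{\mathcal{K}(A,g)}$ and $Ah=0$, so $h\in\ker A$, and by hypothesis also $A^*h=0$. It therefore suffices to show that such an $h$ must be orthogonal to every vector of $\mathcal{K}(A,g)$, because combined with $h\in\overline{\mathcal{K}(A,g)}$ this forces $h=0$.

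To check orthogonality, I would treat the generators $A^k g$ of $\mathcal{K}(A,g)$ in two cases. For $k\geqslant 1$, the identity
\[
 \langle h, A^k g\rangle \;=\; \langle A^* h, A^{k-1}g\rangle\;=\;0
\]
follows directly from $A^*h=0$. The remaining case $k=0$ is the key step: here I exploit the assumption $g\in\mathrm{ran}\,A$, so that $g=A\tilde f$ for some $\tilde f\in\cH$, and compute
\[
 \langle h,g\rangle\;=\;\langle h,A\tilde f\rangle\;=\;\langle A^*h,\tilde f\rangle\;=\;0.
\]
Thus $h\perp\mathcal{K}(A,g)$, hence by continuity $h\perp\overline{\mathcal{K}(A,g)}$, and combined with $h\in\overline{\mathcal{K}(A,g)}$ this gives $h=0$, i.e.\ $f_1=f_2$.

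To conclude, I would observe that the normal case is recovered from the general criterion: for a bounded normal $A$ the identity $\|Ax\|^2=\langle A^*Ax,x\rangle=\langle AA^*x,x\rangle=\|A^*x\|^2$ implies $\ker A=\ker A^*$, so the inclusion $\ker A\subset\ker A^*$ holds automatically and the first assertion follows from the second.

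There is no real obstacle here; the only subtlety worth emphasising is the role of the hypothesis $g\in\mathrm{ran}\,A$, which is what makes the $k=0$ contribution vanish. Without it one would only conclude $h\perp\mathrm{span}\{Ag,A^2g,\dots\}$, which is insufficient; it is precisely the assumption that $g$ itself is an image under $A$ that closes the argument and allows uniqueness to be deduced from the purely algebraic condition $\ker A\subset\ker A^*$.
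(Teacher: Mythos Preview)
Your proof is correct and follows essentially the same route as the paper's own argument (given for the uniqueness part of Theorem~\ref{thm:self-skew} and for the unbounded analogue in Section~\ref{sec:genunbdd}): one takes $h:=f_1-f_2\in\ker A\cap\overline{\mathcal{K}(A,g)}$, passes to $h\in\ker A^*$, and concludes $h=0$ from $\ker A^*\perp\overline{\mathrm{ran}\,A}\supset\overline{\mathcal{K}(A,g)}$. Your generator-by-generator computation of $\langle h,A^k g\rangle$ is simply an explicit unpacking of this last inclusion and orthogonality, with the $k=0$ step corresponding exactly to the use of $g\in\mathrm{ran}\,A$ in the inclusion $\mathcal{K}(A,g)\subset\mathrm{ran}\,A$.
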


\begin{cor}\label{cor:self-adj_Kry}
 If $A\in\mathcal{B}(\cH)$ is self-adjoint, then the inverse problem $Af=g$ with $g\in\mathrm{ran}A$ admits a unique Krylov solution.
\end{cor}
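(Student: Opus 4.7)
The plan is to chain together the results already stated, since self-adjointness is a special case of all the favourable hypotheses appearing in Propositions \ref{prop:whenA_Kreduced}, the existence proposition, and the uniqueness proposition.

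First, I would exploit normality. A self-adjoint $A\in\mathcal{B}(\cH)$ is in particular normal, and moreover $A^*g=Ag$. Since $Ag\in\mathcal{K}(A,g)\subset\overline{\mathcal{K}(A,g)}$, the hypothesis $A^*g\in\overline{\mathcal{K}(A,g)}$ of Proposition \ref{prop:whenA_Kreduced} is automatic. Hence $A$ is $\mathcal{K}(A,g)$-reduced, meaning that $\mathcal{K}(A,g)^\perp$ is $A$-invariant as well.

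Next, I would pass from Krylov reducibility to triviality of the Krylov intersection. By definition of $\mathcal{K}(A,g)$-reducibility, $A\mathcal{K}(A,g)^\perp\subset\mathcal{K}(A,g)^\perp$, and since obviously $\overline{\mathcal{K}(A,g)}\cap\mathcal{K}(A,g)^\perp=\{0\}$, one immediately reads off $\mathcal{I}(A,g)=\overline{\mathcal{K}(A,g)}\cap(A\,\mathcal{K}(A,g)^\perp)=\{0\}$. Applying now the existence proposition quoted just above (triviality of $\mathcal{I}(A,g)$ implies existence of $f\in\overline{\mathcal{K}(A,g)}$ with $Af=g$) produces a Krylov solution.

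Finally, for uniqueness, I would invoke the preceding proposition stating that whenever $A$ is normal (or more generally $\ker A\subset\ker A^*$), there is \emph{at most one} Krylov solution to $Af=g$; self-adjointness of course ensures $\ker A=\ker A^*$. Combining existence and at-most-uniqueness yields the corollary.

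There is essentially no hard step: all the operator-theoretic work has been done in the three propositions cited. The only point worth being careful about is noticing that one cannot invoke Proposition \ref{prop:KrylIntTriv_KrylSol} directly (it assumes injectivity of $A$, which is not hypothesised here), and must instead use the more general existence statement that applies to arbitrary $A\in\mathcal{B}(\cH)$ with $g\in\mathrm{ran}\,A$.
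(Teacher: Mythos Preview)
Your argument is correct and follows exactly the route the paper sets up: self-adjointness $\Rightarrow$ $\mathcal{K}(A,g)$-reducibility (either via Proposition~\ref{prop:whenA_Kreduced} or, as the paper remarks directly after \eqref{eq:invariances2}, from $A^*=A$ and \eqref{eq:invariances}) $\Rightarrow$ $\mathcal{I}(A,g)=\{0\}$ $\Rightarrow$ existence of a Krylov solution, and normality $\Rightarrow$ uniqueness. The paper states the corollary without proof precisely because it is this immediate concatenation of the preceding propositions; your care in avoiding Proposition~\ref{prop:KrylIntTriv_KrylSol} (which needs injectivity) in favour of the general existence statement is also the right move.
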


\section{The positive self-adjoint case: conjugate gradients.}\label{sec:cg}

While in section \ref{sec:bounded} we surveyed our current knowledge of Krylov solvability for inverse problems induced by bounded operators, let us now enter the scenario that is the object of the present work, namely Krylov solvability for the problem \eqref{eq:invLP} when the operator $A$ is possibly \emph{unbounded}.

Prior to discussing the unbounded case in fairly wide generality (section \ref{sec:general_unbdd}), we find it instructive to analyse, in this and the following section, a distinguished class of unbounded inverse problems that are relevant in applications, the \emph{self-adjoint} inverse problems.

Of course, Corollary \ref{cor:self-adj_Kry} already provides a complete (and affirmative) answer on the question of Krylov solvability when $A$ is \emph{bounded and self-adjoint}. Therefore, although the discussion of this and the following section covers also the bounded case as well, the perspective is actually on the unbounded case.

In this section, in particular, we consider the inverse problem \eqref{eq:invLP} when $A$ is a (possibly unbounded) self-adjoint and non-negative operator: $A=A^*\geqslant\mathbb{O}$.

This is in fact an extremely relevant case in applications, for it is the setting of the ample class of popular Krylov-subspace-based algorithms for the numerical solution to \eqref{eq:invLP} collectively known as the `\emph{method of conjugate gradients}' (also referred to as CG). CG was first proposed in 1952 by Hestenes and Stiefel \cite{Hestenes-Stiefer-ConjGrad-1952} and since then, together with its related derivatives (e.g., conjugate gradients method on the normal equations (CGNE), least-square QR method (LSQR), etc.), it has been widely studied in the finite-dimensional setting (see the monographs \cite{Saad-2003_IterativeMethods,Simoncini-Szyld-2007,Liesen-Strakos-2003}) and also, though to a lesser extent, in the infinite-dimensional Hilbert space setting.

For the purposes of the present discussion, let us briefly recall what the algorithm consists of in the special version that most evidently manifests its nature of a Krylov subspace algorithm.

Associated to the inverse problem \eqref{eq:invLP}, with $A=A^*\geqslant\mathbb{O}$ and $g$ satisfying \eqref{eq:assumption-g}, one has the solution manifold
\begin{equation}
 \mathcal{S}(A,g)\;:=\;\{f\in\mathcal{D}(A)\,|\,Af=g\}\,.
\end{equation}
As $\mathcal{S}(A,g)$ is a convex, non-empty set in $\cH$ which is also closed, owing to the self-adjointness and hence closedness of $A$, the projection map $P_\mathcal{S}:\cH\to\mathcal{S}(A,g)$ is unambiguously defined and produces, for generic $x\in\cH$, the closest to $x$ point in $\mathcal{S}(A,g)$. The $A$-smoothness of $g$ makes the definition \eqref{eq:defKrylov} of the Krylov subspace $\mathcal{K}(A,g)$ well posed, and next to it one can also consider the $N$-th order subspaces
\begin{equation}\label{eq:defKrylovN}
  \mathcal{K}_N(A,g) \;:=\;  \mathrm{span} \{g,Ag,\dots,A^{N-1}g\}\,,\qquad N\in\mathbb{N}\,.
\end{equation}

The CG method, in the special version that we are reviewing now, then consists of producing `iterates' $f_N\in \mathcal{K}_N(A,g)$ by means of the minimisation
\begin{equation}\label{eq:iterates}
 f_N\;:=\;\:\arg\!\!\!\!\!\min_{\!\!\!\!\!\!\!\!\! h\in\mathcal{K}_N(A,g)}\big\langle (h-P_\mathcal{S}h),A(h-P_\mathcal{S}h)\big\rangle\,,\qquad N\in\mathbb{N}\,.
\end{equation}
The jargon here reminds us that there are implementations of CG, equivalent to \eqref{eq:iterates}, which produce the $f_N$'s \emph{iteratively}, with no reference to the a priori knowledge of $P_\mathcal{S}$, and hence clearly suited for numerics \cite{Saad-2003_IterativeMethods,Liesen-Strakos-2003}.

Clearly, if for some $N$ one has $f_N\in \mathcal{S}(A,g)$, then $A f_N=g$: the algorithm has come to convergence in a finite number of steps. This is the case when $\dim\cH<\infty$. When instead $\dim\cH=\infty$, the generic behaviour of the CG iterates is to get asymptotically closer and closer to the solution manifold, thus providing approximate solutions to the inverse problem \eqref{eq:invLP}.

It is worth mentioning that all iterates \eqref{eq:iterates} have the same projection onto the solution manifold, more precisely \cite[Prop.~2.1]{CM-Nemi-unbdd-2019},
\begin{equation}
 P_\mathcal{S}\,f_N\;=\;P_\mathcal{S}\,\mathbf{0}\;=:f^\circ\qquad\forall N\in\mathbb{N}\,,
\end{equation}
that is, for all the $f_N$'s their closest to $\mathcal{S}(A,g)$ point is the projection onto $\mathcal{S}(A,g)$ of the zero vector of $\cH$. Since, by linearity of $A$, $\mathcal{S}(A,g)$ is in fact an affine space, $f^\circ$ is the \emph{minimal norm solution} to $Af=g$.

When $\dim\cH=\infty$, the convergence theory $f_N\to f^\circ$ has been studied over the last five decades, both in the scenario where $A$ is bounded with everywhere-defined bounded inverse \cite{Daniel-1967,Daniel-1970,Herzog-Ekkehard-2015}, or at least with bounded inverse on its range \cite{Kammerer-Nashed-1972}, and in the scenario where $A$ is bounded with possible unbounded inverse on its range \cite{Kammerer-Nashed-1972,Nemirovskiy-Polyak-1985,Nemirovskiy-Polyak-1985-II,Louis-1987,Brakhage-1987,Hanke-ConjGrad-1995,Engl-Hanke-Neubauer-1996}. Recently, the general unbounded-$A$ case was covered too \cite{CM-Nemi-unbdd-2019}. For concreteness, when $g$ is a quasi-analytic vector (recall that the set $\mathcal{D}^{qa}(A)$ of quasi-analytic vectors for $A$ are dense in $\cH$, \cite[Sect. 7.4]{schmu_unbdd_sa}) one has the following.

\begin{thm}\label{thm:cg}
 For a given Hilbert space $\cH$ let  $A=A^*\geqslant\mathbb{O}$ and let $g\in\mathrm{ran} A\cap \mathcal{D}^{qa}(A)$. Then the inverse problem $Af=g$ is Krylov solvable, and in particular one has
 \begin{equation}
  \lim_{N\to\infty}\big\| f_N-f^\circ\big\|\;=\;0
 \end{equation}
 along the sequence of the conjugate gradient iterates $f_N\in\mathcal{K}_N(A,g)$ defined in \eqref{eq:iterates}, where $f^\circ$ is the minimal norm solution to the considered inverse problem. 
\end{thm}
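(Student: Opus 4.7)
The plan is to recast the problem, via the spectral theorem, as a weighted polynomial-approximation question on the real line. Since $A=A^*$, the cyclic subspace $\cH_g:=\overline{\mathcal{K}(A,g)}$ reduces $A$, and the restriction $A|_{\cH_g}$ is unitarily equivalent to multiplication by $\lambda$ on $L^2(\mathbb{R},\ud\mu_g)$ (for $\mu_g$ the scalar spectral measure of $g$), with $g$ identified with the constant $\mathbf{1}$. The positivity $A\geqslant\mathbbm{O}$ places $\mathrm{supp}\,\mu_g\subset[0,+\infty)$; the inclusion $g\in\mathrm{ran}\,A\subset(\ker A)^\perp$ yields $\mu_g(\{0\})=0$; and a short reduction shows that the minimal-norm solution $f^\circ$ sits in $\cH_g$ and corresponds to $\lambda\mapsto\lambda^{-1}$, with $\lambda^{-1}\in L^2(\mu_g)$ encoding precisely $g\in\mathrm{ran}\,A$.

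Next, I would use quasi-analyticity to secure the density of polynomials in $L^2(\mu_g)$, and hence the Krylov-solvability claim. Indeed, $g\in\mathcal{D}^{qa}(A)$ reads, via $\|A^ng\|^2=\int\lambda^{2n}\,\ud\mu_g$, as Carleman's divergence condition on the even moments of $\mu_g$; Nussbaum's theorem then forces $\mu_g$ to be determinate, so that polynomials are dense in $L^2(\mu_g)$. In particular $\lambda^{-1}$ is an $L^2(\mu_g)$-limit of polynomials, so $f^\circ\in\overline{\mathcal{K}(A,g)}$ and $Af=g$ is Krylov-solvable.

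For the CG-convergence claim, the iterate reads $f_N=p_{N-1}(A)g$ for a polynomial $p_{N-1}$ of degree $\leqslant N-1$ minimising $\langle h-f^\circ,A(h-f^\circ)\rangle=\int\lambda\,|p(\lambda)-\lambda^{-1}|^2\,\ud\mu_g$; equivalently, setting $q(\lambda):=1-\lambda p(\lambda)$, the iterate corresponds to the polynomial of degree $\leqslant N$ with $q(0)=1$ that minimises $\int|q(\lambda)|^2\,\lambda^{-1}\,\ud\mu_g$. Feeding the polynomial approximants $r_n\to\lambda^{-1}$ of the previous paragraph into this minimisation, and using the optimality of $f_N$, yields $\|f_N-f^\circ\|_A\to 0$ at once.

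The main obstacle is upgrading this $A$-norm convergence to convergence in the ambient Hilbert norm, since under the spectral identification the $A$-norm weighs by $\lambda$ whereas $\|\cdot\|_\cH$ does not, so the small-$\lambda$ portion of $\mathrm{supp}\,\mu_g$ demands separate control. I would handle this by decomposing $f_N-f^\circ=(f_N-h_N)+(h_N-f^\circ)$, where $h_N$ is the $\cH$-orthogonal projection of $f^\circ$ onto $\mathcal{K}_N(A,g)$: the second summand tends to $0$ by the density step above, while the first, living in the finite-dimensional space $\mathcal{K}_N(A,g)$, is controlled by combining the Galerkin orthogonality $\langle Af_N-g,v\rangle=0$ valid for every $v\in\mathcal{K}_N(A,g)$ (a direct consequence of the optimality of $f_N$) with the polynomial approximation properties supplied by the quasi-analyticity of $g$. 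This comparison step is the delicate core of the argument and can be pushed through in the spirit of the techniques developed in \cite{CM-Nemi-unbdd-2019}.
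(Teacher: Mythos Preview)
The paper does not prove Theorem~\ref{thm:cg} in-house: it is quoted as the special case of \cite[Theorem~2.4]{CM-Nemi-unbdd-2019} with $f^{[0]}=\mathbf{0}$, $\sigma=0$, $\xi=1$, so the paper's ``proof'' is a one-line citation. Your proposal is therefore more ambitious than what the paper itself supplies, and your spectral/moment-problem route to the Krylov-solvability part is a genuine alternative---in fact much closer in spirit to the material of section~\ref{sec:selfadj_revisited} (Theorems~\ref{thm:krylov-core_analytic} and~\ref{thm:Krylov_isomorphism}) than to section~\ref{sec:cg}.

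There is, however, a logical ordering issue in your first paragraph. You open by asserting that $\cH_g:=\overline{\mathcal{K}(A,g)}$ reduces $A$ and that $A|_{\cH_g}$ is unitarily equivalent to multiplication by $\lambda$ on $L^2(\mathbb{R},\ud\mu_g)$, justified only by ``since $A=A^*$''. For unbounded $A$ this is \emph{not} automatic: whether $\overline{\mathcal{K}(A,g)}\cap\mathcal{D}(A)$ is $A$-invariant (absence of Krylov escape) is precisely question~(\textbf{Q2}) of section~\ref{sec:selfadj_revisited}, left open there for generic $g\in C^\infty(A)$. What \emph{is} always reducing is the spectral cyclic subspace $\cH_g':=\overline{\{E(\Delta)g\,|\,\Delta\text{ Borel}\}}$, and it is $A|_{\cH_g'}$ that is unitarily equivalent to multiplication on $L^2(\mu_g)$; under that equivalence $\overline{\mathcal{K}(A,g)}$ corresponds to the $L^2(\mu_g)$-closure of the polynomials. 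Your chain quasi-analytic $\Rightarrow$ Carleman's condition on the even moments $\Rightarrow$ Hamburger determinacy $\Rightarrow$ polynomials dense in $L^2(\mu_g)$ is correct and then yields $\overline{\mathcal{K}(A,g)}=\cH_g'$ \emph{a posteriori}---but you must run the argument in that order, working first in $\cH_g'$, and only then conclude that $f^\circ$ (which does lie in $\cH_g'$, since $\cH_g'$ reduces $A$ and $\cH_g'\subset(\ker A)^\perp$) belongs to $\overline{\mathcal{K}(A,g)}$.

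For the CG-convergence statement $\|f_N-f^\circ\|_{\cH}\to 0$, your final paragraph is candid: you obtain energy-norm convergence directly from the variational characterisation, and for the upgrade to $\cH$-norm you invoke ``the techniques developed in \cite{CM-Nemi-unbdd-2019}''. That is exactly what the paper does as well, so on this point your proposal and the paper coincide and neither is self-contained.
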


Theorem \ref{thm:cg} is the special case of a much wider class of convergence results for CG that for the (non-negative, self-adjoint) bounded-$A$ case were proved in full completeness by Nemirovskiy and Polyak \cite{Nemirovskiy-Polyak-1985,Nemirovskiy-Polyak-1985-II}, and for the (non-negative, self-adjoint) unbounded-$A$ case were proved in our recent work \cite{CM-Nemi-unbdd-2019}. (For the reader's reference, Theorem \ref{thm:cg} is the special case of \cite[Theorem 2.4]{CM-Nemi-unbdd-2019} when, in the notation therein, $f^{[0]}=\mathbf{0}$, $\sigma=0$, $\xi=1$.)

This establishes Krylov solvability in the framework of unbounded, non-negative, self-adjoint inverse problems.

\section{The general self-adjoint and skew-adjoint case}\label{sec:selfadj_skewadj}

In this section we present our first main result, that in practice extends Corollary \ref{cor:self-adj_Kry} to the whole classes of (possibly unbounded) self-adjoint or skew-adjoint operators.

As such, this also generalises the conjugate-gradient-based Krylov solvability statement of Theorem \ref{thm:cg} established for (possibly unbounded) non-negative, self-adjoint inverse problems. The reason why we dealt first with the CG-analysis of section \ref{sec:cg} is that our next Theorem \ref{thm:self-skew} is in fact based on the special non-negative case of Theorem \ref{thm:cg}.

\begin{thm}\label{thm:self-skew}
 For a given Hilbert space $\cH$ let  $A$ be a self-adjoint ($A^*=A$) or skew-adjoint ($A^*=-A)$ operator on $\cH$ and let $g\in\mathrm{ran} A\cap \mathcal{D}^{qa}(A)$. Then there exists a unique solution $f$ to $Af=g$ such that $f\in\overline{\mathcal{K}(A,g)}$. Thus, the inverse problem $Af=g$ is Krylov-solvable.
\end{thm}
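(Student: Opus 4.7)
The plan is to reduce the skew-adjoint case to the self-adjoint one, dispose of uniqueness by a short orthogonality observation, and then obtain existence in the self-adjoint case by bootstrapping from the non-negative case of Theorem \ref{thm:cg}. For the skew-adjoint reduction, $B:=\ii A$ is self-adjoint on $\mathcal{D}(A)$; one immediately checks $\mathcal{K}(B,\ii g)=\mathcal{K}(A,g)$ (the prefactors $\ii^{k+1}$ drop out of the span), $\|B^k(\ii g)\|=\|A^k g\|$ (so $\ii g\in\mathcal{D}^{qa}(B)\cap\mathrm{ran}\,B$), and $Bh=\ii g\Leftrightarrow Ah=g$. Hence it suffices to establish the claim when $A=A^*$.

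Uniqueness (in both cases) follows from the orthogonality $\overline{\mathcal{K}(A,g)}\perp\ker A$: for $v\in\ker A$ and $k\geqslant 1$, $\langle v,A^k g\rangle=\langle (A^*)^k v,g\rangle=(\pm 1)^k\langle A^k v,g\rangle=0$; for $k=0$, taking any solution $f_0\in\mathcal{D}(A)$ of $Af_0=g$, $\langle v,g\rangle=\langle v,Af_0\rangle=\langle A^*v,f_0\rangle=0$. Two Krylov solutions would then differ by an element of $\overline{\mathcal{K}(A,g)}\cap\ker A=\{0\}$.

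For existence with $A=A^*$, the plan is to invoke Theorem \ref{thm:cg} on the pair $(A^2,Ag)$, with $A^2$ non-negative self-adjoint. Since $g\in\mathcal{D}(A)$, any solution $f$ of $Af=g$ lies in $\mathcal{D}(A^2)$ and gives $A^2f=Ag$, so $Ag\in\mathrm{ran}\,A^2$. Granting the smoothness hypothesis of Theorem \ref{thm:cg} (see below), one extracts $h\in\overline{\mathcal{K}(A^2,Ag)}$ with $A^2h=Ag$; since $(A^2)^k(Ag)=A^{2k+1}g$, one has $\mathcal{K}(A^2,Ag)\subseteq\mathcal{K}(A,g)$, hence $h\in\overline{\mathcal{K}(A,g)}$. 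Self-adjointness then promotes $A^2h=Ag$ to $Ah=g$: indeed $A(Ah-g)=0$ puts $Ah-g\in\ker A$, while $\langle v,Ah\rangle=\langle Av,h\rangle=0$ for $v\in\ker A$ and $g\in\mathrm{ran}\,A\subseteq(\ker A)^\perp$ put it into $(\ker A)^\perp$, forcing equality.

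The hard step is verifying the quasi-analyticity of $Ag$ for $A^2$: Carleman's condition $\sum\|A^n g\|^{-1/n}=\infty$ does not in general imply $\sum\|A^{2n+1}g\|^{-1/n}=\infty$ (already for $\|A^n g\|=n!$ the latter sum converges). To circumvent this I would route the argument through the spectral theorem on the cyclic subspace: the closed subspace $\mathcal{H}_g\subseteq\cH$ generated by $g$ under the bounded Borel calculus of $A$ reduces $A$, and is unitarily identified with $L^2(\mathbb{R},\ud\mu_g)$ via $A\mapsto M_\lambda$, $g\mapsto\mathbf{1}$. Quasi-analyticity of $g$ is precisely Carleman's condition for the moments $\|A^n g\|^2=\int\lambda^{2n}\ud\mu_g$, so the Hamburger moment problem for $\mu_g$ is determinate and polynomials are dense in $L^2(\mathbb{R},\ud\mu_g)$. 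Projecting any solution of $Af=g$ onto $\mathcal{H}_g$ produces an element whose spectral image is $\lambda^{-1}$ $\mu_g$-a.e.\ (well defined and square-integrable, since $\mu_g(\{0\})=\|P_{\ker A}g\|^2=0$ from $g\in\mathrm{ran}\,A\subseteq(\ker A)^\perp$, and the projection lies in $\mathcal{H}_g$); density of polynomials then approximates it by vectors of $\mathcal{K}(A,g)$, delivering the Krylov solution.
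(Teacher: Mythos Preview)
Your uniqueness argument and your initial existence strategy (apply the non-negative result to $A^2$) coincide with the paper's proof; the paper treats the skew-adjoint case by passing to $-A^2$ rather than to $B=\ii A$, but this is an inessential variant.

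The substantive difference is that you catch a point the paper's proof slides over. Invoking Theorem~\ref{thm:cg} for the pair $(A^2,Ag)$ requires $Ag\in\mathcal{D}^{qa}(A^2)$, and you are right that this need \emph{not} follow from $g\in\mathcal{D}^{qa}(A)$. Your growth pattern $\|A^n g\|\sim n!$ is in fact realisable: with $A=M_\lambda$ on $L^2\big(\mathbb{R},c\,\lambda^2 e^{-|\lambda|}\,\ud\lambda\big)$ and $g=\mathbf{1}$ one has $g\in\mathrm{ran}\,A\cap\mathcal{D}^{qa}(A)$ while $\sum_n\|A^{2n+1}g\|^{-1/n}<\infty$. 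The paper does not verify this hypothesis, so as written its appeal to Theorem~\ref{thm:cg} is incomplete.

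Your repair via the cyclic spectral subspace is correct and gives a self-contained route that bypasses Theorem~\ref{thm:cg} altogether: the unitary $\mathcal{H}_g\cong L^2(\mathbb{R},\ud\mu_g^{(A)})$ sends $\mathcal{K}(A,g)$ to the polynomials; quasi-analyticity of $g$ is exactly Carleman's condition $\sum_n s_{2n}^{-1/(2n)}=\infty$ on the moments of $\mu_g^{(A)}$, hence the Hamburger problem is determinate and (M.~Riesz) polynomials are dense in $L^2(\mu_g^{(A)})$; and because $\mathcal{H}_g$ reduces $A$, projecting any solution yields a vector in $\mathcal{D}(A)\cap\mathcal{H}_g$ with spectral image $\lambda^{-1}\in L^2(\mu_g^{(A)})$. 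This is a genuinely different and arguably cleaner argument than the paper's; what you lose is the operational identification of the Krylov solution as a conjugate-gradient limit, which the paper's approach would supply were the quasi-analyticity gap filled.
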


\begin{proof} \underline{Existence}. In the self-adjoint case, since $A^2$ is self-adjoint and $A^2\geqslant \mathbb{O}$, Theorem \ref{thm:cg} implies that there exists $f\in\overline{\mathcal{K}(A^2,Ag)}\subset\overline{\mathcal{K}(A,g)}$ such that $A^2f=Ag$. Analogously, in the skew-adjoint case, since $-A^2$ is self-adjoint and $-A^2\geqslant \mathbb{O}$, then there exists $f\in\overline{\mathcal{K}(-A^2,-Ag)}=\overline{\mathcal{K}(A^2,Ag)}\subset\overline{\mathcal{K}(A,g)}$ such that $-A^2f=-Ag$.
 
 In either case, $f\in\overline{\mathcal{K}(A,g)}$, $f\in\mathcal{D}(A^2)\subset\mathcal{D}(A)$, and $A^2f=Ag$, equivalently, $A(Af-g)=0$. This shows that $Af-g\in\ker A$.
 
 On the other hand, both $Af$ and $g$ belong to $\mathrm{ran}A$, whence $Af-g\in\mathrm{ran}A\subset(\ker A^*)^\perp=(\ker A)^\perp$, where the last identity is clearly valid both for in the self-adjoint and in the skew-adjoint case.
 
 Then necessarily $Af-g=0$, which proves that $f\in\overline{\mathcal{K}(A,g)}$ is a solution to the considered inverse problem.
 
 \underline{Uniqueness}. If $f_1,f_2\in\overline{\mathcal{K}(A,g)}$ and $Af_1=g=Af_2$, then $f_1-f_2\in\ker A\cap \overline{\mathcal{K}(A,g)}$. Moreover, $\ker A=\ker A^*$ and $\overline{\mathcal{K}(A,g)}\subset\overline{\mathrm{ran}A}$. Therefore, $f_1-f_2\in\ker A^*\cap \overline{\mathrm{ran}A}$. Thus, $f_1=f_2$.
\end{proof}

\begin{rem}
 In view of the discussion in section \ref{sec:cg}, the proof of Theorem \ref{thm:self-skew} shows that the actual Krylov-solution $f$ to $Af=g$ is the minimal norm solution to $A^2f=Ag$ and admits approximants $f_N$, with $\|f_N-f\|_{\cH}\to 0$ as $N\to\infty$, defined by
 \[
   f_N\;:=\;\;\arg\!\!\!\!\!\!\min_{\!\!\!\!\!\!\!\!\! h\in\mathcal{K}_N(A^2,Ag)}\big\|A(h-f)\big\|_{\cH}^2\,.
 \]
 Thus, the iterates of the CG algorithm applied to the auxiliary problem $A^2f=Ag$ (interpreted as $-A^2f=-Ag$ in the skew-adjoint case) converge precisely to the Krylov solution to $Af=g$. 
\end{rem}

\begin{rem}
 Unbounded skew-adjoint inverse problems are intimately related to inverse problems induced by so-called Friedrichs operators. These constitute a class of elliptic, parabolic, and hyperbolic differential operators, that can be also characterised as abstract operators on Hilbert space $\cH$ \cite{EGC,ABcpde,ABCE,antonic_erceg_michelangeli_jde2017}, having the typical (but not the only one) form $T=A+C$ where $A^*=-A$ and $C\in\mathcal{B}(\cH)$. Theorem \ref{thm:self-skew} is applicable when $C$ is skew-adjoint itself.
\end{rem}

\section{New phenomena in the general unbounded case: `Krylov escape', generalised Krylov reducibility, generalised Krylov intersection}\label{sec:general_unbdd}

Let us start in this section the analysis of Krylov solvability of the inverse problem \eqref{eq:invLP}, under the working condition \eqref{eq:gsmooth}, when the (possibly unbounded) operator $A$ is \emph{densely defined} and \emph{closed} in $\cH$ -- without necessarily being self-adjoint.

A number of substantial novelties, due to domain issues, emerge in this case as compared to the bounded case discussed in section \ref{sec:bounded}.

The first unavoidable difference concerns the invariance of $\overline{\mathcal{K}(A,g)}$ (respectively, $\mathcal{K}(A,g)^\perp$) under the action of $A$ (resp., of $A^*$). Indeed, the inclusions \eqref{eq:invariances} certainly cannot be valid in general, because the above subspaces may well not be included, respectively, in $\mathcal{D}(A)$ and $\mathcal{D}(A^*)$.

\begin{ex}\label{example:Acreator}
 The `quantum mechanical creation operator'
 \[
  \begin{split}
   A\;&=\;-\frac{\ud}{\ud x}+x \\
   \mathcal{D}(A)\;&=\;\big\{h\in L^2(\mathbb{R})\,|\,-h'+xh\in L^2(\mathbb{R})\big\}
  \end{split}
 \]
 is densely defined, unbounded, and closed, and has the well-known property that
 \[
  \psi_{n+1}\;=\;\frac{1}{\sqrt{n+1}}\,A\psi_n\qquad n\in\mathbb{N}_0\,,
 \]
 where $(\psi_n)_{n\in\mathbb{N}_0}$ is the orthonormal basis of $L^2(\mathbb{R})$ of the Hermite functions $\psi_n(x)=c_n H_n(x) e^{-x^2/2}$ (here $c_n$ is a normalisation factor and $H_n$ is the $n$-th Hermite polynomial). In particular, each $\psi_n$ is a $C^\infty(A)$-function. Choosing $g=\psi_1$ evidently yields $\overline{\mathcal{K}(A,g)}=\mathrm{span}\{\psi_0\}^\perp$. But there are $L^2$-functions orthogonal to $\psi_0$ that do not belong to $\mathcal{D}(A)$. 
\end{ex}

It is then clear that only the possible invariance of $\overline{\mathcal{K}(A,g)}\cap\mathcal{D}(A)$ under $A$ and of $\mathcal{K}(A,g)^\perp\cap\mathcal{D}(A^*)$ under $A^*$ makes sense in general.

This naturally leads one to consider the operators $A|_{\overline{\mathcal{K}(A,g)}\cap\mathcal{D}(A)}$ (the so-called `part of $A$ on $\overline{\mathcal{K}(A,g)}$') and $A^*|_{\mathcal{K}(A,g)^\perp\cap\mathcal{D}(A^*)}$ (the `part of $A^*$ on $\mathcal{K}(A,g)^\perp$'). Noticeably, when $A$ is unbounded (and hence $\mathcal{D}(A)$ is a proper dense subspace of $\cH$), none of the two is densely defined in $\cH$, unless $\overline{\mathcal{K}(A,g)}=\cH$, as their domain is by construction the intersection of a proper dense and a proper closed subspace. Obviously, instead,  $A|_{\overline{\mathcal{K}(A,g)}\cap\mathcal{D}(A)}$ is densely defined in the Hilbert space $\overline{\mathcal{K}(A,g)}$.

\begin{lem}
 For a given Hilbert space $\cH$ let  $A$ be a densely defined operator on $\cH$ and let $g\in C^\infty(A)$. Then
 \begin{equation}\label{eq:Astarinv}
  A^*\big( \mathcal{K}(A,g)^\perp\cap\mathcal{D}(A^*) \big) \;\subset\; \mathcal{K}(A,g)^\perp\,.
 \end{equation}
\end{lem}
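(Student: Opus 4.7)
The plan is to verify the inclusion pointwise by testing against a spanning family of $\mathcal{K}(A,g)$. Pick an arbitrary $\varphi \in \mathcal{K}(A,g)^\perp \cap \mathcal{D}(A^*)$ and aim to show $A^*\varphi \perp A^k g$ for every $k\in\mathbb{N}_0$, which by linearity and the definition of $\mathcal{K}(A,g)$ gives $A^*\varphi \in \mathcal{K}(A,g)^\perp$ (recall that $\mathcal{K}(A,g)^\perp = \overline{\mathcal{K}(A,g)}^{\,\perp}$).

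The crucial observation is that the $A$-smoothness assumption $g \in C^\infty(A)$ provides exactly the domain membership needed to invoke the adjoint relation. Indeed, for every $k\in\mathbb{N}_0$ one has $A^k g \in \mathcal{D}(A^{k+1}) \subset \mathcal{D}(A)$, so the pairing $\langle \varphi, A(A^k g)\rangle$ is well-defined and, since $\varphi\in\mathcal{D}(A^*)$, the defining property of the Hilbert-space adjoint yields
\[
 \langle A^*\varphi, A^k g\rangle \;=\; \langle \varphi, A^{k+1} g\rangle.
\]
The right-hand side vanishes because $A^{k+1} g \in \mathcal{K}(A,g)$ and $\varphi \in \mathcal{K}(A,g)^\perp$. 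Running $k$ through $\mathbb{N}_0$ establishes $A^*\varphi \perp \mathcal{K}(A,g)$, hence $A^*\varphi \in \mathcal{K}(A,g)^\perp$.

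No genuine obstacle is present here: the content is essentially a one-line application of the definition of $A^*$, and the whole point of the lemma is to record that the $A$-smoothness hypothesis \eqref{eq:gsmooth} is precisely what guarantees that each $A^k g$ lies in $\mathcal{D}(A)$, so that the adjoint relation can be applied iteratively. The statement should be contrasted with the (false in general) analogue for $A$ itself acting on $\overline{\mathcal{K}(A,g)}\cap\mathcal{D}(A)$ -- the asymmetry that will motivate the notion of \emph{Krylov escape} discussed in the sequel.
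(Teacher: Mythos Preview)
Your proof is correct and follows essentially the same approach as the paper's: both pick a vector in $\mathcal{K}(A,g)^\perp\cap\mathcal{D}(A^*)$ and use the adjoint relation together with the invariance $A\,\mathcal{K}(A,g)\subset\mathcal{K}(A,g)$ (guaranteed by $g\in C^\infty(A)$) to conclude orthogonality. The only cosmetic difference is that you test directly against the spanning vectors $A^k g$, whereas the paper tests against an arbitrary $h\in\overline{\mathcal{K}(A,g)}$ via an approximating sequence in $\mathcal{K}(A,g)$; your version is in fact slightly more economical, since orthogonality to the spanning family already gives orthogonality to the closed span.
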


\begin{proof}
Let $z\in \mathcal{K}(A,g)^\perp\cap\mathcal{D}(A^*)$. For arbitrary $h\in \overline{\mathcal{K}(A,g)}$ let $(h_n)_{n\in\mathbb{N}}$ be a sequence in $\mathcal{K}(A,g)$ of norm-approximants of $h$. Then each $Ah_n\in \mathcal{K}(A,g)$, and therefore
\[
 \langle h, A^*z\rangle\;=\;\lim_{n\to\infty}\langle h_n, A^*z\rangle\;=\;\lim_{n\to\infty}\langle Ah_n,z\rangle\;=\;0\,,
\]
thus proving \eqref{eq:Astarinv}.
\end{proof}

The counterpart inclusion to \eqref{eq:Astarinv}, namely $A(\overline{\mathcal{K}(A,g)}\cap\mathcal{D}(A))\subset\overline{\mathcal{K}(A,g)}$ when $\overline{\mathcal{K}(A,g)}$ is only a proper closed subspace of $\cH$, turns out to be considerably less trivial. In fact, as somewhat counterintuitive as it appears, $A$ may indeed map vectors from $\overline{\mathcal{K}(A,g)}\cap\mathcal{D}(A)$ \emph{outside} of $\overline{\mathcal{K}(A,g)}$. In the present context, we shall refer to this phenomenon, that has no analogue in the bounded case, as `\emph{Krylov escape}'.

\begin{ex}[Krylov escape]
 Let $\cH'$ be a Hilbert space and $T'$ be a self-adjoint operator in $\cH'$ having a cyclic vector $g'$, meaning that there exists $g'\in\mathcal{D}(T')$ such that $\overline{{K}(T',g')}=\cH'$. (It is straightforward to construct many explicit examples for such a choice.) For any $1$-dimensional vector space $\cH_0$, say, $\cH_0=\mathrm{span}\{e_0\}$, set
\[
 \begin{split}
  \cH\;&:=\;\cH_0\oplus\cH' \\
  T\;&:=\;\mathbb{O}\oplus T' \\
  g\;&:=\;\mathbf{0}\oplus g'\,.
 \end{split}
\]
(The last condition is just an identification of $g$ as an element of $\cH$.)
Thus, $T$ is a self-adjoint operator in $\cH$ such that $Te_0=\mathbf{0}$ and $Tx'=T'x'$ $\forall x'\in\mathcal{D}(T')$, and moreover $\overline{\mathcal{K}(T,g)}=\cH'$. (Now the closure is taken with respect to $\cH$.) Furthermore,  let $x_0\in \cH$ such that $x_0\in \cH'\setminus\mathcal{D}(T')$. Then set 
\[
 \begin{split}
  \mathcal{D}(A)\;&:=\;\mathcal{D}(T)\dotplus\mathrm{span}\{x_0\} \\
  Ax_0\;&:=\;e_0 \\
  Ax\;&:=\;Tx\qquad\forall x\in\mathcal{D}(T)\,.
 \end{split}
\]
$A$ is meant to be defined by the above identities and extended by linearity on the whole $\mathcal{D}(T)\dotplus\mathrm{span}\{x_0\}$. The operator $A$ is densely defined in $\cH$ by construction.

$\bullet$ \underline{Closedness}. Let us check that if $x_n+\mu_n x_0\to v$ and $A(x_n+\mu_n x_0)\to w$ in $\cH$ as $n\to\infty$ for some vectors $v,w\in \cH$, where $(x_n)_{n\in\mathbb{N}}$ is a generic sequence in $\mathcal{D}(T)$ and $(\mu_n)_{n\in\mathbb{N}}$ is a generic sequence in $\mathbb{C}$, then $v\in\mathcal{D}(A)$ and $Av=w$. First, we observe that it must be $\mu_n\to\mu$ for some $\mu\in\mathbb{C}$, for otherwise there would be no chance for the vectors $A(x_n+\mu_n x_0)=Tx_n+\mu_n e_0$ to converge as assumed, because $Tx_n\perp\mu_n e_0$. Next, since $\mu_n\to\mu$ and $x_n+\mu_n x_0\to v$, then necessarily $x_n\to x$ for some $x\in\cH$ satisfying $x+\mu x_0=v$; analogously, since  $Tx_n+\mu_n e_0=A(x_n+\mu_n x_0)\to w$, then $Tx_n\to y$ for some $y\in\cH$ satisfying $y+\mu e_0=w$. As by construction $T$ is self-adjoint and hence closed, then necessarily $x\in\mathcal{D}(T)$ and $Tx=y$. In turn, this implies that $v\in\mathcal{D}(A)$ and $Av=w$. The conclusion is that $A$ is closed. 

$\bullet$ \underline{Occurrence of Krylov escape}. By the construction above, $\overline{\mathcal{K}(A,g)}=\cH'=\mathrm{span}\{e_0\}^\perp$. Let us focus on the vector $e_0$. On the one hand, $e_0=Ax_0$ and $x_0$ by definition belongs both to $\overline{\mathcal{K}(A,g)}$ and to $\mathcal{D}(A)$. Therefore $e_0\in A(\overline{\mathcal{K}(A,g)}\cap\mathcal{D}(A))$. On the other hand, however, $e_0\in(\cH')^\perp=\mathcal{K}(A,g)^\perp$, whence $e_0\notin\overline{\mathcal{K}(A,g)}$. This provides a counterexample of a densely defined closed operator $A$ in $\cH$ such that the inclusion
\[
 A\big(\overline{\mathcal{K}(A,g)}\cap\mathcal{D}(A)\big)\;\subset\;\overline{\mathcal{K}(A,g)}\,.
\]
is \emph{violated}.
\end{ex}

Owing to the possible occurrence of the Krylov escape phenomenon, the invariance of $\overline{\mathcal{K}(A,g)}\cap\mathcal{D}(A)$ requires additional assumptions. A reasonable one is to assume further that the operator $A|_{\overline{\mathcal{K}(A,g)}\cap\mathcal{D}(A)}$ and its restriction $A|_{\mathcal{K}(A,g)}$ are in a sense as close as possible. To this aim, let us observe first the following.

\begin{lem}\label{lem:Arestrictedclosure}
  For a given Hilbert space $\cH$ let  $A$ be a densely defined and closed operator on $\cH$ and let $g\in C^\infty(A)$. Then
  \begin{itemize}
   \item[(i)] the operator $A|_{\overline{\mathcal{K}(A,g)}\cap\mathcal{D}(A)}$ is closed,
   \item[(ii)] and the operator $A|_{\mathcal{K}(A,g)}$ is closable.
  \end{itemize}
\end{lem}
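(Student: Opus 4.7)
The plan is to derive both statements directly from the closedness of $A$ itself, together with elementary domain bookkeeping; neither assertion requires any new ingredient beyond the standard closed-graph reasoning.

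For part (i), I would argue by the sequential definition of closedness. Take a sequence $(h_n)_n$ in $\overline{\mathcal{K}(A,g)}\cap \mathcal{D}(A)$ with $h_n \to h$ in $\cH$ and $A h_n \to w$ in $\cH$. Two conclusions then need to be extracted from the two bits of structure that $\mathcal{D}(B):=\overline{\mathcal{K}(A,g)}\cap \mathcal{D}(A)$ carries: the limit $h$ lies in $\overline{\mathcal{K}(A,g)}$ because that subspace is closed in $\cH$, and $h$ lies in $\mathcal{D}(A)$ with $Ah = w$ because the ambient operator $A$ is closed. Combining the two shows $h\in \mathcal{D}(B)$ and $B h = w$, i.e.\ $B := A|_{\overline{\mathcal{K}(A,g)}\cap\mathcal{D}(A)}$ is closed. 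The hypothesis $g\in C^\infty(A)$ is used only to ensure that $\mathcal{K}(A,g)\subset \mathcal{D}(A)$, so that $\mathcal{D}(B)$ is non-trivial.

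For part (ii), I would invoke the general principle that any restriction of a closed operator is closable. Explicitly, for $C := A|_{\mathcal{K}(A,g)}$ and any sequence $(k_n)_n$ in $\mathcal{K}(A,g)$ with $k_n \to \mathbf{0}$ and $Ck_n = Ak_n \to w$, closedness of $A$ forces $\mathbf{0}\in\mathcal{D}(A)$ and $A\mathbf{0} = w$, hence $w=\mathbf{0}$. This is precisely the closability criterion, so $A|_{\mathcal{K}(A,g)}$ admits a closure.

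The statement has no genuine obstacle: its content is essentially a reminder that intersecting the domain with a \emph{closed} subspace preserves closedness, whereas intersecting with a merely dense linear subspace $\mathcal{K}(A,g)$ preserves closability but not in general closedness. The subtle point, worth flagging explicitly in the write-up even though it plays no role in the present proof, is that the two operators obtained have very different domains in general, and their closures need not agree. Precisely the question of whether $\overline{A|_{\mathcal{K}(A,g)}} = A|_{\overline{\mathcal{K}(A,g)}\cap\mathcal{D}(A)}$, i.e.\ a form of the Krylov-core condition alluded to in the introduction, is what will be needed later to prevent Krylov escape; Lemma \ref{lem:Arestrictedclosure} is therefore best viewed as setting the stage for that subsequent discussion rather than as a result demanding its own difficult argument.
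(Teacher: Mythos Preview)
Your proof is correct and follows essentially the same approach as the paper: part (i) is handled by the identical sequential closed-graph argument (closedness of $A$ gives $h\in\mathcal{D}(A)$ with $Ah=w$, closedness of $\overline{\mathcal{K}(A,g)}$ gives $h\in\overline{\mathcal{K}(A,g)}$), and part (ii) is the observation that a restriction of a closed operator is closable. The only cosmetic difference is that the paper deduces (ii) from (i) via the operator inclusion $A|_{\mathcal{K}(A,g)}\subset A|_{\overline{\mathcal{K}(A,g)}\cap\mathcal{D}(A)}$, whereas you verify the closability criterion for $A|_{\mathcal{K}(A,g)}$ directly from the closedness of $A$; both routes amount to the same general principle.
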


\begin{proof} Obviously $A|_{\mathcal{K}(A,g)}\subset A|_{\overline{\mathcal{K}(A,g)}\cap\mathcal{D}(A)}$, in the sense of operator inclusion, so part (ii) follows at once from part (i). In turn, part (i) is true as is always the case when one restricts a \emph{closed} operator to the intersection of its domain with a closed subspace: the restriction operator too is closed. Explicitly, let $((x_n,Ax_n))_{n\in\mathbb{N}}$ be an arbitrary $\cH\oplus\cH$-convergent sequence in the graph of $A|_{\overline{\mathcal{K}(A,g)}\cap\mathcal{D}(A)}$, that is, for some $x,y\in\cH$ one has $\overline{\mathcal{K}(A,g)}\cap\mathcal{D}(A)\ni x_n\to x$ and $Ax_n\to y$ in $\cH$. Then, by closedness of $A$, $x\in\mathcal{D}(A)$ and $Ax=y$. Moreover, since $x_n\in\overline{\mathcal{K}(A,g)}$ $\forall n$, also for the limit point one has $x\in\overline{\mathcal{K}(A,g)}$. Thus, $x\in\overline{\mathcal{K}(A,g)}\cap\mathcal{D}(A)$. This shows that the pair $(x,y)$ belongs to the graph of $A|_{\overline{\mathcal{K}(A,g)}\cap\mathcal{D}(A)}$, which is therefore closed in $\cH\oplus\cH$. 
\end{proof}

\begin{rem}
 With a completely analogous argument one shows that the operator $A^*|_{\mathcal{K}(A,g)^\perp\cap\mathcal{D}(A^*)}$ is closed too.
\end{rem}

It is then natural to consider the case when the operator closure of $A|_{\mathcal{K}(A,g)}$ is precisely $A|_{\overline{\mathcal{K}(A,g)}\cap\mathcal{D}(A)}$.

\begin{defn}\label{def:Krylovcorecond}
 For a given Hilbert space $\cH$ let  $A$ be a densely defined and closed operator on $\cH$ and let $g\in C^\infty(A)$. Then the pair $(A,g)$ is said to satisfy the `\emph{Krylov-core condition}' when the subspace $\mathcal{K}(A,g)$ is a core for $A|_{\overline{\mathcal{K}(A,g)}\cap\mathcal{D}(A)}$. Explicitly, this is the requirement that 
 \begin{equation}\label{eq:Krylovcorecond}
 \overline{A|_{\mathcal{K}(A,g)}}\;=\;A|_{\overline{\mathcal{K}(A,g)}\cap\mathcal{D}(A)}
\end{equation}
 in the sense of operator closure, equivalently, it is the requirement that $\mathcal{K}(A,g)$ is dense in $\overline{\mathcal{K}(A,g)}\cap\mathcal{D}(A)$ in the graph norm $\|h\|_A:=(\|h\|_{\cH}^2+\|Ah\|_{\cH}^2)^{\frac{1}{2}}$:
 \begin{equation}\label{eq:Krylovcorecond2}
  \overline{\mathcal{K}(A,g)}^{\|\,\|_A}\;=\;\overline{\mathcal{K}(A,g)}\cap\mathcal{D}(A)\,.
 \end{equation}
% with $\|h\|_A:=\|h\|+\|Ah\|$.
\end{defn}

\begin{rem}\label{rem:one-inclusion-is-trivial}
 By closedness of $A$, the inclusion
 \begin{equation}\label{eq:Krylovcorecond2-oneinclusion}
  \overline{\mathcal{K}(A,g)}^{\|\,\|_A}\;\subset\;\overline{\mathcal{K}(A,g)}\cap\mathcal{D}(A)
 \end{equation}
 is always  true, as one sees reasoning as for Lemma \ref{lem:Arestrictedclosure}.
\end{rem}

\begin{ex}
 Let $\cH=\ell^2(\mathbb{N}_0)$ and, in terms of the canonical orthonormal basis $(e_n)_{n\in\mathbb{N}_0}$, let $A$ be the densely defined and closed operator defined by
 %$Ae_n=(n+1) e_{n+1}$ and extended by linearity over the domain
 \[
 \begin{split}
   \mathcal{D}(A)\;&:=\;\Big\{x\equiv(x_n)_{n\in\mathbb{N}_0}\,\Big|\,\sum_{n=0}^\infty (n+1)^2|x_n|^2<+\infty\Big\} \\
   Ax\;&:=\;(0,x_0,2x_1,3x_2,\dots)\qquad\textrm{for }x\in\mathcal{D}(A)
 \end{split}
 \]
 (thus, in particular, $Ae_n=(n+1) e_{n+1}$ for any $n\in\mathbb{N}_0$). 
 Obviously, we have $\mathcal{K}(A,e_0)=\mathrm{span}\{e_1,e_2,e_3,\dots\}$ and $\overline{\mathcal{K}(A,e_0)}=\{e_0\}^\perp$. Let
  \[
   x\;:=\;(0,x_1,x_2,x_3,x_4,\dots)\quad\textrm{with}\quad\sum_{n=1}^\infty (n+1)^2|x_n|^2<+\infty
 \]
 be a generic vector in $\overline{\mathcal{K}(A,e_0)}\cap\mathcal{D}(A)$ and for each $N\in\mathbb{N}$ let
 \[
   x^{(N)}\;:=\;(0,x_1^{(N)},\dots,x_N^{(N)},0,0,\dots)\quad\textrm{with}\quad x_n^{(N)}=x_n+\frac{1}{n^2N}\,.
 \]
 Then $x^{(N)}\in\mathcal{K}(A,e_0)$ and
 \[
  \begin{split}
   \big\| x-x^{(N)}\big\|_A^2\;&=\;\sum_{n=1}^\infty (1+(n+1)^2)\big|x_n-x_n^{(N)}\big|^2 \\
   &=\;\frac{1}{\,N^2}\sum_{n=1}^N\frac{n^2+2n+2}{n^4}+\sum_{n=N+1}^\infty(1+(n+1)^2)|x_n|^2 \xrightarrow[]{N\to\infty}\;0\,,
  \end{split}
 \]
 whence $x\in\overline{\mathcal{K}(A,e_0)}^{\|\,\|_A}$. Thus, $\overline{\mathcal{K}(A,e_0)}^{\|\,\|_A}\supset\overline{\mathcal{K}(A,e_0)}\cap\mathcal{D}(A)$, which together with \eqref{eq:Krylovcorecond2-oneinclusion} shows that the pair $(A,e_0)$ does satisfy the Krylov-core condition \eqref{eq:Krylovcorecond2}. 
\end{ex}

The Krylov-core condition is indeed sufficient to finally ensure that $A$ maps $\overline{\mathcal{K}(A,g)}\cap\mathcal{D}(A)$ into $\overline{\mathcal{K}(A,g)}$.

\begin{lem}\label{lem:krycore-implies-noescape}
 For a given Hilbert space $\cH$ let  $A$ be a densely defined and closed operator on $\cH$ and let $g\in  C^\infty(A)$.
 \begin{itemize}
  \item[(i)] One has the inclusion
  \begin{equation}\label{eq:invarianceKrylovcorecond}
  A\big( \overline{\mathcal{K}(A,g)}\cap\mathcal{D}(A)\big)\;\subset\;\overline{A\mathcal{K}(A,g)}
 \end{equation}
 if and only if $A$ and $g$ satisfy the Krylov-core condition \eqref{eq:Krylovcorecond}.
 \item[(ii)] In particular, under the Krylov-core condition one has
 \begin{equation}\label{eq:AKinv}
  A\big( \overline{\mathcal{K}(A,g)}\cap\mathcal{D}(A)\big)\;\subset\; \overline{\mathcal{K}(A,g)}\,.
 \end{equation}
 \end{itemize}
\end{lem}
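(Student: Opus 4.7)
The plan is to prove the biconditional in (i) directly, then deduce (ii) as an immediate corollary.

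For the forward direction of (i), I would take any $x\in\overline{\mathcal{K}(A,g)}\cap\mathcal{D}(A)$ and invoke the Krylov-core condition in the form \eqref{eq:Krylovcorecond2} to obtain a sequence $(x_n)_{n\in\mathbb{N}}\subset\mathcal{K}(A,g)$ with $x_n\to x$ in the graph norm $\|\cdot\|_A$. In particular $Ax_n\to Ax$ in $\cH$, and since each $Ax_n\in A\mathcal{K}(A,g)$, the limit $Ax$ lies in $\overline{A\mathcal{K}(A,g)}$, giving exactly the asserted inclusion \eqref{eq:invarianceKrylovcorecond}.

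For the reverse direction, assume $A(\overline{\mathcal{K}(A,g)}\cap\mathcal{D}(A))\subset\overline{A\mathcal{K}(A,g)}$ and pick $x\in\overline{\mathcal{K}(A,g)}\cap\mathcal{D}(A)$. The hypothesis supplies a sequence $(u_n)\subset\mathcal{K}(A,g)$ with $Au_n\to Ax$ in $\cH$, while membership of $x$ in $\overline{\mathcal{K}(A,g)}$ supplies another sequence $(w_n)\subset\mathcal{K}(A,g)$ with $w_n\to x$ in $\cH$. My plan is to fuse these into a single sequence $(x_n)\subset\mathcal{K}(A,g)$ with $x_n\to x$ and $Ax_n\to Ax$ simultaneously, which is exactly the statement that $x$ lies in the graph-norm closure of $\mathcal{K}(A,g)$, i.e.\ \eqref{eq:Krylovcorecond2}. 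To perform the fusion I would consider the closed operator $B:=\overline{A|_{\mathcal{K}(A,g)}}$ of Lemma \ref{lem:Arestrictedclosure}(ii); under the standing inclusion hypothesis $B$ maps its domain into $\overline{\mathcal{K}(A,g)}$ and can therefore be treated as a densely defined closed operator on the Hilbert space $\overline{\mathcal{K}(A,g)}$. Showing that $\mathcal{D}(B)$ already exhausts $\overline{\mathcal{K}(A,g)}\cap\mathcal{D}(A)$ would then be approached via a Hahn--Banach/orthogonality argument in the product space $\cH\oplus\cH$: if some $(x_0,Ax_0)$ with $x_0\in\overline{\mathcal{K}(A,g)}\cap\mathcal{D}(A)$ were not in $\overline{\mathrm{graph}(A|_{\mathcal{K}(A,g)})}$, a vector in the orthogonal complement would produce an element of $\mathcal{D}(B^*)\setminus\mathcal{D}((A|_{\overline{\mathcal{K}(A,g)}\cap\mathcal{D}(A)})^*)$, which the range condition of (i) is designed precisely to exclude.

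Part (ii) then follows at once from the just-established forward direction of (i), together with the obvious chain $\overline{A\mathcal{K}(A,g)}\subset\overline{\mathcal{K}(A,g)}$, which is immediate because $A\mathcal{K}(A,g)\subset\mathcal{K}(A,g)$ by construction of the Krylov subspace.

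I expect the main obstacle to be the reverse direction of (i): the hypothesis delivers $\cH$-approximation of $x$ and of $Ax$ \emph{separately}, but combining them into one graph-norm approximating sequence is not formal. It requires genuinely exploiting the closedness of $B$ and the crucial point that the inclusion hypothesis is exactly what allows $B$ to be viewed as a closed operator on the Hilbert space $\overline{\mathcal{K}(A,g)}$, whose core by construction contains $\mathcal{K}(A,g)$—the adjoint-based argument sketched above is, I expect, the cleanest way to carry this through.
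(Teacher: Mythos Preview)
Your treatment of the forward implication in (i) and of part (ii) is correct and coincides with the paper's argument.

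For the reverse implication in (i) your route diverges from the paper's and remains a sketch with a genuine gap. The paper does not attempt to build a single graph-norm approximating sequence directly; it argues by contrapositive. Assuming the Krylov-core condition fails, one picks $z\in(\overline{\mathcal{K}(A,g)}\cap\mathcal{D}(A))\setminus\mathcal{D}(\overline{A|_{\mathcal{K}(A,g)}})$; then for any sequence $(z_n)_{n\in\mathbb{N}}\subset\mathcal{K}(A,g)$ with $z_n\to z$ in $\cH$ one cannot have $Az_n\to Az$ (else $z$ would lie in $\mathcal{D}(\overline{A|_{\mathcal{K}(A,g)}})$), and the paper concludes that $Az\notin\overline{A\mathcal{K}(A,g)}$, so \eqref{eq:invarianceKrylovcorecond} fails. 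No adjoints, no Hahn--Banach.

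By contrast, your proposed direct argument does not make visible how the inclusion hypothesis is actually used. First, your observation that ``under the standing inclusion hypothesis $B$ maps its domain into $\overline{\mathcal{K}(A,g)}$'' holds unconditionally: elements of $\mathcal{D}(B)$ are graph-limits of vectors from $\mathcal{K}(A,g)$, whose $A$-images already lie in $A\mathcal{K}(A,g)\subset\mathcal{K}(A,g)$, so $B$ always maps into $\overline{\mathcal{K}(A,g)}$ and the hypothesis has not yet entered. Second, your assertion that a separating vector in $\cH\oplus\cH$ would produce an element of $\mathcal{D}(B^*)\setminus\mathcal{D}((A|_{\overline{\mathcal{K}(A,g)}\cap\mathcal{D}(A)})^*)$ ``which the range condition of (i) is designed precisely to exclude'' is unsubstantiated: the range condition speaks only about where $A$ sends vectors of $\overline{\mathcal{K}(A,g)}\cap\mathcal{D}(A)$, and you give no mechanism linking this to the two adjoint domains. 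Without that link the argument does not close, and the ``fusion'' of the two separate approximating sequences $(u_n)$ and $(w_n)$ that you flag at the outset is precisely the difficulty that remains unresolved.
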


\begin{proof}
 Clearly \eqref{eq:AKinv} follows at once from \eqref{eq:invarianceKrylovcorecond} as $\overline{A\mathcal{K}(A,g)}\subset  \overline{\mathcal{K}(A,g)}$. Let us focus then on the proof of part (i). Assume that \eqref{eq:Krylovcorecond} is satisfied and let $z\in\overline{\mathcal{K}(A,g)}\cap\mathcal{D}(A)$, the domain of $A|_{\overline{\mathcal{K}(A,g)}\cap\mathcal{D}(A)}$. Since the latter operator is the closure of $A|_{\mathcal{K}(A,g)}$, then there exists $(z_n)_{n\in\mathbb{N}}$ in $\mathcal{K}(A,g)$ such that $z_n\to z$ and $A|_{\mathcal{K}(A,g)}z_n\to A|_{\overline{\mathcal{K}(A,g)}\cap\mathcal{D}(A)} z$, i.e., $Az_n\to Az$. This shows that $Az\in\overline{A\mathcal{K}(A,g)}$. For the converse implication, assume now that $z\in\overline{\mathcal{K}(A,g)}\cap\mathcal{D}(A)$ and that $A|_{\overline{\mathcal{K}(A,g)}\cap\mathcal{D}(A)}$ is a \emph{proper} closed extension of $\overline{A|_{\mathcal{K}(A,g)}}$. This means that whatever sequence $(z_n)_{n\in\mathbb{N}}$ in $\mathcal{K}(A,g)$ of norm-approximants of $z$ is considered, one cannot have $Az_n\to Az$, because this would mean $\overline{A|_{\mathcal{K}(A,g)}}z_n\to A|_{\overline{\mathcal{K}(A,g)}\cap\mathcal{D}(A)} z$, that is, $\overline{A|_{\mathcal{K}(A,g)}}z=A|_{\overline{\mathcal{K}(A,g)}\cap\mathcal{D}(A)} z$, contrarily to the assumption $ \overline{A|_{\mathcal{K}(A,g)}}\varsubsetneq A|_{\overline{\mathcal{K}(A,g)}\cap\mathcal{D}(A)}$. Thus, $Az$ cannot have norm-approximants in $A\mathcal{K}(A,g)$ and hence \eqref{eq:invarianceKrylovcorecond} cannot be valid. 
\end{proof}

\begin{rem}\label{rem:AKinK}
 In general $\overline{A\mathcal{K}(A,g)}\varsubsetneq\overline{\mathcal{K}(A,g)}$. Example \ref{example:Acreator} shows that in that case $\overline{A\mathcal{K}(A,g)}=\mathrm{span}\{\psi_0,\psi_1\}^\perp$, whereas $\overline{\mathcal{K}(A,g)}=\mathrm{span}\{\psi_0\}^\perp$.
\end{rem}

% 
% The Krylov-core condition is a non-trivial assumption that may indeed fail to hold in several circumstances. Among others, noticeably, it may be violated in the self-adjoint case, as the following example shows.
% 
% 
% \begin{example}
% %  Let $\cH=L^2(\mathbb{R})$, $g=e^{-x^2}$, and $A=-\ii\frac{\ud}{\ud x}$, self-adjointly realised on $H^1(\mathbb{R})$. Clearly, $g\in C^\infty(A)$. As $g$ and its subsequent derivatives form the collection of the Hermite functions, a well-known orthonormal basis of $L^2(\mathbb{R})$, then $\mathcal{K}(A,g)$ is the span of the Hermite functions and $\overline{\mathcal{K}(A,g)}=\cH$. If the Krylov-core condition was valid for the pair $A$ and $g$, then 
%  On the Hilbert space $\cH=L^2[0,1]$ consider the self-adjoint Dirichlet Laplacian, i.e., the operator
%  \[
%   A=-\frac{\ud^2}{\ud x}\,,\qquad \mathcal{D}(A)=\big\{h\in H^2(0,1)\,\big|\,h(0)=h(1)=0\big\}\,.
%  \]
%  Let $g$ be a non-zero function in $C^\infty_0(0,1)$. Then $g\in C^\infty(A)$ and $\mathcal{K}(A,g)\subset C^\infty_0(0,1)$. The $\|\,\|_A$-norm is here the $H^2$-norm and the $H^2$-closure of $C^\infty_0(0,1)$ is
%  \[
%   \overline{C^\infty_0(0,1)}^{\|\,\|_{H^2}}\,=\;H^2_0(0,1)\;=\;\big\{h\in H^2(0,1)\,\big|\,h(0)=h(1)=0\,,h'(0)=h'(1)=0\big\}\,,
%  \]
%  a proper closed subspace of $\mathcal{D}(A)$.
%  
%  
% \end{example}

In the framework of the preceding discussion the two key notions of Krylov reducibility and Krylov intersection introduced in the bounded case in Definition \ref{def:Kreduc-Kintersec-bounded} can be now generalised to the unbounded case.

\begin{defn}\label{def:Kreduc-Kintersec-unbounded}
  For a given Hilbert space $\cH$ let  $A$ be a densely defined and closed operator on $\cH$ and let $g\in  C^\infty(A)$.
  \begin{itemize}
   \item $A$ is said to be $\mathcal{K}(A,g)$-\emph{reduced (in the generalised sense)}, for short \emph{Krylov-reduced}, when 
   \begin{equation}\label{eq:Krylov-reducibility-generalised}
    \begin{split}
     A\big( \overline{\mathcal{K}(A,g)}\cap\mathcal{D}(A)\big)\;&\subset\; \overline{\mathcal{K}(A,g)} \\
     A\big( \mathcal{K}(A,g)^\perp\cap\mathcal{D}(A)\big)\;&\subset\; \mathcal{K}(A,g)^\perp\,. 
    \end{split}
   \end{equation}
   \item[(ii)] The subspace
   \begin{equation}\label{eq:Krilov_intersection-generalised}
    \mathcal{I}(A,g)\;:=\;\overline{\mathcal{K}(A,g)}\,\cap\, A\big( \mathcal{K}(A,g)^\perp\cap\mathcal{D}(A)\big)
   \end{equation}
   is called the \emph{(generalised) Krylov intersection} for the given $A$ and $g$. 
  \end{itemize}
\end{defn}

As was the case for Definition \ref{def:Kreduc-Kintersec-bounded}, it is clear from \eqref{eq:Krylov-reducibility-generalised} and \eqref{eq:Krilov_intersection-generalised} that also in the generalised sense of Definition \ref{def:Kreduc-Kintersec-unbounded} Krylov reducibility implies triviality of the Krylov intersection.

\begin{rem}\label{rem:Kcore-implies-no-escape}
 The first condition of the two requirements \eqref{eq:Krylov-reducibility-generalised} for Krylov-re\-du\-ci\-bility is precisely the lack of Krylov escape \eqref{eq:AKinv}. Thus this condition is matched if, for instance, $A$ and $g$ satisfy the Krylov-core condition (Lemma \ref{lem:krycore-implies-noescape}).
\end{rem}

\begin{rem}
 Krylov reducibility in the generalised sense \eqref{eq:Krylov-reducibility-generalised} for an \emph{unbounded} operator differs from Krylov reducibility in the \emph{bounded} case, which is formulated as
   \begin{equation*}
    \begin{split}
     A\overline{\mathcal{K}(A,g)}\;&\subset\; \overline{\mathcal{K}(A,g)} \\
     A\mathcal{K}(A,g)^\perp\;&\subset\; \mathcal{K}(A,g)^\perp\,,
    \end{split}
   \end{equation*}
 in that when $A$ is bounded the subspaces $\overline{\mathcal{K}(A,g)}$ and $\mathcal{K}(A,g)^\perp$ are \emph{reducing} for $A$ and hence with respect to the Krylov decomposition $\cH= \overline{\mathcal{K}(A,g)}\oplus \mathcal{K}(A,g)^\perp$ the operator $A$ decomposes as $A=A|_{\overline{\mathcal{K}(A,g)}}\oplus A|_{\mathcal{K}(A,g)^\perp}$ whereas if $A$ is unbounded and Krylov-reduced it is false in general that $A=A|_{\overline{\mathcal{K}(A,g)}\cap\mathcal{D}(A)}\oplus A|_{\mathcal{K}(A,g)^\perp\cap\mathcal{D}(A)}$.
\end{rem}

% 
% It is clear from \eqref{eq:Krylov-reducibility-generalised} and \eqref{eq:Krilov_intersection-generalised} that generalised Krylov reducibility implies triviality of the generalised Krylov intersection, precisely as is the case for bounded operators.

\section{Krylov solvability in the general unbounded case}\label{sec:genunbdd}

In this section we examine counterparts of Krylov solvability of the inverse problem \eqref{eq:invLP} when the operator $A$ is densely defined and closed.

A crucial role in this matter turns out to be played by the lack of Krylov escape, namely the property \eqref{eq:AKinv}, a feature that was automatically present in the bounded case. A first example is the following technical Lemma that will be useful in a moment.

\begin{lem}\label{lem:fK_AKdenseK}
 For a given Hilbert space $\cH$ let $A$ be a densely defined and closed operator on $\cH$, let $g\in\mathrm{ran}(A)\cap C^\infty(A)$, and let $f\in\mathcal{D}(A)$ satisfy $Af=g$. If in addition the pair $(A,g)$ satisfies the Krylov-core condition and $f\in\overline{\mathcal{K}(A,g)}$, then
 \begin{equation}\label{eq:noKrylovEscape}
  \overline{A \mathcal{K}(A,g)}\;=\;\overline{\mathcal{K}(A,g)}\,.
 \end{equation} 
\end{lem}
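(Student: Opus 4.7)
The plan is to prove the two inclusions separately, noting that one of them is entirely free.

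First, the easy direction: since $g\in C^\infty(A)$, every basis vector $A^k g$ of $\mathcal{K}(A,g)$ lies in $\mathcal{D}(A)$ and satisfies $A(A^k g)=A^{k+1}g\in\mathcal{K}(A,g)$. By linearity this gives the elementary invariance $A\,\mathcal{K}(A,g)\subset\mathcal{K}(A,g)$, and taking closures yields $\overline{A\,\mathcal{K}(A,g)}\subset\overline{\mathcal{K}(A,g)}$. No hypothesis beyond $g\in C^\infty(A)$ is used here.

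For the reverse inclusion $\overline{\mathcal{K}(A,g)}\subset\overline{A\,\mathcal{K}(A,g)}$, I would argue that it suffices to show each generator $A^k g$ ($k\in\mathbb{N}_0$) belongs to $\overline{A\,\mathcal{K}(A,g)}$. For $k\geqslant 1$ this is immediate since $A^k g=A(A^{k-1}g)\in A\,\mathcal{K}(A,g)$. So the whole matter reduces to showing $g\in\overline{A\,\mathcal{K}(A,g)}$.

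This is precisely where both additional hypotheses enter. By assumption $f\in\mathcal{D}(A)\cap\overline{\mathcal{K}(A,g)}$, so $f$ lies in the domain of the restricted operator $A|_{\overline{\mathcal{K}(A,g)}\cap\mathcal{D}(A)}$. The Krylov-core condition \eqref{eq:Krylovcorecond} tells us that $\mathcal{K}(A,g)$ is a core for this restriction, i.e., dense in the graph norm. Hence there exists a sequence $(f_n)_{n\in\mathbb{N}}$ in $\mathcal{K}(A,g)$ with $f_n\to f$ and $Af_n\to Af=g$ in $\cH$. Since each $Af_n\in A\,\mathcal{K}(A,g)$, one concludes $g=\lim_n Af_n\in\overline{A\,\mathcal{K}(A,g)}$, which closes the argument.

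There is no real obstacle here: the proof is a direct unwrapping of the Krylov-core condition in graph-norm form, once one observes that the only generator needing attention is $g$ itself, and that $g=Af$ with $f$ in the graph-norm closure of $\mathcal{K}(A,g)$ by hypothesis. The only point one needs to be careful about is to phrase the reduction ``it suffices to show $g\in\overline{A\,\mathcal{K}(A,g)}$'' cleanly, since the higher powers are handled tautologically.
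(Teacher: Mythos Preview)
Your proof is correct and follows essentially the same route as the paper's: both reduce the nontrivial inclusion to showing $g\in\overline{A\,\mathcal{K}(A,g)}$, and both obtain this by invoking the Krylov-core condition to produce a sequence $(f_n)_n$ in $\mathcal{K}(A,g)$ with $f_n\to f$ in graph norm, so that $Af_n\to Af=g$. The paper's write-up is slightly more compressed but the argument is identical.
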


\noindent(We already observed in Remark \ref{rem:AKinK} that in general $\overline{A \mathcal{K}(A,g)}\varsubsetneq\overline{\mathcal{K}(A,g)}$.)

\begin{proof}[Proof of Lemma \ref{lem:fK_AKdenseK}]
 By assumption, $f\in \overline{\mathcal{K}(A,g)}\cap\mathcal{D}(A)$, which is the same as $f\in \overline{\mathcal{K}(A,g)}^{\| \|_A}$, owing to the Krylov-core condition. Thus there exists a sequence $(f_n)_{n\in\mathbb{N}}$ in $\mathcal{K}(A,g)$ such that $f_n\xrightarrow[]{\|\,\|_A}f$ as $n\to\infty$, in particular $Af_n\xrightarrow[]{\|\,\|}Af=g$, which implies that $g$ belongs to $\overline{A\mathcal{K}(A,g)}$. Clearly all vectors $Ag,A^2g,A^3g,\dots$ belong to the same space too. Therefore,
 \[
  \mathrm{span}\{A^kg\,|\,k\in\mathbb{N}_0\}\;\subset\;\overline{A\mathcal{K}(A,g)}\,,
 \]
 so that $\overline{\mathcal{K}(A,g)}\subset\overline{A\mathcal{K}(A,g)}$. The opposite inclusion is trivial, hence \eqref{eq:noKrylovEscape} follows. 
\end{proof}

For convenience we shall denote by $P_{\mathcal{K}}$ the orthogonal projection $P_{\mathcal{K}}:\cH\to\cH$ onto the Krylov subspace $\overline{\mathcal{K}(A,g)}$.

In the unbounded injective case, the triviality of the Krylov intersection still implies Krylov solvability under additional assumptions that are automatically satisfied if $A$ is bounded. The first requires that the orthogonal projection on $\overline{\mathcal{K}(A,g)}$ of the solution manifold $\mathcal{S}(A,g)$ is entirely contained in the domain of $A$. The other requirement is the lack of Krylov escape.

In fact, Proposition \ref{prop:KrylIntTriv_KrylSol} admits the following analogues.

\begin{prop}\label{prop:KrylIntTriv_KrylSol_unbdd}
  For a given Hilbert space $\cH$ let $A$ be a densely defined and closed operator on $\cH$, let $g\in\mathrm{ran}(A)\cap C^\infty(A)$, and let $f\in\mathcal{D}(A)$ satisfy $Af=g$. Assume furthermore that
  \begin{itemize}
   \item[(a)] $A$ is injective;
   \item[(b)] $A(\overline{\mathcal{K}(A,g)}\cap\mathcal{D}(A))\subset \overline{\mathcal{K}(A,g)}$ -- this assumption holds true, for example, if the pair $(A,g)$ satisfies the Krylov-core condition (Lemma \ref{lem:krycore-implies-noescape});
   \item[(c)] $P_{\mathcal{K}}f\in\mathcal{D}(A)$;
   \item[(d)] $\mathcal{I}(A,g)=\{0\}$,
  \end{itemize}
  or also, assume the more stringent assumptions
  \begin{itemize}
   \item[(a)] $A$ is injective;
   \item[(b')] $A$ is $\mathcal{K}(A,g)$-reduced
    \item[(c)] $P_{\mathcal{K}}f\in\mathcal{D}(A)$.
  \end{itemize}
 Under such assumptions, $f\in\overline{\mathcal{K}(A,g)}$.
\end{prop}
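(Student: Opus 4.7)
The plan is to carry over the scheme of Proposition \ref{prop:KrylIntTriv_KrylSol}(i) to the unbounded setting by orthogonal decomposition with respect to the Krylov subspace, using the lack of Krylov escape (b) to keep the image of $P_{\mathcal{K}}f$ inside $\overline{\mathcal{K}(A,g)}$ and the triviality of the Krylov intersection (d) to kill the orthogonal component after applying $A$. The hypotheses (b) and (c) are precisely what is needed to make both $P_{\mathcal{K}}f$ and $(\mathbbm{1}-P_{\mathcal{K}})f$ legitimate elements of $\mathcal{D}(A)$ and to control where $A$ sends them.

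First, I would split $f=f_1+f_2$ with $f_1:=P_{\mathcal{K}}f\in\overline{\mathcal{K}(A,g)}$ and $f_2:=(\mathbbm{1}-P_{\mathcal{K}})f\in\mathcal{K}(A,g)^\perp$. By (c) one has $f_1\in\mathcal{D}(A)$, and since $f\in\mathcal{D}(A)$ by hypothesis, also $f_2=f-f_1\in\mathcal{D}(A)$. Thus $f_1\in\overline{\mathcal{K}(A,g)}\cap\mathcal{D}(A)$ and $f_2\in\mathcal{K}(A,g)^\perp\cap\mathcal{D}(A)$, and assumption (b) (the lack of Krylov escape) yields $Af_1\in\overline{\mathcal{K}(A,g)}$. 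Since $g\in\mathcal{K}(A,g)\subset\overline{\mathcal{K}(A,g)}$, from $Af_1+Af_2=Af=g$ I obtain
\[
 Af_2\;=\;g-Af_1\;\in\;\overline{\mathcal{K}(A,g)}\,.
\]

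On the other hand, by construction $Af_2\in A\bigl(\mathcal{K}(A,g)^\perp\cap\mathcal{D}(A)\bigr)$, so combining with the previous line
\[
 Af_2\;\in\;\overline{\mathcal{K}(A,g)}\,\cap\,A\bigl(\mathcal{K}(A,g)^\perp\cap\mathcal{D}(A)\bigr)\;=\;\mathcal{I}(A,g)\,.
\]
Invoking (d) gives $Af_2=0$, and the injectivity (a) then forces $f_2=0$, i.e.\ $f=f_1=P_{\mathcal{K}}f\in\overline{\mathcal{K}(A,g)}$, as desired.

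For the second version of the statement, I would simply observe that the generalised Krylov-reducibility (b$'$) encodes both assumptions (b) and (d) at once: its first inclusion in \eqref{eq:Krylov-reducibility-generalised} is exactly (b), while the second inclusion forces $A(\mathcal{K}(A,g)^\perp\cap\mathcal{D}(A))\subset\mathcal{K}(A,g)^\perp$, so any vector in the intersection $\overline{\mathcal{K}(A,g)}\cap A(\mathcal{K}(A,g)^\perp\cap\mathcal{D}(A))$ lies simultaneously in $\overline{\mathcal{K}(A,g)}$ and in its orthogonal complement, hence vanishes. Thus $\mathcal{I}(A,g)=\{0\}$ and the previous argument applies verbatim. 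The only subtlety I expect — and really the sole place where unboundedness intervenes nontrivially — is the verification that $f_2\in\mathcal{D}(A)$, which is exactly what hypothesis (c) is tailored to guarantee; everything else is a transparent adaptation of the bounded proof.
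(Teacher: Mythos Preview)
Your proof is correct and follows essentially the same route as the paper's: decompose $f$ via $P_{\mathcal{K}}$, use (c) to place both pieces in $\mathcal{D}(A)$, (b) to keep $AP_{\mathcal{K}}f$ in $\overline{\mathcal{K}(A,g)}$, conclude $A(\mathbbm{1}-P_{\mathcal{K}})f\in\mathcal{I}(A,g)$, and finish with (d) and injectivity. Your explicit remark that (b$'$) entails both (b) and (d) is a welcome clarification that the paper leaves implicit.
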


\begin{proof} By assumption (c), $P_\mathcal{K}f\in\overline{\mathcal{K}(A,g)}\cap\mathcal{D}(A)$; by assumption (b), $AP_\mathcal{K}f\in\overline{\mathcal{K}(A,g)}$. Then $A(\mathbbm{1}-P_\mathcal{K})f=g-AP_\mathcal{K}f\in\overline{\mathcal{K}(A,g)}$. On the other hand, again by assumption (c), $(\mathbbm{1}-P_\mathcal{K})f\in\mathcal{D}(A)$, whence $A(\mathbbm{1}-P_\mathcal{K})f\in A( \mathcal{K}(A,g)^\perp\cap\mathcal{D}(A))$. Thus, $A(\mathbbm{1}-P_\mathcal{K})f\in\mathcal{I}(A,g)$. By assumptions (a) and (d), then $f=P_\mathcal{K}f$.
\end{proof}

\begin{prop}%\label{prop:KrylIntTriv_KrylSol_unbdd}
  For a given Hilbert space $\cH$ let $A$ be a densely defined and closed operator on $\cH$, let $g\in\mathrm{ran}(A)\cap C^\infty(A)$, and let $f\in\mathcal{D}(A)$ satisfy $Af=g$. Assume furthermore that
  \begin{itemize}
   \item[(a)] $A$ is invertible with everywhere defined, bounded inverse on $\cH$;
   \item[(b)] the pair $(A,g)$ satisfies the Krylov-core condition.
  \end{itemize}
 Under such assumptions, if $f\in\overline{\mathcal{K}(A,g)}$, then $\mathcal{I}(A,g)=\{0\}$.
\end{prop}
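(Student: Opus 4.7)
The plan is to pick an arbitrary $v$ in the generalised Krylov intersection $\mathcal{I}(A,g)$ and show $v=0$. By definition $v\in\overline{\mathcal{K}(A,g)}$ and $v=Aw$ for some $w\in\mathcal{K}(A,g)^\perp\cap\mathcal{D}(A)$; since $A$ is injective (a consequence of assumption (a)), proving $w=0$ is equivalent to proving $v=0$.

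First, I would invoke Lemma \ref{lem:fK_AKdenseK}: the hypothesis $f\in\overline{\mathcal{K}(A,g)}$ together with the Krylov-core condition gives the non-trivial identity
\[
 \overline{A\mathcal{K}(A,g)}\;=\;\overline{\mathcal{K}(A,g)}\,.
\]
This is the decisive upgrade that makes the argument work, because now every vector of $\overline{\mathcal{K}(A,g)}$, and in particular $v$, can be approximated in norm by vectors of the form $Au_n$ with $u_n\in\mathcal{K}(A,g)$.

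Next, I would exploit assumption (a). Since $A^{-1}\in\mathcal{B}(\cH)$, applying $A^{-1}$ to the convergent sequence $Au_n\to v$ yields $u_n=A^{-1}(Au_n)\to A^{-1}v=w$ in $\cH$. The sequence $(u_n)_{n\in\mathbb{N}}$ lies in $\mathcal{K}(A,g)\subset\overline{\mathcal{K}(A,g)}$, and the latter is closed, so $w\in\overline{\mathcal{K}(A,g)}$.

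Finally, I combine this with the standing condition $w\in\mathcal{K}(A,g)^\perp=\overline{\mathcal{K}(A,g)}^\perp$, obtaining
\[
 w\;\in\;\overline{\mathcal{K}(A,g)}\cap\overline{\mathcal{K}(A,g)}^\perp\;=\;\{0\}\,,
\]
whence $v=Aw=0$. Since $v\in\mathcal{I}(A,g)$ was arbitrary, $\mathcal{I}(A,g)=\{0\}$. The only real obstacle is the first step: without the Krylov-core condition one only has the inclusion $\overline{A\mathcal{K}(A,g)}\subset\overline{\mathcal{K}(A,g)}$, which may be strict (cf.\ Remark \ref{rem:AKinK}), and then the approximation $v\approx Au_n$ with $u_n\in\mathcal{K}(A,g)$ need not exist. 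Lemma \ref{lem:fK_AKdenseK} is precisely what removes this obstruction under the present hypotheses.
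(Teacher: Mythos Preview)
Your proof is correct and follows essentially the same route as the paper: invoke Lemma~\ref{lem:fK_AKdenseK} to obtain $\overline{A\mathcal{K}(A,g)}=\overline{\mathcal{K}(A,g)}$, approximate $v=Aw$ by $Au_n$ with $u_n\in\mathcal{K}(A,g)$, use boundedness of $A^{-1}$ to get $u_n\to w$, and conclude $w=0$ from the orthogonality $u_n\perp w$. The only cosmetic difference is that the paper finishes via the Pythagorean identity $\|u_n-w\|^2=\|u_n\|^2+\|w\|^2\to 0$, whereas you phrase the same conclusion as $w\in\overline{\mathcal{K}(A,g)}\cap\overline{\mathcal{K}(A,g)}^\perp$.
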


\begin{proof}
 Let $z\in\mathcal{I}(A,g)$. Then $z=Aw$ for some $w\in\mathcal{K}(A,g)^\perp\cap\mathcal{D}(A)$, and $z\in\overline{\mathcal{K}(A,g)}$. Owing to Lemma \ref{lem:fK_AKdenseK}, $\overline{\mathcal{K}(A,g)}=\overline{A\mathcal{K}(A,g)}$, hence there is a sequence $(v_n)_{n\in\mathbb{N}}$ in $\mathcal{K}(A,g)$ such that $Av_n\to z=Aw$ as $n\to\infty$. Then also $v_n\to w$, because $\|v_n-w\|_{\cH}\leqslant\|A^{-1}\|_{\mathrm{op}}\|Av_n-z\|_{\cH}$. Since $v_n\perp w$ for each $n$, then
 \[
  0\;=\;\lim_{n\to\infty}\|v_n-w\|_{\cH}^2\;=\;\lim_{n\to\infty}\big(\|v_n\|_{\cH}^2+\|w\|_{\cH}^2\big)\;=\;2\|w\|_{\cH}^2\,,
 \]
 whence $w=0$ and also $z=Aw=0$. 
\end{proof}

When $A$ is not injective, from the perspective of the Krylov-solvability of the inverse problem \eqref{eq:invLP} one immediately makes two observations. First, if $g=0$, then trivially $\mathcal{K}(A,g)=\{0\}$ and therefore the Krylov space does not capture any of the non-zero solutions to \eqref{eq:invLP}, which all belong to $\ker A$. Second, if $g\neq 0$ and therefore the problem of Krylov-solvability is non-trivial, it is natural to ask whether a Krylov solution exists and whether it is unique.

The following two Propositions provide answers to these questions.

\begin{prop}
  For a given Hilbert space $\cH$ let $A$ be a densely defined operator on $\cH$ and  let $g\in\mathrm{ran}(A)\cap C^\infty(A)$.  If $\ker A\subset\ker A^*$ (in particular, if $A$ is normal), then there exists at most one $f\in\overline{\mathcal{K}(A,g)}\cap\mathcal{D}(A)$ such that $Af=g$. 
%   
%   \textcolor{red}{DO WE REALLY NEED THE OPERATOR $A$ TO BE CLOSED HERE?}
%   \textcolor{blue}{I believe not, because we only require the condition $\ker A \subset \ker A^*$ and that $g \in \mathrm{ran} \cap C^\infty(A)$ so that $\mathcal{K}(A,g)$ is indeed well defined, both of which are not something necessarily dependent on the closure of the operator.} 
\end{prop}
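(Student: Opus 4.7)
The plan is to mirror the uniqueness argument already used in the proof of Theorem \ref{thm:self-skew}, suitably generalised. Suppose $f_1, f_2 \in \overline{\mathcal{K}(A,g)} \cap \mathcal{D}(A)$ are two solutions of $Af = g$. Then their difference $h := f_1 - f_2$ lies in $\ker A \cap \overline{\mathcal{K}(A,g)}$, and the task is to show $h = 0$ under the assumption $\ker A \subset \ker A^*$.

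The first step is to locate $h$ inside $\ker A^*$. Because $A$ is densely defined, $\ker A^* = (\mathrm{ran}\,A)^\perp$, so the hypothesis $\ker A \subset \ker A^*$ gives $h \in (\mathrm{ran}\,A)^\perp$.

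The second step is to observe that $\overline{\mathcal{K}(A,g)} \subset \overline{\mathrm{ran}\,A}$. Indeed $g \in \mathrm{ran}\,A$ by assumption, and for $k \geq 1$ the $A$-smoothness of $g$ ensures $A^{k-1}g \in \mathcal{D}(A)$, hence $A^k g = A(A^{k-1}g) \in \mathrm{ran}\,A$. Thus every generator of $\mathcal{K}(A,g)$ lies in $\mathrm{ran}\,A$, and taking norm-closure gives the inclusion.

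Combining the two steps, $h \in \overline{\mathrm{ran}\,A} \cap (\mathrm{ran}\,A)^\perp = \overline{\mathrm{ran}\,A} \cap (\overline{\mathrm{ran}\,A})^\perp = \{0\}$, so $f_1 = f_2$. The normality case is then automatic, since for a normal (densely defined, closed) operator one has the well-known identity $\ker A = \ker A^*$. There is no real obstacle here, as the argument uses only elementary Hilbert-space duality between $\ker A^*$ and $\mathrm{ran}\,A$ together with the trivial fact $\mathcal{K}(A,g) \subset \mathrm{ran}\,A$; the only point worth checking carefully is that $A$-smoothness of $g$ is exactly what is needed to conclude $A^k g \in \mathrm{ran}\,A$ for every $k \geq 0$.
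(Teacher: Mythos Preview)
Your proposal is correct and follows essentially the same approach as the paper's own proof: the paper also takes two solutions $f_1,f_2\in\overline{\mathcal{K}(A,g)}$, places $f_1-f_2$ in $\ker A\cap\overline{\mathcal{K}(A,g)}\subset\ker A^*\cap\overline{\mathrm{ran}\,A}=\{0\}$, and explicitly refers back to the uniqueness argument of Theorem~\ref{thm:self-skew}. Your write-up simply spells out in more detail the two facts the paper calls ``obvious'', namely $\ker A^*=(\mathrm{ran}\,A)^\perp$ and $\mathcal{K}(A,g)\subset\mathrm{ran}\,A$.
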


 \begin{proof}
  The argument is analogous to that used for the corresponding statement of Theorem \ref{thm:self-skew}.
  If $f_1,f_2\in\overline{\mathcal{K}(A,g)}$ and $Af_1=g=Af_2$, then $f_1-f_2\in\ker A\cap \overline{\mathcal{K}(A,g)}$. By assumption, $\ker A\subset\ker A^*$, and  moreover obviously $\overline{\mathcal{K}(A,g)}\subset\overline{\mathrm{ran}A}$. Therefore, $f_1-f_2\in\ker A^*\cap \overline{\mathrm{ran}A}=\{0\}$, whence $f_1=f_2$.
 \end{proof}

 \begin{prop}\label{prop:existence_Krylov_sol}
  For a given Hilbert space $\cH$ let $A$ be a densely defined and closed operator on $\cH$, let $g\in\mathrm{ran}(A)\cap C^\infty(A)$, and let $f\in\mathcal{D}(A)$ satisfy $Af=g$. Assume furthermore that
  \begin{itemize}
   \item[(a)] $A(\overline{\mathcal{K}(A,g)}\cap\mathcal{D}(A))\subset \overline{\mathcal{K}(A,g)}$;
   \item[(b)] $P_{\mathcal{K}}f\in\mathcal{D}(A)$;
   %$P_{\mathcal{K}}$ maps the solution manifold $\mathcal{S}(A,g)$ into ${D}(A)$;
   \item[(c)] $\mathcal{I}(A,g)=\{0\}$,
  \end{itemize}
  or also, assume the more stringent assumptions
  \begin{itemize}
   \item[(a')] $A$ is $\mathcal{K}(A,g)$-reduced
    \item[(b')] $P_{\mathcal{K}}f\in\mathcal{D}(A)$.
  \end{itemize}
 Then there exists $f_\circ\in\overline{\mathcal{K}(A,g)}\cap\mathcal{D}(A)$ such that $Af_\circ=g$.  
% 
% \textcolor{red}{I wonder if the conditions (b) and (b') are a bit strict? Wouldn't it suffice to have the existence of at least one vector $f \in \mathcal{S}(A,g)$ that does the job?}
% 
% \textcolor{red}{On the other hand, when $A = A^*$ condition (b) (resp. (b')), merely the existence of some $f \in \mathcal{S}(A,g)$ such that $P_\mathcal{K}f \in \overline{\mathcal{K}(A,g)} \cap \mathcal{D}(A)$ (resp. $P_\mathcal{K}f \in \mathcal{D}(A)$) would give the same condition, as $\mathcal{S}(A,g) = \{f\} + \ker A$ and $\ker A = \mathrm{ran} A^\perp$. I don't know if this would be worth mentioning separately, or whether it's immediately obvious and therefore needs no further remark.}
\end{prop}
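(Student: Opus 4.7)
The natural candidate is $f_\circ:=P_{\mathcal{K}}f$, the Krylov-projection of any fixed solution $f$. The whole plan is essentially to re-run the argument of Proposition \ref{prop:KrylIntTriv_KrylSol_unbdd} but, without injectivity at disposal, to content ourselves with producing a (possibly different) Krylov solution rather than identifying $f$ with $f_\circ$.

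First I would check that $f_\circ$ lies in the prescribed space. By assumption (b), $P_{\mathcal{K}}f\in\mathcal{D}(A)$, and by construction $P_{\mathcal{K}}f\in\overline{\mathcal{K}(A,g)}$, so $f_\circ\in\overline{\mathcal{K}(A,g)}\cap\mathcal{D}(A)$. Consequently also $(\mathbbm{1}-P_{\mathcal{K}})f=f-f_\circ\in\mathcal{D}(A)$, and of course $(\mathbbm{1}-P_{\mathcal{K}})f\in\mathcal{K}(A,g)^\perp$, so
\[
 A(\mathbbm{1}-P_{\mathcal{K}})f\;\in\; A\bigl(\mathcal{K}(A,g)^\perp\cap\mathcal{D}(A)\bigr)\,.
\]

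Next, I would identify this vector inside the Krylov subspace. Linearity gives
\[
 A(\mathbbm{1}-P_{\mathcal{K}})f\;=\;Af-Af_\circ\;=\;g-Af_\circ\,.
\]
Now assumption (a) (lack of Krylov escape) applied to $f_\circ\in\overline{\mathcal{K}(A,g)}\cap\mathcal{D}(A)$ gives $Af_\circ\in\overline{\mathcal{K}(A,g)}$, and $g\in\mathcal{K}(A,g)\subset\overline{\mathcal{K}(A,g)}$, so $g-Af_\circ\in\overline{\mathcal{K}(A,g)}$. Combining the two observations,
\[
 A(\mathbbm{1}-P_{\mathcal{K}})f\;\in\;\overline{\mathcal{K}(A,g)}\cap A\bigl(\mathcal{K}(A,g)^\perp\cap\mathcal{D}(A)\bigr)\;=\;\mathcal{I}(A,g)\,,
\]
which is $\{0\}$ by assumption (c). Therefore $Af_\circ=g$, establishing the existence claim under the first set of hypotheses.

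For the second, more stringent set of hypotheses (a')--(b'), I would simply invoke Remark \ref{rem:Kcore-implies-no-escape} and the observation right after Definition \ref{def:Kreduc-Kintersec-unbounded}: Krylov-reducibility in the generalised sense gives at once both the invariance condition (a) (its first line is literally (a)) and the triviality of the generalised Krylov intersection (c), so the preceding argument applies and yields $f_\circ\in\overline{\mathcal{K}(A,g)}\cap\mathcal{D}(A)$ with $Af_\circ=g$. There is no real obstacle here beyond bookkeeping: the crucial use of closedness is already absorbed into the lack of Krylov escape, and domain issues are sidestepped by assumption (b)/(b') which ensures that both pieces $P_{\mathcal{K}}f$ and $(\mathbbm{1}-P_{\mathcal{K}})f$ of the decomposition of $f$ lie in $\mathcal{D}(A)$ separately.
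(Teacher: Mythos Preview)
Your proof is correct and follows essentially the same route as the paper: set $f_\circ:=P_{\mathcal{K}}f$, use (b) to place both $P_{\mathcal{K}}f$ and $(\mathbbm{1}-P_{\mathcal{K}})f$ in $\mathcal{D}(A)$, use (a) to get $g-Af_\circ\in\overline{\mathcal{K}(A,g)}$, conclude $A(\mathbbm{1}-P_{\mathcal{K}})f\in\mathcal{I}(A,g)=\{0\}$ via (c), and reduce (a')--(b') to (a)--(c) via Krylov reducibility. The paper adds only a brief closing remark distinguishing the cases $g=0$ and $g\neq 0$, which is inessential to the existence claim.
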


  \begin{proof} Let $f$ be a solution to $Af=g$ (it certainly exists, for $\mathcal{S}(A,g)$ is non-empty). Reasoning as in the proof of Proposition \ref{prop:KrylIntTriv_KrylSol_unbdd}: $P_\mathcal{K}f\in\overline{\mathcal{K}(A,g)}\cap\mathcal{D}(A)$ (by (b)), $AP_\mathcal{K}f\in\overline{\mathcal{K}(A,g)}$ (by (a)), $(\mathbbm{1}-P_\mathcal{K})f\in\mathcal{D}(A)$ (by (b)), whence $A(\mathbbm{1}-P_\mathcal{K})f\in A( \mathcal{K}(A,g)^\perp\cap\mathcal{D}(A))$ and also $A(\mathbbm{1}-P_\mathcal{K})f=g-AP_\mathcal{K}f\in\overline{\mathcal{K}(A,g)}$. Thus, $A(\mathbbm{1}-P_\mathcal{K})f\in\mathcal{I}(A,g)$, whence $g=Af=AP_\mathcal{K}f$ (by (c)). Set
  \[
   f_\circ\;:=\;P_\mathcal{K}f\;\in\;\overline{\mathcal{K}(A,g)}\cap\mathcal{D}(A)\,.
  \]
  Now, if $g=0$, then $\overline{\mathcal{K}(A,g)}=\{0\}$, whence $f_\circ=0$: this (trivial) solution to the corresponding inverse problem is indeed a Krylov-solution. If instead $g\neq 0$, then necessarily $f_\circ\neq 0$. In either case $Af_\circ=g$.  
\end{proof}

\section{The self-adjoint case revisited: structural properties.}\label{sec:selfadj_revisited}

We return in this section to the general question of Krylov-solvability for an inverse problem of the form $Af=g$ when $A=A^*$.

More precisely, whereas the analysis of section~\ref{sec:selfadj_skewadj} (Theorem \ref{thm:self-skew}) already provides an affirmative answer, based on conjugate gradient arguments, it does not explain how the operator $A$ and the datum $g$ behave in the self-adjoint case with respect to the abstract operator-theoretic mechanisms for Krylov-solvability identified in section~\ref{sec:genunbdd} (Krylov-reducibility, triviality of the Krylov intersection, stability of the solution manifold inside $\mathcal{D}(A)$ under the projection $P_\mathcal{K}$).

Of course it is straightforward to observe that if $A=A^*$ and $g\in C^\infty(A)$, then the second of the two conditions \eqref{eq:Krylov-reducibility-generalised} for Krylov reducibility is automatically true (owing to \eqref{eq:Astarinv}) and therefore $\mathcal{I}(A,g)$ is always trivial.

Unlike the bounded case, however, in order for $A$ to be $\mathcal{K}(A,g)$-reduced no Krylov escape must occur, namely $A$ has to match also the first of the two conditions \eqref{eq:Krylov-reducibility-generalised}, and we have already observed (Remark \ref{rem:Kcore-implies-no-escape}) that an assumption on $A$ and $g$ such as the Krylov-core condition would indeed prevent the Krylov escape phenomenon.

As relevant such issues are, from a more abstract perspective, to understand why `structurally' a self-adjoint inverse problem is Krylov-solvable, and as deceptively simple the underlying mathematical questions appear, to our knowledge no complete answer is available in the affirmative (i.e., a proof) or in the negative (a counterexample) to the following questions:
\begin{itemize}
 \item[(\textbf{Q1})] When $A=A^*$ and $g\in C^\infty(A)$, is it true that $\overline{\mathcal{K}(A,g)}^{\|\,\|_A}=\overline{\mathcal{K}(A,g)}\cap\mathcal{D}(A)$, i.e., is the Krylov core condition satisfied by the pair  $(A,g)$ ?
  \item[(\textbf{Q2})] When $A=A^*$ and $g\in C^\infty(A)$, is it true that $ A\big( \overline{\mathcal{K}(A,g)}\cap\mathcal{D}(A)\big)\subset\overline{\mathcal{K}(A,g)}$, i.e., is $A$ $\mathcal{K}(A,g)$-reduced in the generalised sense?
\end{itemize}
(Clearly in (\textbf{Q2}) it is tacitly understood that $\mathcal{K}(A,g)$ is not dense in $\cH$.)

We can provide a partial answer in a vast class of cases, namely whenever the vector $g$ is `bounded' for $A$.

Let us recall (see, e.g., \cite[Sect.~7.4]{schmu_unbdd_sa}) that a $C^\infty$-vector $g\in\cH$ for a linear operator $A$ on a Hilbert space $\cH$ is \emph{bounded} for $A$ when there is a constant $B_g>0$ such that 
\begin{equation}\label{eq:boundedvectors}
 \|A^n g\|_{\cH}\;\leqslant\;B_g^n\qquad\forall n\in\mathbb{N}_0\,.
\end{equation}
As well-known (see, e.g., \cite[Lemma 7.13]{schmu_unbdd_sa}), the vector space of bounded vectors for $A$, when $A$ is self-adjoint, is dense in $\cH$.

\begin{thm}\label{thm:krylov-core_analytic}
 For a given Hilbert space $\cH$ let $A$ be a self-adjoint operator on $\cH$ and let $g\in\cH$ be a bounded vector for $A$. Then the pair $(A,g)$ satisfies the Krylov-core condition and consequently $A$ is $\mathcal{K}(A,g)$-reduced in the generalised sense of Definition \ref{def:Kreduc-Kintersec-unbounded}. 
\end{thm}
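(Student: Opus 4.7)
The plan is to exploit the spectral theorem to reduce the statement, on the cyclic subspace $\overline{\mathcal{K}(A,g)}$, to a situation in which $A$ behaves as a bounded operator. The key observation is that for a bounded vector $g$ of a self-adjoint $A$, the scalar spectral measure $\mu_g^{(A)}$ has compact support. Indeed, by the functional calculus one has
\[
 \int_{\mathbb{R}}\lambda^{2n}\,\ud\mu_g^{(A)}(\lambda)\;=\;\|A^n g\|_{\cH}^2\;\leqslant\;B_g^{2n}\qquad\forall n\in\mathbb{N}_0,
\]
and the usual argument (take $n$-th roots and let $n\to\infty$, or test against the indicator of $\{|\lambda|>B_g\}$) shows $\mathrm{supp}\,\mu_g^{(A)}\subset[-B_g,B_g]$.

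Next I would pass to the spectral/cyclic representation restricted to $\overline{\mathcal{K}(A,g)}$: this subspace is naturally unitarily isomorphic to $L^2(\mathbb{R},\ud\mu_g^{(A)})$, with $g$ corresponding to the constant function $\mathbf{1}$ and $A$ corresponding to multiplication by $\lambda$. Because $\mathrm{supp}\,\mu_g^{(A)}\subset[-B_g,B_g]$, the multiplication operator by $\lambda$ is everywhere defined and bounded on this $L^2$-space, with operator norm at most $B_g$. Transporting back to $\cH$, this means
\[
 \overline{\mathcal{K}(A,g)}\;\subset\;\mathcal{D}(A)\qquad\text{and}\qquad \|Ah\|_{\cH}\;\leqslant\;B_g\|h\|_{\cH}\quad\forall h\in\overline{\mathcal{K}(A,g)}.
\]
In particular $\overline{\mathcal{K}(A,g)}\cap\mathcal{D}(A)=\overline{\mathcal{K}(A,g)}$, and on this subspace the graph norm $\|\cdot\|_A$ is equivalent to the Hilbert norm, since $\|h\|_A^2\leqslant(1+B_g^2)\|h\|_{\cH}^2$.

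From this, the Krylov-core condition is essentially immediate: by definition $\mathcal{K}(A,g)$ is $\|\cdot\|_{\cH}$-dense in $\overline{\mathcal{K}(A,g)}$, and by the equivalence of norms just established it is also $\|\cdot\|_A$-dense there. Combined with the trivial inclusion \eqref{eq:Krylovcorecond2-oneinclusion} of Remark \ref{rem:one-inclusion-is-trivial}, this yields \eqref{eq:Krylovcorecond2}. Finally, for Krylov-reducibility in the sense of \eqref{eq:Krylov-reducibility-generalised}: the first inclusion follows from the Krylov-core condition via Lemma \ref{lem:krycore-implies-noescape} (Remark \ref{rem:Kcore-implies-no-escape}), and the second inclusion is automatic because $A^*=A$ and so it reduces to \eqref{eq:Astarinv}. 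The only subtlety worth double-checking in the write-up is the spectral-measure step (that $\|A^n g\|\leqslant B_g^n$ truly forces compact support of $\mu_g^{(A)}$); everything else is routine once that is in place.
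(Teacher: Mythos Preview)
Your proof is correct and takes a genuinely different, shorter route than the paper's. The decisive observation you make---and the paper does not exploit---is that the bounded-vector hypothesis forces $\mu_g^{(A)}$ to have support in $[-B_g,B_g]$, so that $g=E^{(A)}([-B_g,B_g])g$ and hence all of $\overline{\mathcal{K}(A,g)}$ lies inside the range of this spectral projection; there $A$ acts as a bounded operator of norm at most $B_g$, the graph norm and the Hilbert norm are equivalent on $\overline{\mathcal{K}(A,g)}=\overline{\mathcal{K}(A,g)}\cap\mathcal{D}(A)$, and the Krylov-core condition becomes immediate. The paper instead works directly with approximants: given $x\in\overline{\mathcal{K}(A,g)}\cap\mathcal{D}(A)$ with $p_n(A)g\to x$ in $\cH$, it first regularises via $e^{-A^2/(nB_g)}p_n(A)g$ to upgrade the convergence to the graph norm, and then replaces the exponential by a suitably high-order Taylor polynomial (using \eqref{eq:boundedvectors} to control the remainder) so as to land back in $\mathcal{K}(A,g)$. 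Your approach is more conceptual and in fact also makes the paper's subsequent Theorem~\ref{thm:Krylov_isomorphism} transparent. One presentational point: the identification $\overline{\mathcal{K}(A,g)}\cong L^2(\mathbb{R},\ud\mu_g^{(A)})$ that you invoke as ``natural'' is exactly the content of Theorem~\ref{thm:Krylov_isomorphism} and is not literally automatic from the spectral theorem (the standard cyclic-subspace model identifies $L^2(\ud\mu_g^{(A)})$ with the closure of $\{f(A)g:f\text{ bounded Borel}\}$, and one needs compact support plus Weierstrass to see that polynomials are already dense there). But for your purposes you only need the inclusion $\overline{\mathcal{K}(A,g)}\subset\mathrm{ran}\,E^{(A)}([-B_g,B_g])$, which is immediate once $\mathrm{supp}\,\mu_g^{(A)}\subset[-B_g,B_g]$; so this is a matter of write-up, not a gap.
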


 As observed already (Remark \ref{rem:one-inclusion-is-trivial}), the thesis follows from the inclusion
 \begin{equation}
  \overline{\mathcal{K}(A,g)}^{\|\,\|_A}\;\supset\;\overline{\mathcal{K}(A,g)}\cap\mathcal{D}(A)
 \end{equation}
  that we shall prove now.
 
\begin{proof}[Proof of Theorem \ref{thm:krylov-core_analytic}]
 Let $x\in\overline{\mathcal{K}(A,g)}\cap\mathcal{D}(A)$. We need to exhibit a sequence $(x_n)_{n\in\mathbb{N}}$ in $\mathcal{K}(A,g)$ such that $x_n\xrightarrow[]{\;\|\,\|_A\;}x$ as $n\to\infty$. Since $x\in\overline{\mathcal{K}(A,g)}$, there surely exists a sequence $(p_n(A)g)_{n\in\mathbb{N}}$, for some polynomials $p_n$, that converges to $x$  in the $\cH$-norm, although a priori not in the stronger $\|\,\|_A$-norm.

 First we show that one can refine and `regularise' such a sequence with respect to the action of $A$ in order to strengthen the convergence. Explicitly, we show that up to taking a subsequence of the $p_n$'s, one has
 \[\tag{i}
  e^{-A^2/(nB_g)}p_n(A)g\;\xrightarrow[n\to\infty]{\;\|\,\|_A\;}\;x\,,
 \]
 where $B_g$ is the constant of \eqref{eq:boundedvectors}.
 
 To this aim, we split
 \[
  \begin{split}
   \big\|x- e^{-A^2/(nB_g)}p_n(A)g\big\|_A\;&\leqslant\;\big\|x- e^{-A^2/(nB_g)}x\big\|_A \\
   &\qquad +\big\|e^{-A^2/(nB_g)}(x-p_n(A)g)\big\|_A
  \end{split}
 \]
 and observe that by dominated convergence
  \[
   \big\|x- e^{-A^2/(nB_g)}x\big\|_A^2\;=\;\int_\mathbb{R}(1 + \lambda^2)\,|1-e^{-\lambda^2/(nB_g)}|^2\,\ud\mu_{x}^{(A)}(\lambda)\;\xrightarrow[]{n\to\infty}\;0\,,
  \]
 where $\mu_{x}^{(A)}$ is the scalar spectral measure for $A$ relative to the vector $x$. A suitable integrable majorant for the dominated convergence argument is $4(1 +\lambda^2)$ and for that we used the assumption that $x\in\mathcal{D}(A)$. As for the second summand, one has
 \[
  \begin{split}
   \big\|e^{-A^2/(nB_g)}(x-p_n(A)g)\big\|_A^2\;&=\;\big\|(\mathbbm{1}+A^2)^{\frac{1}{2}}e^{-A^2/(nB_g)}(x-p_n(A)g)\big\|_{\cH}^2 \\
   &\leqslant\;\big\|(\mathbbm{1}+A^2)^{\frac{1}{2}}e^{-A^2/(nB_g)}\big\|_{\mathrm{op}}^2\,\|x-p_n(A)g\|_{\cH}^2\,.
  \end{split}
 \]
By assumption $\|x-p_n(A)g\|_{\cH}\xrightarrow{n\to\infty} 0$, whereas $\|(\mathbbm{1}+A^2)^{\frac{1}{2}}e^{-A^2/(nB_g)}\|_{\mathrm{op}}$ $=\|(1+\lambda)^{1/2}e^{-\lambda^2/(nB_g)}\|_{L^\infty(\mathbb{R}^+)} < + \infty$ diverges in the limit if $A$ is unbounded. Up to passing to a convenient subsequence of the $p_n$'s, which we shall denote again by $(p_n)_{n\in\mathbb{N}}$, it is always possible to make the vanishing of $\|x-p_n(A)g\|_{\cH}$ sufficiently fast so as to compensate the divergence of $\|(\mathbbm{1}+A^2)^{\frac{1}{2}}e^{-A^2/(nB_g)}\|_{\mathrm{op}}$ and make their product eventually vanish.

 Clearly the new sequence $(e^{-A^2/(nB_g)}p_n(A)g)_{n\in\mathbb{N}}$ has the expected $\|\,\|_A$-con\-ver\-gence to $x$, but a priori does not belong to $\mathcal{K}(A,g)$ any longer. Now we claim that there exists a monotone increasing sequence of integers $(N_n)_{n\in\mathbb{N}}$, with $N_n\to\infty$ as $n\to\infty$, such that the vectors
 \[
  x_n\;:=\;\sum_{k=0}^{N_n}\frac{(-1)^k}{\,k!(nB_g)^k}\, A^{2k}\, p_n(A)\,g\;\in\;\mathcal{K}(A,g)
 \]
 satisfy
 \[\tag{ii}
  \big\|e^{-A^2/(nB_g)}p_n(A)g-x_n\big\|_A\;\xrightarrow[]{n\to\infty}\;0
 \]
 whence also, combining (i) and (ii), the thesis $x_n\xrightarrow[]{\;\|\,\|_A\;}x$.

 To prove the claim, for each $n\in\mathbb{N}$ and $\lambda\in\mathbb{R}$ we use the notation
 \[
  p_n(\lambda)\;=\;\sum_{\ell=0}^{D_n} a_\ell^{(n)} \lambda^\ell\,,
 \]
 where $a_\ell^{(n)}\in\mathbb{C}$ and $D_n\in\mathbb{N}$ with $a_{D_n}^{(n)}\neq 0$ (meaning that $\mathrm{deg}\,p_n=D_n$). Let us focus on the second summand in the r.h.s.~of the identity
 \[
 \begin{split}
  \big\|e^{-A^2/(nB_g)}p_n(A)g-x_n\big\|_A^2\;&=\;\big\|e^{-A^2/(nB_g)}p_n(A)g-x_n\big\|_{\cH}^2 \\
  &\qquad +\big\|A(e^{-A^2/(nB_g)}p_n(A)g-x_n)\big\|_{\cH}^2
 \end{split}
 \]
 (the argument for the first summand is the very same), and let us re-write
 \[
  A\big(e^{-A^2/(nB_g)}p_n(A)g-x_n\big)\;=\;\sum_{\ell=0}^{D_n}a_\ell^{(n)}\Big(e^{-A^2/(nB_g)}-\sum_{k=0}^{N_n}\frac{(-1)^k}{k!}\Big(\frac{A^2}{nB_g}\Big)^k \Big)   A^{\ell+1}g
  \]
  for some $N_n$ to be determined.
  
 Thus,
 \[
  \begin{split}
   \big\|A\big(&e^{-A^2/(nB_g)}p_n(A)g-x_n\big)\big\|_{\cH} \\
   &\leqslant\;\sum_{\ell=0}^{D_n}\big|a_\ell^{(n)}\big|\,\Big\|\Big(e^{-A^2/(nB_g)}-\sum_{k=0}^{N_n}\frac{(-1)^k}{k!}\Big(\frac{A^2}{nB_g}\Big)^k \Big)   A^{\ell+1}g\Big\|_{\cH}\,.
  \end{split}
 \]

  Now, for each $\ell$ one has 
  \[
   \begin{split}
    \Big\|\Big(e^{-A^2/(nB_g)}&-\sum_{k=0}^{N_n}\frac{(-1)^k}{k!}\Big(\frac{A^2}{nB_g}\Big)^k \Big)   A^{\ell+1}g\Big\|_{\cH}^2 \\
    &=\;\int_{\mathbb{R}}\Big| e^{-\lambda^2/(nB_g)}-\sum_{k=0}^{N_n}\frac{(-1)^k}{k!}\Big(\frac{\lambda^2}{nB_g}\Big)^k \Big|^2   \lambda^{2(\ell+1)}\,\ud\mu_{g}^{(A)}(\lambda) \\
    &=\;\int_{\mathbb{R}}\Big| \sum_{k=N_n+1}^{\infty}\frac{(-1)^k}{k!}\Big(\frac{\lambda^2}{nB_g}\Big)^k \Big|^2   \,\lambda^{2(\ell+1)}\,\ud\mu_{g}^{(A)}(\lambda) \\
    &\leqslant\;\int_{\mathbb{R}}\Big|\frac{1}{(N_n+1)!}\Big(\frac{\lambda^2}{nB_g}\Big)^{\!N_n+1}\Big|^2\,\lambda^{2(\ell+1)}\,\ud\mu_{g}^{(A)}(\lambda) \\
    &=\;\frac{1}{((N_n+1)!\,(nB_g)^{N_n+1})^2}\,\|A^{N_n+\ell+2}g\|_{\cH}^2 \\
    &\leqslant\;\Big(\frac{B_g^{\ell+1}}{(N_n+1)!\,n^{N_n+1}}\Big)^2\,,
   \end{split}
  \]
  where the last inequality above is due to assumption \eqref{eq:boundedvectors}, whereas the previous inequality follows by a standard estimate of the remainder in Taylor's formula.

  Combining the last two estimates one finds
  \[
    \big\|A\big(e^{-A^2/(nB_g)}p_n(A)g-x_n\big)\big\|_{\cH}\;\leqslant\;\sum_{\ell=0}^{D_n}\big|a_\ell^{(n)}\big|\,\frac{B_g^{\ell+1}}{(N_n+1)!\,n^{N_n+1}}\,.
  \]
  Each $\ell$-th term of the sum above depends on $n$. We can argue that there is a suitable sequence $(N_n)_{n\in\mathbb{N}}$ in $\mathbb{N}$, with $N_n\to \infty$ as $n\to\infty$, such that for every $\ell\in\mathbb{N}_0$ and for every $n\in\mathbb{N}$ with $n\geqslant \max\{1,B_g\}$ one has
  \[
   \big|a_\ell^{(n)}\big|\,\frac{B_g^{\ell+1}}{(N_n+1)!\,n^{N_n+1}}\;\leqslant\;\frac{1}{n\,(\ell+1)^2}\,.
  \]
  Indeed, let $(N_n)_{n\in\mathbb{N}}$ be a generic monotone increasing and divergent sequence of natural numbers and let us refine it as follows. First, let us take a subsequence, for convenience denoted again by $(N_n)_{n\in\mathbb{N}}$, such that $N_n\geqslant D_n$. Thus,
  \[
    \big|a_\ell^{(n)}\big|\,\frac{B_g^{\ell+1}}{(N_n+1)!\,n^{N_n+1}}\;\leqslant\;\frac{\big|a_\ell^{(n)}\big|}{(N_n+1)!}\qquad\textrm{for }n\geqslant\max\{1,B_g\}\,.
  \]
  Next, for each fixed $\ell\in\mathbb{N}_0$, proceeding from $\ell=0$ and increasing $\ell$ by one at each step, let us further refine $(N_n)_{n\in\mathbb{N}}$ to a subsequence, renamed $(N_n)_{n\in\mathbb{N}}$ again, such that $N_n\geqslant n(\ell+1)^2|a_\ell^{(n)}|-1$. Then
  \[
   \frac{\big|a_\ell^{(n)}\big|}{(N_n+1)!}\;\leqslant\;\frac{\big|a_\ell^{(n)}\big|}{N_n+1}\;\leqslant\;\frac{1}{n(\ell+1)^2}\qquad\textrm{for }n\geqslant\max\{1,B_g\}\,.
  \]
  The above procedure amounts to a countable number of refinements of the original sequence $(N_n)_{n\in\mathbb{N}}$, and thus produces a final subsequence, still denoted by $(N_n)_{n\in\mathbb{N}}$, with the desired property, and for which therefore (for $n\geqslant\max\{1,B_g\}$)
   \[
   \begin{split}
      \big\|A\big(e^{-A^2/(nB_g)}p_n(A)g-x_n\big)\big\|_{\cH}\;&\leqslant\;\sum_{\ell=0}^{D_n}\frac{1}{n(\ell+1)^2}\;\leqslant\;\frac{1}{n}\sum_{\ell=0}^\infty\frac{1}{\,(\ell+1)^2} \\
      &=\;\frac{\,\pi^2}{6n}\;\xrightarrow[]{\;n\to\infty\;}0\,.
   \end{split}
  \]

  This establishes (ii) and concludes the proof. 
\end{proof}

What seems to suggest a special relevance of the assumption that $g$ be a bounded vector for $A$ is the fact that not only under such assumption is the conclusion of Theorem \ref{thm:krylov-core_analytic} valid, but also the corresponding Krylov space is naturally isomorphic to $L^2(\mathbb{R},\ud\mu_g^{(A)})$.

\begin{thm}\label{thm:Krylov_isomorphism}
 For a given Hilbert space $\cH$ let $A$ be a self-adjoint operator on $\cH$ and let $g\in\cH$ be a bounded vector for $A$. Then $L^2(\mathbb{R},\ud\mu_g^{(A)})\cong\overline{\mathcal{K}(A,g)}$ via the isomorphism $h\mapsto h(A)g$.
\end{thm}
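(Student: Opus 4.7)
The plan is to invoke the standard spectral-theoretic isomorphism between the cyclic subspace generated by $g$ and $L^2(\mathbb{R},\ud\mu_g^{(A)})$, and then to exploit the bounded-vector hypothesis to check that this cyclic subspace coincides with $\overline{\mathcal{K}(A,g)}$.

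First I would define a map $U:\mathcal{K}(A,g)\to L^2(\mathbb{R},\ud\mu_g^{(A)})$ on the (not-yet-closed) Krylov subspace by sending $p(A)g\mapsto p$ for every polynomial $p$, where $p$ is viewed as an equivalence class in $L^2$. The spectral calculus identity
\begin{equation*}
  \|p(A)g\|_{\cH}^2\;=\;\int_{\mathbb{R}}|p(\lambda)|^2\,\ud\mu_g^{(A)}(\lambda)
\end{equation*}
shows that $U$ is well defined and isometric, hence extends by continuity to an isometry $\widetilde{U}:\overline{\mathcal{K}(A,g)}\to L^2(\mathbb{R},\ud\mu_g^{(A)})$ whose range is precisely the $L^2$-closure of the subspace of polynomials.

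The next step is to show that the boundedness of $g$ for $A$ forces $\mu_g^{(A)}$ to have compact support in $[-B_g,B_g]$. Indeed, the identity $\int_{\mathbb{R}}\lambda^{2n}\,\ud\mu_g^{(A)}(\lambda)=\|A^ng\|_{\cH}^2\leqslant B_g^{2n}$ from \eqref{eq:boundedvectors}, together with the trivial lower bound $\int_{\{|\lambda|\geqslant B_g+\varepsilon\}}\lambda^{2n}\,\ud\mu_g^{(A)}\geqslant (B_g+\varepsilon)^{2n}\mu_g^{(A)}(\{|\lambda|\geqslant B_g+\varepsilon\})$, rules out any mass outside $[-B_g,B_g]$ by letting $n\to\infty$.

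With the support of $\mu_g^{(A)}$ compact, polynomials are dense in $L^2(\mathbb{R},\ud\mu_g^{(A)})$: by the Weierstrass approximation theorem they are uniformly dense in $C([-B_g,B_g])$, which in turn is $L^2$-dense with respect to any finite regular Borel measure on that compact interval. Hence $\widetilde{U}$ is surjective, and thus unitary. Conversely, for any $h\in L^2(\mathbb{R},\ud\mu_g^{(A)})$ the bounded Borel functional calculus produces $h(A)g\in\cH$ with $\|h(A)g\|_{\cH}=\|h\|_{L^2}$; approximating $h$ by polynomials $p_n$ one sees $p_n(A)g\to h(A)g$ in $\cH$, so $h(A)g\in\overline{\mathcal{K}(A,g)}$ and $\widetilde{U}(h(A)g)=h$. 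Therefore $\widetilde{U}^{-1}$ is precisely the map $h\mapsto h(A)g$, as claimed. The only potentially non-routine step is the compact-support argument for $\mu_g^{(A)}$, but this is immediate once the defining bound \eqref{eq:boundedvectors} is rephrased spectrally; the rest is standard spectral-theorem bookkeeping.
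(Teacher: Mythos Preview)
Your argument is correct and in fact more elementary than the paper's. The crucial observation you make---that the bounded-vector estimate $\int_{\mathbb{R}}\lambda^{2n}\,\ud\mu_g^{(A)}\leqslant B_g^{2n}$ forces $\mathrm{supp}\,\mu_g^{(A)}\subset[-B_g,B_g]$---reduces the density of polynomials in $L^2(\mathbb{R},\ud\mu_g^{(A)})$ to the Weierstrass approximation theorem on a compact interval, after which the isometry extends to a unitary by routine bookkeeping.

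The paper takes a different route and never isolates the compact-support fact. Instead it works through the intermediate class $\mathscr{E}(\mathbb{R})$ of restrictions to $\mathbb{R}$ of entire functions: Lemma~\ref{lem:fL2fent} uses Carleman's theorem to show $\mathscr{E}(\mathbb{R})$ is dense in $L^2(\mathbb{R},\ud\mu)$ for any finite measure $\mu$, Lemma~\ref{lem:fentire_fL2} uses Cauchy estimates together with the bound $\|A^kg\|_{\cH}\leqslant B_g^k$ to show $\mathscr{E}(\mathbb{R})\subset L^2(\mathbb{R},\ud\mu_g^{(A)})$, and Lemma~\ref{lem:fAisKrylov} shows that for $f\in\mathscr{E}(\mathbb{R})$ the Taylor partial sums of $f$ applied to $A$ converge in $\cH$ to $f(A)g$, so $f(A)g\in\overline{\mathcal{K}(A,g)}$. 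What this buys is a set of auxiliary lemmas about entire functions of $A$ acting on bounded vectors, in the same spirit as the Gaussian-regularisation argument of Theorem~\ref{thm:krylov-core_analytic}; but for the isomorphism statement itself your compact-support-plus-Weierstrass route is shorter and avoids Carleman's theorem entirely.
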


In fact, Theorem \ref{thm:Krylov_isomorphism} (that we shall prove in a moment) allows one to re-obtain Krylov solvability for the inverse problem \eqref{eq:invLP} when $A$ is self-adjoint and injective.

\begin{cor}
 For a given Hilbert space $\cH$ let $A$ be an injective and self-adjoint operator on $\cH$ and let $g\in\mathrm{ran} A$ be a bounded vector for $A$. Then the unique solution $f\in\mathcal{D}(A)$ such that $Af=g$ is a Krylov solution.
\end{cor}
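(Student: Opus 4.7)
My plan is to exhibit $f$ directly as an element of $\overline{\mathcal{K}(A,g)}$ by invoking the spectral isomorphism established in Theorem \ref{thm:Krylov_isomorphism}. The idea is simply to take $h(\lambda):=1/\lambda$ and to prove that $h\in L^2(\mathbb{R},\ud\mu_g^{(A)})$, since then $f=h(A)g$ would automatically lie in $\overline{\mathcal{K}(A,g)}$ via the isomorphism.

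First I would observe that injectivity and self-adjointness of $A$ force the spectral projector $E_A(\{0\})=P_{\ker A}$ to vanish, so that $\mu_g^{(A)}(\{0\})=\|E_A(\{0\})g\|_{\cH}^2=0$, and therefore $h$ is well-defined $\mu_g^{(A)}$-almost everywhere. Next I would use the Borel functional calculus to identify $h(A)$ with the (possibly unbounded) self-adjoint inverse of $A$, whose natural domain is precisely $\mathrm{ran}\,A$. Since $g\in\mathrm{ran}\,A$ by assumption, this gives $g\in\mathcal{D}(h(A))$; by the spectral theorem this is exactly the statement
\[
 \int_{\mathbb{R}}\frac{1}{\lambda^2}\,\ud\mu_g^{(A)}(\lambda)\;=\;\|h(A)g\|_{\cH}^2\;=\;\|A^{-1}g\|_{\cH}^2\;=\;\|f\|_{\cH}^2\;<\;+\infty\,,
\]
i.e.\ $h\in L^2(\mathbb{R},\ud\mu_g^{(A)})$, and simultaneously identifies $h(A)g=A^{-1}g=f$.

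The final step would then be an immediate appeal to Theorem \ref{thm:Krylov_isomorphism}, applicable because $g$ is a bounded vector for $A$: the isomorphism $L^2(\mathbb{R},\ud\mu_g^{(A)})\ni h\mapsto h(A)g\in\overline{\mathcal{K}(A,g)}$ delivers $f=h(A)g\in\overline{\mathcal{K}(A,g)}$, while uniqueness of such $f$ is obvious from the injectivity of $A$. I do not anticipate any substantial obstacle here: all the analytical weight has already been absorbed into Theorem \ref{thm:Krylov_isomorphism}, and the only point that merits explicit care is reading off the membership $h\in L^2(\mathbb{R},\ud\mu_g^{(A)})$ as nothing more than the functional-calculus characterisation of the domain of $A^{-1}$.
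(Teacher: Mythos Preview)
Your proposal is correct and follows essentially the same route as the paper's own proof: define $h(\lambda)=\lambda^{-1}$, read off $h\in L^2(\mathbb{R},\ud\mu_g^{(A)})$ from $\|f\|_{\cH}^2=\int_{\mathbb{R}}\lambda^{-2}\,\ud\mu_g^{(A)}(\lambda)<\infty$, and then invoke Theorem~\ref{thm:Krylov_isomorphism} to conclude $f=h(A)g\in\overline{\mathcal{K}(A,g)}$. Your version simply spells out in more detail the well-definedness of $h$ (via $\mu_g^{(A)}(\{0\})=0$) and the identification $h(A)=A^{-1}$, points that the paper leaves implicit.
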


\begin{proof}
 Let $h$ be the measurable function defined by $h(\lambda):=\lambda^{-1}$ for $\lambda\in\mathbb{R}$. Since $\|f\|_{\cH}^2=\int_\mathbb{R}\lambda^{-2}\ud\mu_g^{(A)}(\lambda)$, then $h\in L^2(\mathbb{R},\ud\mu_g^{(A)})$. Therefore, owing to Theorem \ref{thm:Krylov_isomorphism}, $f=h(A)g\in\overline{\mathcal{K}(A,g)}$. 
\end{proof}

For the proof of Theorem \ref{thm:Krylov_isomorphism} it is convenient to single out the following facts.

 \begin{lem}\label{lem:fL2fent}
%   For a given Hilbert space $\cH$ let $A$ be a self-adjoint operator on $\cH$ and let $g\in\cH$ be a bounded vector for $A$. Then the space of $\mathbb{R}\to\mathbb{C}$ functions that are restrictions of $\mathbb{C}\to\mathbb{C}$ entire functions is dense in $L^2(\mathbb{R},\ud \mu_g^{(A)})$.
  Let $\mu$ be a positive finite measure on $\mathbb{R}$. Then the space of $\mathbb{R}\to\mathbb{C}$ functions that are restrictions of $\mathbb{C}\to\mathbb{C}$ entire functions and are square-integrable with respect to $\mu$ is dense in $L^2(\mathbb{R},\ud \mu)$.
 \end{lem}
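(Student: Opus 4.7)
The plan is to combine two standard density facts: the density of $C_c(\mathbb{R})$ in $L^2(\mathbb{R},\ud\mu)$, and the possibility of approximating any compactly supported continuous function uniformly by a Gaussian mollification that in fact extends to an entire function on $\mathbb{C}$. Since $\mu$ is a finite positive Borel measure on $\mathbb{R}$, it is regular, and hence $C_c(\mathbb{R})$ is dense in $L^2(\mathbb{R},\ud\mu)$. So it suffices to show that any $\varphi\in C_c(\mathbb{R})$ is the $L^2(\ud\mu)$-limit of (restrictions to $\mathbb{R}$ of) entire functions that are square-integrable with respect to $\mu$.

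Given $\varphi\in C_c(\mathbb{R})$ with support $K\subset\mathbb{R}$, and $\varepsilon>0$, I would consider the heat-kernel mollification
\[
 \varphi_\varepsilon(z)\;:=\;\frac{1}{\sqrt{4\pi\varepsilon}}\int_K\varphi(y)\,e^{-(z-y)^2/(4\varepsilon)}\,\ud y\,,\qquad z\in\mathbb{C}\,.
\]
For each fixed $y\in K$ the integrand $e^{-(z-y)^2/(4\varepsilon)}$ is entire in $z$, and since $K$ is compact the integrand is uniformly bounded on any compact subset $D\subset\mathbb{C}$ for $y\in K$. A standard application of Morera's theorem (or differentiation under the integral) then yields that $\varphi_\varepsilon:\mathbb{C}\to\mathbb{C}$ is entire.

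Next I would check the two properties needed. First, $\varphi_\varepsilon|_{\mathbb{R}}$ is bounded on $\mathbb{R}$: for $x\in\mathbb{R}$,
\[
 |\varphi_\varepsilon(x)|\;\leqslant\;\|\varphi\|_{L^\infty}\cdot\frac{1}{\sqrt{4\pi\varepsilon}}\int_{\mathbb{R}}e^{-(x-y)^2/(4\varepsilon)}\ud y\;=\;\|\varphi\|_{L^\infty}\,,
\]
so $\varphi_\varepsilon\in L^\infty(\mathbb{R})\subset L^2(\mathbb{R},\ud\mu)$ because $\mu$ is finite. Second, as a standard property of the Gaussian mollifier applied to a uniformly continuous function with compact support, $\varphi_\varepsilon\to\varphi$ uniformly on $\mathbb{R}$ as $\varepsilon\to 0^+$, whence
\[
 \|\varphi-\varphi_\varepsilon\|_{L^2(\mathbb{R},\ud\mu)}^2\;\leqslant\;\mu(\mathbb{R})\,\|\varphi-\varphi_\varepsilon\|_{L^\infty}^2\;\xrightarrow[]{\;\varepsilon\to 0^+\;}\;0\,.
\]
Combining the two density steps concludes the proof.

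The only mildly delicate point is verifying that the Gaussian convolution of a compactly supported function produces a genuine entire function (rather than merely a smooth real-analytic one), but this is immediate from the entireness of $z\mapsto e^{-(z-y)^2/(4\varepsilon)}$ together with the compact support of $\varphi$, which yields absolutely convergent integrals uniformly on compacts of $\mathbb{C}$. All remaining ingredients (Urysohn-type density of $C_c$ in $L^2(\ud\mu)$ for finite Borel $\mu$, and uniform convergence of Gaussian mollifiers) are classical.
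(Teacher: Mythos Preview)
Your proof is correct and takes a genuinely different route from the paper. The paper first approximates $f\in L^2(\mathbb{R},\ud\mu)$ by a continuous $L^2$-function $f_{\mathrm{c}}$ and then invokes Carleman's approximation theorem to produce an entire function $f_{\mathrm{e}}$ with $|f_{\mathrm{c}}(\lambda)-f_{\mathrm{e}}(\lambda)|\leqslant\mathcal{E}(\lambda)$ for an arbitrary error function $\mathcal{E}$, chosen so that $\|f_{\mathrm{c}}-f_{\mathrm{e}}\|_{L^2(\ud\mu)}\leqslant\varepsilon$. You instead reduce to $C_c(\mathbb{R})$ and then build the entire approximant explicitly as a Gaussian (heat-kernel) convolution, which is entire because the integrand is entire in $z$ and the integration is over a compact set. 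Your argument is more elementary and fully self-contained, avoiding the comparatively deep Carleman theorem; the paper's route is shorter to state once Carleman is quoted, and in principle yields approximation with a prescribed pointwise error profile, though that extra strength is not used here. Both approaches ultimately rely on the finiteness of $\mu$ to pass from uniform (or pointwise-controlled) approximation to $L^2(\ud\mu)$-approximation.
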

 
 \begin{proof}
  Let $f\in L^2(\mathbb{R},\ud\mu)$. For every $\varepsilon>0$ then there exists a continuous and $L^2(\mathbb{R},\ud \mu)$-function $f_{\mathrm{c}}$ such that $\|f-f_{\mathrm{c}}\|_{L^2(\mathbb{R},\ud \mu)}\leqslant\varepsilon$ (see, e.g., \cite[Chapt.~3, Theorem 3.14]{Rudin-realcomplexanalysis}). In turn, by Carleman's theorem (see, e.g., \cite[Chapt.~4, \S 3, Theorem 1]{Gaier_ComplexApproximation}), there exists an entire function $f_{\mathrm{e}}$ such that $|f_{\mathrm{c}}(\lambda)-f_{\mathrm{e}}(\lambda)|\leqslant \mathcal{E}(\lambda)$ $\forall\lambda\in\mathbb{R}$, where $\mathcal{E}$ is an arbitrary error function. $\mathcal{E}$ can be therefore chosen so as $\|f_{\mathrm{c}}-f_{\mathrm{e}}\|_{L^2(\mathbb{R},\ud \mu)}\leqslant\varepsilon$. This shows that $f_{\mathrm{e}}\in L^2(\mathbb{R},\ud \mu)$ and $\|f-f_{\mathrm{e}}\|_{L^2(\mathbb{R},\ud \mu)}\leqslant 2\varepsilon$. As $\varepsilon$ is arbitrary, this completes the proof.  
 \end{proof}

\begin{lem}\label{lem:fentire_fL2}
 For a given Hilbert space $\cH$ let $A$ be a self-adjoint operator on $\cH$ and let $g\in\cH$ be a bounded vector for $A$. If $f:\mathbb{C}\to\mathbb{C}$ is an entire function, then its restriction to the real line belongs to $L^2(\mathbb{R},\ud\mu_g^{(A)})$ and $g\in\mathcal{D}(f(A))$.
\end{lem}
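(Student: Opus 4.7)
The plan is to leverage the fact that the bounded-vector condition \eqref{eq:boundedvectors} forces the scalar spectral measure $\mu_g^{(A)}$ to be compactly supported, specifically inside $[-B_g,B_g]$. Once this support localisation is established, both conclusions will follow cheaply from the continuity of $f$ and the finiteness of $\mu_g^{(A)}$ (whose total mass equals $\|g\|_{\cH}^2$).

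First I would establish the support localisation. By the spectral theorem, $\|A^n g\|_{\cH}^2=\int_{\mathbb{R}}\lambda^{2n}\,\ud\mu_g^{(A)}(\lambda)$, so the hypothesis \eqref{eq:boundedvectors} gives $\int_{\mathbb{R}}\lambda^{2n}\,\ud\mu_g^{(A)}(\lambda)\leqslant B_g^{2n}$ for every $n\in\mathbb{N}_0$. If, by contradiction, one had $\mu_g^{(A)}(\{|\lambda|\geqslant B_g+\varepsilon\})=:m>0$ for some $\varepsilon>0$, then
\[
\int_{\mathbb{R}}\lambda^{2n}\,\ud\mu_g^{(A)}(\lambda)\;\geqslant\;\int_{\{|\lambda|\geqslant B_g+\varepsilon\}}\lambda^{2n}\,\ud\mu_g^{(A)}(\lambda)\;\geqslant\;m\,(B_g+\varepsilon)^{2n}\,,
\]
which eventually exceeds $B_g^{2n}$, a contradiction. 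Hence $\mathrm{supp}(\mu_g^{(A)})\subset[-B_g,B_g]$.

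Once the support is compact, the rest is routine. Since $f$ is entire, $f|_{\mathbb{R}}$ is continuous and therefore bounded on $[-B_g,B_g]$ by some $M_f>0$, whence
\[
 \int_{\mathbb{R}}|f(\lambda)|^2\,\ud\mu_g^{(A)}(\lambda)\;=\;\int_{[-B_g,B_g]}|f(\lambda)|^2\,\ud\mu_g^{(A)}(\lambda)\;\leqslant\;M_f^{\,2}\,\|g\|_{\cH}^2\;<\;+\infty\,,
\]
which shows that $f|_{\mathbb{R}}\in L^2(\mathbb{R},\ud\mu_g^{(A)})$. Finally, the standard spectral-calculus characterisation of the domain of a measurable function of a self-adjoint operator (see, e.g., \cite[Sect.~5.3]{schmu_unbdd_sa}) states that $g\in\mathcal{D}(f(A))$ if and only if $\int_{\mathbb{R}}|f(\lambda)|^2\,\ud\mu_g^{(A)}(\lambda)<+\infty$, which we have just verified.

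There is no serious obstacle here: the only step requiring genuine thought is the support-localisation argument, and the rest is a direct application of the spectral theorem together with the elementary observation that a continuous function on a compact interval is bounded. The main conceptual point worth recording is precisely that \textbf{bounded vectors of a self-adjoint operator have compactly supported scalar spectral measure}, a fact that will presumably be used implicitly in the proof of Theorem~\ref{thm:Krylov_isomorphism} as well (since it allows the approximation of $L^2(\mathbb{R},\ud\mu_g^{(A)})$-functions by polynomials on the compact support to play the role expected in the Krylov setting).
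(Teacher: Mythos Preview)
Your proof is correct and takes a genuinely different route from the paper's. The paper never isolates the support-localisation fact; instead it works directly with the Taylor expansion of $f$, writing $\mathbf{1}_{[-N,N]}(A)f(A)g$ as a norm-convergent series, bounding each term via $\|A^k g\|_{\cH}\leqslant B_g^k$ together with Cauchy's estimate $|f^{(k)}(0)/k!|\leqslant M_R/R^k$ for $R>B_g$, and then letting $N\to\infty$. Your argument is shorter and more conceptual: once $\mathrm{supp}\,\mu_g^{(A)}\subset[-B_g,B_g]$ is established, only the continuity of $f$ (not its entirety) is needed, and the conclusion is immediate. The paper's approach, by contrast, produces the explicit quantitative bound $\|f(A)g\|_{\cH}\leqslant M_R(1-B_g/R)^{-1}$ and dovetails with the computation in the subsequent Lemma~\ref{lem:fAisKrylov}, where the Taylor remainder is estimated in the same fashion; your compact-support observation would streamline that lemma too (uniform convergence of the Taylor series on $[-B_g,B_g]$ suffices), as you correctly anticipate in your closing remark.
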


\begin{proof}
 As $f$ is entire, in particular $f(\lambda)=\sum_{k=0}^\infty\frac{f^{(k)}(0)}{k!}\lambda^k$ for every $\lambda\in\mathbb{R}$, where the series converges point-wise for every $\lambda$ and uniformly on compact subsets of $\mathbb{R}$. Thus, by functional calculus (see, e.g., \cite[Prop.~4.12(v)]{schmu_unbdd_sa}), for every $N\in\mathbb{N}$
 \[
  \mathbf{1}_{[-N,N]}(A)f(A)g\;=\;\sum_{k=0}^\infty\frac{f^{(k)}(0)}{k!}(A\mathbf{1}_{[-N,N]}(A))^k g\,,
 \]
 where $\mathbf{1}_\Omega$ denotes the characteristic function of $\Omega$ and the series in the r.h.s.~above converges in the norm of $\cH$. Again by standard properties of the functional calculus one then has
 \[
  \begin{split}
   \Big(\int_{-N}^N|f(\lambda)|^2\,\ud\mu_g^{(A)}(\lambda)\Big)^{\!1/2}\;&=\;\big\|\mathbf{1}_{[-N,N]}(A)f(A)g\big\|_{\cH} \\
   &=\;\Big\| \sum_{k=0}^\infty\frac{f^{(k)}(0)}{k!}\,\mathbf{1}_{[-N,N]}(A)\,A^k g \Big\|_{\cH} \\
   &\leqslant\;\sum_{k=0}^\infty \Big|\frac{f^{(k)}(0)}{k!} \Big|\,\big\|\mathbf{1}_{[-N,N]}(A)\,A^k g\big\|_{\cH} \\
   &\leqslant\;\sum_{k=0}^\infty \Big|\frac{f^{(k)}(0)}{k!} \Big|\,\big\|A^k g\big\|_{\cH} \\
   &\leqslant\;M_R\sum_{k=0}^\infty \Big(\frac{B_g}{R}\Big)^k\;=\;M_R\big(1-B_g/R)^{-1}\,,
  \end{split}
 \]
 where in the last inequality we used the bound $\|A^k g\|_{\cH}\leqslant B_g^k$ for some $B_g>0$ and we applied Cauchy's estimate $|\frac{f^{(k)}(0)}{k!}|\leqslant M_R/R^k$ for $R>B_g$ and $M_R:=\max_{|z|=R}|f(x)|$. Taking the limit $N\to\infty$ yields the thesis.
\end{proof}

\begin{lem}\label{lem:fAisKrylov}
 For a given Hilbert space $\cH$ let $A$ be a self-adjoint operator on $\cH$ and let $g\in\cH$ be a bounded vector for $A$. Let $f:\mathbb{C}\to\mathbb{C}$ be an entire function and let $f(z)=\sum_{k=0}^\infty\frac{f^{(k)}(0)}{k!} z^k$, $z\in\mathbb{C}$, be its Taylor expansion. Then, 
 \[
  \lim_{n\to\infty}\Big\|f(A)g-\sum_{k=0}^n\frac{f^{(k)}(0)}{k!}\,A^k g\Big\|_{\cH}\;=\;0
 \]
 and therefore $f(A)g\in\overline{\mathcal{K}(A,g)}$.
\end{lem}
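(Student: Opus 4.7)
The plan is to use the spectral theorem to convert norm convergence in $\cH$ into $L^2$-convergence against the scalar spectral measure $\mu_g^{(A)}$. Set $S_n(\lambda) := \sum_{k=0}^n \frac{f^{(k)}(0)}{k!}\lambda^k$, so that $S_n(A)g = \sum_{k=0}^n \frac{f^{(k)}(0)}{k!} A^k g \in \mathcal{K}(A,g)$ by construction. Lemma \ref{lem:fentire_fL2} already guarantees $g \in \mathcal{D}(f(A))$ and $f|_{\mathbb{R}} \in L^2(\mathbb{R},\ud\mu_g^{(A)})$, so functional calculus yields
\[
\|f(A)g - S_n(A)g\|_{\cH}^2 \;=\; \int_{\mathbb{R}} |f(\lambda) - S_n(\lambda)|^2\,\ud\mu_g^{(A)}(\lambda)\,,
\]
and the whole proof reduces to showing the right-hand side tends to zero.

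The key structural input I would extract from the bounded-vector assumption is that $\mathrm{supp}\,\mu_g^{(A)} \subseteq [-B_g, B_g]$. This follows from the moment identity $\int_{\mathbb{R}} \lambda^{2n}\,\ud\mu_g^{(A)}(\lambda) = \|A^n g\|_{\cH}^2 \leqslant B_g^{2n}$: if some positive mass $c>0$ of $\mu_g^{(A)}$ lay outside $[-B_g-\varepsilon, B_g+\varepsilon]$ for some $\varepsilon > 0$, the integral would be bounded below by $c(B_g+\varepsilon)^{2n}$, contradicting the $B_g^{2n}$ upper bound for large $n$. With compact support secured, the Taylor series of the entire function $f$ converges uniformly on $[-B_g,B_g]$, hence $S_n \to f$ uniformly on $\mathrm{supp}\,\mu_g^{(A)}$; combined with $\mu_g^{(A)}(\mathbb{R}) = \|g\|_{\cH}^2 < +\infty$, bounded convergence delivers the vanishing of the integral, whence $S_n(A)g \xrightarrow{\|\,\|_{\cH}} f(A)g$ and consequently $f(A)g \in \overline{\mathcal{K}(A,g)}$.

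I expect the identification of $\mathrm{supp}\,\mu_g^{(A)}$ as a compact set to be the only substantive step; everything else is a routine functional-calculus/dominated-convergence packaging. An alternative route, which avoids explicitly locating the support, would be to prove directly that the series $\sum_{k=0}^\infty \frac{f^{(k)}(0)}{k!} A^k g$ is absolutely convergent in $\cH$ by combining $\|A^k g\|_{\cH}\leqslant B_g^k$ with Cauchy's estimate $|f^{(k)}(0)/k!| \leqslant M_R R^{-k}$ for some $R > B_g$ (yielding geometric decay of the general term), and then to verify by functional calculus applied to partial sums that the $\cH$-limit coincides with $f(A)g$. Either strategy is elementary given the bounded-vector technology already deployed in the proof of Theorem \ref{thm:krylov-core_analytic}.
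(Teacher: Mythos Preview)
Your proposal is correct. The alternative route you sketch at the end --- absolute convergence of $\sum_k \frac{f^{(k)}(0)}{k!}A^k g$ via $\|A^k g\|_{\cH}\leqslant B_g^k$ combined with Cauchy's estimate $|f^{(k)}(0)/k!|\leqslant M_R R^{-k}$ for $R>B_g$ --- is precisely the argument the paper gives: it bounds the tail $\big\|\sum_{k=n+1}^\infty \frac{f^{(k)}(0)}{k!}A^k g\big\|_{\cH}$ by the geometric remainder $M_R(B_g/R)^{n+1}(1-B_g/R)^{-1}$ and sends $n\to\infty$.

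Your primary route, however, is genuinely different and arguably more illuminating. You extract the structural fact that boundedness of $g$ forces $\mathrm{supp}\,\mu_g^{(A)}\subset[-B_g,B_g]$, and then the result is immediate from uniform convergence of the Taylor series on that compact interval together with finiteness of the measure. The paper never isolates this support statement; it works instead through termwise norm estimates that implicitly encode the same information. Your approach has the advantage of revealing \emph{why} bounded vectors behave so well --- they are exactly the vectors with compactly supported spectral measure --- and once that is said, the lemma becomes a triviality. The paper's approach has the minor advantage of being entirely self-contained within the series manipulations already set up in the preceding lemma, without needing to argue about supports. Both are short and elementary.
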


\begin{proof}
 Both $f$ and $\mathbb{C}\ni z\mapsto \sum_{k=0}^n\frac{f^{(k)}(0)}{k!}\,z^k$ are entire functions for each $n\in\mathbb{N}$, and so is their difference. Then, reasoning as in the proof of Lemma \ref{lem:fentire_fL2},
  \[
  \begin{split}
   \Big\|f(A)g-\sum_{k=0}^n\frac{f^{(k)}(0)}{k!}\,A^k g\Big\|_{\cH}\;&=\;\Big\| \sum_{k=n+1}^\infty\frac{f^{(k)}(0)}{k!}\,A^k g \Big\|_{\cH} \\
   &\leqslant\;\sum_{k=n+1}^\infty \Big|\frac{f^{(k)}(0)}{k!} \Big|\,\big\|A^k g\big\|_{\cH} \\
   &\leqslant\;M_R\sum_{k=n+1}^\infty \Big(\frac{B_g}{R}\Big)^k \\
   &=\;M_R(B_g/R)^{n+1}\big(1-B_g/R\big)^{-1}\;\xrightarrow[]{n\to\infty}\;0\,,
  \end{split}
 \]
 which completes the proof.
\end{proof}

Let us finally prove Theorem \ref{thm:Krylov_isomorphism}.

\begin{proof}[Proof of Theorem \ref{thm:Krylov_isomorphism}]
 Let us denote by $\mathscr{E}(\mathbb{R})$ the space of $\mathbb{R}\to\mathbb{C}$ functions that are restrictions of $\mathbb{C}\to\mathbb{C}$ entire functions. Owing to Lemma \ref{lem:fentire_fL2}, $\mathscr{E}(\mathbb{R})\subset L^2(\mathbb{R},\ud \mu_g^{(A)})$, and owing to Lemma \ref{lem:fL2fent}, $\mathscr{E}(\mathbb{R})$ is actually dense in $L^2(\mathbb{R},\ud \mu_g^{(A)})$. Moreover, for each $h\in\mathscr{E}(\mathbb{R})$, $h(A)g\in\overline{\mathcal{K}(A,g)}$, as found in Lemma \ref{lem:fAisKrylov}. As $\|h(A)g\|_{\cH}=\|h\|_{L^2(\mathbb{R},\ud \mu_g^{(A)})}$, the map $h\mapsto h(A)g$ is an isometry from $\mathscr{E}(\mathbb{R})$ to $\overline{\mathcal{K}(A,g)}$, which extends canonically to a $L^2(\mathbb{R},\ud \mu_g^{(A)})\to\overline{\mathcal{K}(A,g)}$ isometry. In fact, this map is also surjective and therefore is an isomorphism. Indeed, for a generic $w\in\overline{\mathcal{K}(A,g)}$ there exists a sequence $(p_n)_{n\in\mathbb{N}}$ of polynomials such that $p_n(A)g\to w$ in $\cH$ as $n\to\infty$, and moreover $(p_n)_{n\in\mathbb{N}}$ is a Cauchy sequence in $L^2(\mathbb{R},\ud \mu_g^{(A)})$ because $\|p_n-p_m\|_{L^2(\mathbb{R},\ud \mu_g^{(A)})}=\|p_n(A)g-p_m(A)g\|_{\cH}$ for any $n,m\in\mathbb{N}$. Therefore $p_n\to h$ in $L^2(\mathbb{R},\ud \mu_g^{(A)})$ for some $h$ and consequently $w=\lim_{n\to\infty}p_n(A)g=h(A)g$. 
\end{proof}

\begin{rem}
 The reasoning of the proof of Theorem \ref{thm:Krylov_isomorphism} reveals that for \emph{generic} $g\in C^\infty(A)$, not necessarily bounded for $A$, the map $h\mapsto h(A)g$ is an isomorphism
 \begin{equation}
  \overline{\{\,\textrm{polynomials }\mathbb{R}\to\mathbb{C}\,\}}^{\|\,\|_{L^2(\mathbb{R},\ud \mu_g^{(A)})}}\cong\;\overline{\mathcal{K}(A,g)}\,.
 \end{equation}
 The assumption of $A$-boundedness for $g$ was used to show, concerning the l.h.s.~above, that polynomials are dense in $L^2(\mathbb{R},\ud \mu_g^{(A)})$.
 \end{rem}

% ------------------------------------------------------------------------

\section*{Acknowledgements}
A.~M.~gratefully acknowledges the support of the Alexander von Humboldt foundation.

\def\cprime{$'$}

% ------------------------------------------------------------------------

\begin{thebibliography}{10}

\bibitem{ABcpde}
{\sc N.~Antoni{\'c} and K.~s. Burazin}, {\em {Intrinsic boundary conditions for
  {F}riedrichs systems}}, Comm. Partial Differential Equations, 35 (2010),
  pp.~1690--1715.

\bibitem{ABCE}
{\sc N.~Antoni{\'c}, K.~s. Burazin, I.~Crnjac, and M.~Erceg}, {\em {Complex
  {F}riedrichs systems and applications}}, J. Math. Phys., 58 (2017),
  pp.~101508, 22.

\bibitem{antonic_erceg_michelangeli_jde2017}
{\sc N.~Antoni{\'c}, M.~Erceg, and A.~Michelangeli}, {\em {Friedrichs systems
  in a {H}ilbert space framework: {S}olvability and multiplicity}}, J.
  Differential Equations, 263 (2017), pp.~8264--8294.

\bibitem{Brakhage-1987}
{\sc H.~Brakhage}, {\em {On ill-posed problems and the method of conjugate
  gradients}}, in {Inverse and Ill-Posed Problems}, H.~W. Engl and C.~Groetsch,
  eds., Academic Press, 1987, pp.~165--175.

\bibitem{CM-Nemi-unbdd-2019}
{\sc N.~A. Caruso and A.~Michelangeli}, {\em {Convergence of the conjugate
  gradient method with unbounded operators}}, arXiv:1908.10110 (2019).

\bibitem{CMN-2018_Krylov-solvability-bdd}
{\sc N.~A. Caruso, A.~Michelangeli, and P.~Novati}, {\em {On Krylov solutions
  to infinite-dimensional inverse linear problems}}, Calcolo, 56 (2019), p.~32.

\bibitem{Cipra-SIAM-News}
{\sc B.~A. Cipra}, {\em {The best of the 20th century: Editors name top 10
  algorithms}}, SIAM News, 33 (2005).

\bibitem{Daniel-1967}
{\sc J.~W. Daniel}, {\em {The conjugate gradient method for linear and
  nonlinear operator equations}}, SIAM J. Numer. Anal., 4 (1967), pp.~10--26.

\bibitem{Daniel-1970}
\leavevmode\vrule height 2pt depth -1.6pt width 23pt, {\em {A correction
  concerning the convergence rate for the conjugate gradient method}}, SIAM
  Journal on Numerical Analysis, 7 (1970), pp.~10--26.

\bibitem{Dongarra-Sullivan-Best10-2000}
{\sc J.~Dongarra and F.~Sullivan}, {\em {The Top 10 Algorithms (Guest editors'
  intruduction)}}, Comput. Sci. Eng., 2 (2000), pp.~22--23.

\bibitem{Engl-Hanke-Neubauer-1996}
{\sc H.~W. Engl, M.~Hanke, and A.~Neubauer}, {\em {Regularization of inverse
  problems}}, vol.~375 of {Mathematics and its Applications}, Kluwer Academic
  Publishers Group, Dordrecht, 1996.

\bibitem{EGC}
{\sc A.~Ern, J.-L. Guermond, and G.~Caplain}, {\em {An intrinsic criterion for
  the bijectivity of {H}ilbert operators related to {F}riedrichs' systems}},
  Comm. Partial Differential Equations, 32 (2007), pp.~317--341.

\bibitem{Gaier_ComplexApproximation}
{\sc D.~Gaier}, {\em {Lectures on complex approximation}}, Birkh{\"a}user
  Boston, Inc., Boston, MA, 1987.
\newblock Translated from the German by Renate McLaughlin.

\bibitem{Hanke-ConjGrad-1995}
{\sc M.~Hanke}, {\em {Conjugate gradient type methods for ill-posed problems}},
  vol.~327 of {Pitman Research Notes in Mathematics Series}, Longman Scientific
  \& Technical, Harlow, 1995.

\bibitem{Herzog-Ekkehard-2015}
{\sc R.~Herzog and E.~Sachs}, {\em {Superlinear convergence of {K}rylov
  subspace methods for self-adjoint problems in {H}ilbert space}}, SIAM J.
  Numer. Anal., 53 (2015), pp.~1304--1324.

\bibitem{Hestenes-Stiefer-ConjGrad-1952}
{\sc M.~R. Hestenes and E.~Stiefel}, {\em {Methods of conjugate gradients for
  solving linear systems}}, J. Research Nat. Bur. Standards, 49 (1952),
  pp.~409--436 (1953).

\bibitem{Kammerer-Nashed-1972}
{\sc W.~J. Kammerer and M.~Z. Nashed}, {\em {On the convergence of the
  conjugate gradient method for singular linear operator equations}}, SIAM J.
  Numer. Anal., 9 (1972), pp.~165--181.

\bibitem{Karush-1952}
{\sc W.~Karush}, {\em {Convergence of a method of solving linear problems}},
  Proc. Amer. Math. Soc., 3 (1952), pp.~839--851.

\bibitem{Liesen-Strakos-2003}
{\sc J.~Liesen and Z.~e. {Strako\v s}}, {\em {Krylov subspace methods}},
  {Numerical Mathematics and Scientific Computation}, Oxford University Press,
  Oxford, 2013.
\newblock Principles and analysis.

\bibitem{Louis-1987}
{\sc A.~K. Louis}, {\em {Convergence of the conjugate gradient method for
  compact operators}}, in {Inverse and Ill-Posed Problems}, H.~W. Engl and
  C.~Groetsch, eds., Academic Press, 1987, pp.~177--183.

\bibitem{Nemirovskiy-Polyak-1985}
{\sc A.~S. Nemirovskiy and B.~T. Polyak}, {\em {Iterative methods for solving
  linear ill-posed problems under precise information. {I}}}, Izv. Akad. Nauk
  SSSR Tekhn. Kibernet.,  (1984), pp.~13--25, 203.

\bibitem{Nemirovskiy-Polyak-1985-II}
\leavevmode\vrule height 2pt depth -1.6pt width 23pt, {\em {Iterative methods
  for solving linear ill-posed problems under precise information. {II}}},
  Engineering Cybernetics, 22 (1984), pp.~50--57.

\bibitem{Nevanlinna-1993-converg-iterat-book}
{\sc O.~Nevanlinna}, {\em {Convergence of iterations for linear equations}},
  {Lectures in Mathematics ETH Z{\"u}rich}, Birkh{\"a}user Verlag, Basel, 1993.

\bibitem{Rudin-realcomplexanalysis}
{\sc W.~Rudin}, {\em {Real and complex analysis}}, McGraw-Hill Book Co., New
  York, third~ed., 1987.

\bibitem{Saad-1981}
{\sc Y.~Saad}, {\em {Krylov subspace methods for solving large unsymmetric
  linear systems}}, Math. Comp., 37 (1981), pp.~105--126.

\bibitem{Saad-2003_IterativeMethods}
{\sc Y.~Saad}, {\em {Iterative methods for sparse linear systems}}, Society for
  Industrial and Applied Mathematics, Philadelphia, PA, second~ed., 2003.

\bibitem{schmu_unbdd_sa}
{\sc K.~Schm{\"u}dgen}, {\em {Unbounded self-adjoint operators on {H}ilbert
  space}}, vol.~265 of {Graduate Texts in Mathematics}, Springer, Dordrecht,
  2012.

\bibitem{Simoncini-Szyld-2007}
{\sc V.~Simoncini and D.~B. Szyld}, {\em {Recent computational developments in
  Krylov subspace methods for linear systems}}, Numerical Linear Algebra with
  Applications, 14 (2007), pp.~1--59.

\bibitem{Winther-1980}
{\sc R.~Winther}, {\em {Some superlinear convergence results for the conjugate
  gradient method}}, SIAM J. Numer. Anal., 17 (1980), pp.~14--17.

\bibitem{Yadav2005_InvSubsp}
{\sc B.~S. Yadav}, {\em {The present state and heritages of the invariant
  subspace problem}}, Milan J. Math., 73 (2005), pp.~289--316.

\end{thebibliography}
\end{document}